\definecolor{RED}{rgb}{1,0,0}\definecolor{BLUE}{rgb}{0,0,1}
\newcommand{\E}{\mathbb{E}}
\newcommand{\N}{\mathbb{N}}
\newcommand{\R}{\mathbb{R}}
\newcommand{\cF}{\mathcal{F}}
\newcommand{\cH}{\mathcal{H}}
\newcommand{\cL}{\mathcal{L}}
\newcommand{\cN}{\mathcal{N}}
\newcommand{\cS}{\mathcal{S}}
\newcommand{\tE}{\tilde{E}}
\newcommand{\tF}{\tilde{F}}
\newcommand{\tY}{\tilde{Y}}
\newcommand{\hE}{\hat{E}}
\def\diam{\operatorname{diam}}
\newcommand{\Lip}{\mbox{\rm Lip}}
\newcommand{\ind}{\mbox{\rm Ind}}
\renewcommand{\div}{\mbox{\rm div}}
\newcommand{\bcup}{\bigcup}
\newcommand{\bcap}{\bigcap}
\newcommand{\epty}{\emptyset}
\newcommand{\sm}{\setminus}
\newcommand{\lgl}{\langle}
\newcommand{\rgl}{\rangle}
\newcommand{\pa}{\partial}
\newcommand{\con}{\subset}
\newcommand{\res}{
	\,\raisebox{-.127ex}{\reflectbox{\rotatebox[origin=br]{-90}{$\lnot$}}}\,
} 
\newcommand{\beq}{\begin{equation}}
\newcommand{\eeq}{\end{equation}}
\newcommand{\beqs}{\begin{equation*}}
\newcommand{\eeqs}{\end{equation*}}
\newcommand{\ep}{\varepsilon} 
\newcommand{\ga}{\gamma}
\newcommand{\al}{\alpha}
\newcommand{\de}{\delta}
\newcommand{\ro}{\rho}
\newcommand{\si}{\sigma}
\newcommand{\te}{\theta}
\newcommand{\De}{\Delta}
\newcommand{\Ga}{\Gamma}
\theoremstyle{plain}
\newtheorem{thm}{Theorem}[section] 
\theoremstyle{plain}
\theoremstyle{plain}
\newtheorem{prop}[thm]{Proposition}
\theoremstyle{plain}
\newtheorem{lemma}[thm]{Lemma}
\theoremstyle{plain}
\theoremstyle{definition}
\newtheorem{defn}[thm]{Definition} 
\theoremstyle{definition}
\newtheorem{remark}[thm]{Remark}
\theoremstyle{definition}
\theoremstyle{definition}
\newtheorem{thmintro}[thm]{Theorem}
\newtheorem*{rep@theorem}{\rep@title}
\newcommand{\newreptheorem}[2]{%
	\newenvironment{rep#1}[1]{%
		\def\rep@title{#2 \ref{##1}}%
		\begin{rep@theorem}}%
		{\end{rep@theorem}}}
\title[Connected perimeter of planar sets]{Connected perimeter of planar sets}
\author{Fran\c{c}ois Dayrens}
\address{Univ Lyon, Universit\'e Claude Bernard Lyon 1, CNRS UMR 5208, Institut Camille Jordan, 43 Bd du 11 novembre 1918, 69622 Villeurbanne Cedex, France}
\email{dayrens@math.univ-lyon1.fr}
\author{Simon Masnou}
\address{Univ Lyon, Universit\'e Claude Bernard Lyon 1, CNRS UMR 5208, Institut Camille Jordan, 43 Bd du 11 novembre 1918, 69622 Villeurbanne Cedex, France}
\email{masnou@math.univ-lyon1.fr}
\author{Matteo Novaga}
\address{Dipartimento di Matematica, Universit\`a di Pisa, Largo Bruno Pontecorvo 5, 56127 Pisa, Italy.}
\email{matteo.novaga@unipi.it}
\author{Marco Pozzetta}
\address{Dipartimento di Matematica, Universit\`{a} di Pisa, Largo Bruno Pontecorvo 5, 56127 Pisa, Italy.}
\email{pozzetta@mail.dm.unipi.it}
\begin{document}

\begin{abstract}
We introduce a notion of {\it connected} perimeter for planar sets defined as the lower semicontinuous envelope of perimeters of approximating sets which are measure-theoretically connected. A companion notion of {\it simply connected} perimeter is also studied. 
We prove a representation formula which links the connected perimeter, the classical perimeter, and the length of suitable Steiner trees. We also discuss the application of this notion to the existence of solutions to a nonlocal minimization problem with connectedness constraint.
\end{abstract}

\maketitle

\vspace{-10mm}
\tableofcontents	
\vspace{-0.5cm}
\noindent\textbf{MSC (2010):} 49J45, 49Q15, 28A75, 49Q20, 26A45.

\vspace{0.5cm}


\section{Introduction}

Various problems in biology, physics, engineering, image processing, or computer graphics can be modeled as shape optimization problems whose solutions are connected sets which minimize a specific geometric energy. Typical examples are three-dimensional red blood cells whose boundaries minimize the second-order Helfrich energy~\cite{Seifert}, two-dimensional soap films which are connected solutions to the Plateau problem, conducting liquid drops which minimize a non-local perimeter~\cite{GoNoRu15}, or one-dimensional compact connected sets which have minimal length and contain a given compact set, i.e., solutions to the so-called Steiner problem~\cite{GiPo68,PaSt13}. 

This paper is devoted to the case where the sets are planar and the geometric energy is a suitable relaxation of the perimeter of a set. A convenient notion of perimeter in a variational context is the well-known Caccioppoli's perimeter (see for instance~\cite{AmFuPa00}), which can be defined for sets whose characteristic function is only locally integrable,
and it is finite on the so-called finite perimeter sets.
The classical topological notion of connectedness is not appropriate in this generality because adding or removing Lebesgue-negligible sets may change the connectedness of a set without changing its perimeter. To circumvent this problem, a notion of measure-theoretic connectedness (and simple connectedness) for sets of finite perimeter has been introduced in~\cite{AmCaMaMo01}. The purpose of this paper is to study a {\it $L^1$-relaxed connected perimeter}, i.e., a suitable notion of perimeter for planar sets which are $L^1$-limits of measure-theoretically connected sets.  As will be clear later, there is a strong connection between this notion of connected perimeter and the Steiner problem.

To the best of our knowledge, this is the first contribution proposing a theoretical characterization of {connected perimeter}. However,
motivated by the numerical applications, there have been several contributions on the approximation of such perimeter, or on the approximation of other (sometimes higher-order) related energies, see for instance \cite{DoLeWo17,DoMuRo,DoNoWiWo18,BoLeSa15,BoLeMi18}.

\textcolor{white}{text}

We will constantly use in this work the notion of set of finite perimeter and its main properties, for which we refer to \cite{AmFuPa00}. In Subsection \ref{sub1} we recall the definitions and the results we will need about the concepts of indecomposable and simple set; roughly speaking these are the analogues in the context of sets of finite perimeter of the notions of connected and simply connected set.
Once these definitions are stated, we can introduce the following notion of perimeter. 
If $E\con\R^2$ is measurable, we set
\beq
	P_C(E) = \begin{cases}
					P(E) & \text{ if $E$ is indecomposable,}\\
					+\infty & \text{ otherwise,}
	\end{cases}	
\eeq
and
\beq
P_S(E) = \begin{cases}
	P(E) & \text{ if $E$ is simple,}\\
	+\infty & \text{ otherwise.}
\end{cases}	
\eeq
We deduce by relaxation the {\it connected perimeter} of a set $E$:
\beq 
	\overline{P_C}(E) = \inf\left\{ \liminf_{n \to +\infty} P_C(E_n) \ : \ E_n \to E \text{ in } L^1  \right\},
\eeq
and its {\it simply connected perimeter}:
\beq 
	\overline{P_S}(E) = \inf\left\{ \liminf_{n \to +\infty} P_S(E_n) \ : \ E_n \to E \text{ in } L^1  \right\}.
\eeq
where $E_n \to E \text{ in } L^1$ means the convergence in $L^1$ of the associated characteristic functions.

By the lower semi-continuity of Caccioppoli's perimeter, we obviously have that $\overline{P_C}(E)=P(E)$ if $E$ is indecomposable, and $\overline{P_S}(E)=P_S(E)$ if $E$ is simple.

\medskip
The analog of $P_C$ and $P_E$ for smooth sets in the classical framework of connectedness are defined as follows: if $E\con\R^2$ is measurable, we set
\beq
P^r_C(E) = \begin{cases}
	P(E) & \text{ if $E$ is smooth and connected,}\\
	+\infty & \text{ otherwise,}
\end{cases}	
\eeq
and
\beq
P^r_S(E) = \begin{cases}
	P(E) & \text{ if $E$ is smooth and simply connected,}\\
	+\infty & \text{ otherwise.}
\end{cases}	
\eeq
The associated $L^1$-relaxed functionals are denoted as $\overline{P_C^r}$ and $\overline{P_S^r}$, respectively. The first result we will prove is the following identification theorem:

\begin{thmintro} \label{thmA}
	Let $E\con\R^2$ be an essentially bounded set with finite perimeter, i.e., there exists some $\cL^2$-negligible set $A$ such that $E\setminus A$ is bounded. Then\\
	i) if $E$ is simple, there exists a sequence $E_n$ of smooth simply connected sets such that $E_n\to E$ and $P(E_n)\to P(E)$,\\
	ii) if $E$ is indecomposable, there exists a sequence $E_n$ of smooth connected sets such that $E_n\to E$ and $P(E_n)\to P(E)$.\\
	In particular it holds that
	\beq
	\overline{P_C^r}(E) = \overline{P_C}(E), \qquad  \qquad \overline{P_S^r}(E) = \overline{P_S}(E) .
	\eeq
\end{thmintro}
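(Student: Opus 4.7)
My plan is to reduce both statements to a single smoothing lemma for rectifiable Jordan curves, exploiting the Jordan-type decomposition of the essential boundary of a planar set of finite perimeter recalled in Subsection~\ref{sub1}. Write $\partial^* E$, up to $\mathcal{H}^1$-null sets, as a countable pairwise disjoint union of rectifiable Jordan curves $\{J_i\}_{i\in I}$ with $\sum_i \mathcal{H}^1(J_i)=P(E)$. Essential boundedness of $E$ singles out one curve $J_0$ that bounds the unique unbounded complementary component, and the remaining $J_i$ enclose the holes of $E$ in the Jordan interior of $J_0$; when $E$ is simple only $J_0$ appears. The building block I then need is: for any rectifiable Jordan curve $\Gamma=\partial D$, construct smooth Jordan curves $\Gamma_n=\partial D_n$ with $\chi_{D_n}\to \chi_D$ in $L^1$ and $\mathcal{H}^1(\Gamma_n)\to \mathcal{H}^1(\Gamma)$, with the option of imposing either $D_n\subset D$ (inner) or $D\subset D_n$ (outer).

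To prove this smoothing lemma, which I expect to be the main obstacle of the whole argument, I would mollify the signed distance $d$ to $\Gamma$ at a scale $\epsilon$ much smaller than a chosen level $|t|$ and look at $\{d_\epsilon=t\}$. Smoothness for a.e.\ $t$ follows from Sard's theorem; the coarea formula together with $|\nabla d_\epsilon|\to 1$ in $L^1_{\mathrm{loc}}$ near $\Gamma$ yields, via a Fubini-type selection, a value of $t$ with $\mathcal{H}^1(\{d_\epsilon=t\})$ arbitrarily close to $\mathcal{H}^1(\Gamma)$. The delicate point is preserving the Jordan topology: at narrow passages of $\Gamma$ the level set $\{d_\epsilon<t\}$ may split into several components, and I would discard all but the unique one whose area is close to $|D|$. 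The discarded components have small total perimeter, controlled by the same coarea identity together with the planar isoperimetric inequality, so the selection still keeps the length close to $\mathcal{H}^1(\Gamma)$. Letting $t\to 0^-$ (resp.\ $t\to 0^+$) yields the inner (resp.\ outer) smooth Jordan approximation.

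With the lemma in hand, part~(i) is immediate: apply it to $\Gamma=J_0$ to obtain smooth simply connected $E_n\to E$ with $P(E_n)\to P(E)$. For part~(ii), fix $\varepsilon>0$ and pick $N$ with $\sum_{i>N}\mathcal{H}^1(J_i)<\varepsilon$; apply the inner version to $J_0$ and the outer version to each of $J_1,\ldots,J_N$, choosing the enlargements small enough that the approximating domains $D_n^1,\ldots,D_n^N$ are pairwise disjoint and contained in $D_n^0$ for $n$ large. Then $E_n^\varepsilon:=D_n^0\setminus\bigcup_{i=1}^N\overline{D_n^i}$ is a smooth planar domain with finitely many disjoint smooth disk-shaped holes, hence connected; it converges in $L^1$ to $E'_\varepsilon:=\mathrm{Int}(J_0)\setminus\bigcup_{i\le N}\overline{\mathrm{Int}(J_i)}$ with $P(E_n^\varepsilon)\to P(E'_\varepsilon)=P(E)-\sum_{i>N}\mathcal{H}^1(J_i)$, while $|E'_\varepsilon\triangle E|\le (4\pi)^{-1}\varepsilon^2$ by the planar isoperimetric inequality. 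A diagonal extraction in $(n,\varepsilon)$ combined with lower semicontinuity of the perimeter produces the required recovery sequence. Finally, the identities $\overline{P_C^r}=\overline{P_C}$ and $\overline{P_S^r}=\overline{P_S}$ follow by a further diagonal argument: the inequality $\ge$ is trivial since smooth connected (resp.\ simply connected) sets are indecomposable (resp.\ simple), and the reverse is obtained by applying (i)--(ii) to a recovery sequence for the right-hand side.
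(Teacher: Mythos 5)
Your overall structure (decompose $\partial^*E$ into Jordan boundaries via Theorem~\ref{thm2}, smooth each curve, truncate the infinite family of holes, diagonalize) matches the paper's, and the use of the planar isoperimetric inequality to control the discarded holes is the same. The smoothing mechanism is different: you use level sets of a mollified signed distance, whereas the paper invokes Schmidt's strict interior approximation (Theorem~\ref{thmSchmidt}) and then selects a connected component and saturates. Your version is plausible but incomplete at one point you do not address: after discarding the small components of $\{d_\epsilon<t\}$, the kept component can still have holes, so its boundary is not yet a single Jordan curve; one must saturate, as the paper does.

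The genuine gap is the direction of the approximation in part~(ii). You shrink the outer domain ($D_n^0\subset \mathrm{Int}(J_0)$) and enlarge each hole ($D_n^i\supset \mathrm{Int}(J_i)$), and assert that "choosing the enlargements small enough" makes the $D_n^i$ pairwise disjoint and contained in $D_n^0$. This cannot be guaranteed. An indecomposable $E$ may have two holes $T_i,T_j$ whose closures touch at a point $p$, or a hole whose closure touches $J_0$: such configurations do not decompose $E$. Your level-set construction enlarges a hole by a uniform amount $t>0$, so both $D_n^i$ and $D_n^j$ contain a ball of radius $\approx t$ around $p$ and overlap for every admissible $t$; similarly $D_n^i$ pokes outside $D_n^0$ when a hole touches $J_0$. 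The justification "a disk with finitely many disjoint smooth disk-shaped holes, hence connected" therefore does not apply, and repairing it (merging the overlapping enlarged holes, re-saturating, rounding the resulting corners, re-estimating the perimeter) is nontrivial and not sketched. The paper avoids this entirely by going the opposite way: it enlarges $Y_0=\mathrm{Int}(J_0^+)$ (by approximating its complement from within) and shrinks the holes $T_j$ from within. Since the $T_j$ are disjoint open sets and $T_{j,\epsilon}\subset T_j\subset Y_0\subset Y_{0,\epsilon}$, the shrunk holes are automatically pairwise disjoint with closures inside $Y_{0,\epsilon}$, and connectedness of the approximant is immediate. To fix your argument, reverse the inner/outer roles and fatten $E$ rather than thin it.
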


\textcolor{white}{text}

\noindent Our main result concerns a characterization of the connected and simply connected perimeters $\overline{P_C}$,  $\overline{P_S}$ for any set $E\con\R^2$ such that $\cH^1(\pa E\De \pa^* E)=0$, where $\pa^*E$ is the reduced boundary of $E$~\cite{AmFuPa00}. For such a set $E$, $St(E)$ is defined as the Steiner length of $\overline{E^1}$, i.e., the length of a minimal $1$-set connecting all parts of $\overline{E^1}$ (the closure of the set of points with unit $\cL^2$-density with respect to $E$). Similarly $St_c(E)$ denotes the Steiner length of $\overline{E^0}$. Our main result is the following:

\begin{reptheorem}{thmmain}
	Let $E\con\R^2$ be an essentially bounded set with finite perimeter such that $\pa E = \pa^* E \,\cup\, X$ with $\cH^1(X)=0$. We have
	\beqs 
		\overline{P_C}(E) = P(E) + 2St(E),
	\eeqs
	\beqs 
		\overline{P_S}(E) = P(E) + 2St(E) + 2St_c(E). 
	\eeqs
\end{reptheorem}

\textcolor{white}{text}

\noindent We leave for future work an extension of this result to higher dimension (which would require replacing simply connected sets by contractible sets).\\

\noindent The organization of the paper is the following: in Section 2 we recall the basic notions and results about indecomposable and simple sets; we also prove some technical lemmas that we will use in the sequel. In Section 3 we prove Theorem \ref{thmA}. Section 4 is devoted to the proof of Theorem \ref{thmmain}. Finally in Section 5 we discuss an application of the functionals $\overline{P_C}, \overline{P_S}$ to existence issues for a nonlocal minimization problem.

\textcolor{white}{text}


\section{Notation and preliminary results}

\subsection{Notation}

Let $E,F$ be Borel sets of $\R^2$, we introduce the following notations:
\begin{itemize}
	\item $\mathcal{N}_\delta(E) = \{ x \in \R^2 \ | \ d(x,E)< \delta \}$ for any $\de>0$.
	\item $|E|$ is the Lebesgue measure of $E$.
	\item $\cH^k(E)$ is the $k$-dimensional Hausdorff measure of $E$.
	\item $\cL^k$ is the $k$-dimensional Lebesgue measure.
	\item $d_\cH$ is the Hausdorff distance.
	\item $E = F  \mbox{ mod } \nu$ if $\nu$ is a positive measure and $\nu(E\Delta F)=0$, where $E\Delta F$ 
	is the symmetric difference between $E$ and $F$, that is, $E\Delta F = (E\sm F) \cup (F \sm E)$.
	\item $E^t$ is the set of points of $E$ with a density equal to $t$, i.e.,
	\[ E^t = \left\{ x \in \R^2 \ | \ \lim_{r \to 0} \frac{|E\cap B_r(x)|}{|B_r(x)|} = t \right\} \]
	where $B_r(x)$ is the open ball with center $x$ and radius $r$.
	\item $(\ga)$ is the image of a curve $\ga:[a,b]\to\R^2$.
	\item $\pa E$, $\mathring{E}$ and $\overline{E}$ are the classical topological boundary, interior and closure of $E$, respectively.
	\item $\pa^*E:=\R^2\sm (E^0\cup E^1)$ is the essential boundary of $E$.
	\item $|\mu|$ is the total variation measure of a Radon measure $\mu$.
	\item $D\chi_E$ is the gradient measure of a characteristic function $\chi_E\in BV$.
	\item $\cF E$ is the reduced boundary of a set of finite perimeter $E$, i.e.,
		\[\cF E=\bigg\{x\in\R^2\,\,|\,\,\exists\nu_E(x):=\lim_{r\searrow0}\frac{D\chi_E(B_r(x))}{|D\chi_E|(B_r(x))}\in S^1\bigg\}. \]
	\item $\sharp A$ is the cardinality of a set $A$.
	\item $\ind_\ga(x)$ for $\ga:[a,b]\to\R^2$ a closed curve and $x\not\in(\ga)$ is the index of $x$ with respect to $\ga$.
	\item $A\simeq B$ means that $A$ and $B$ are homeomorphic.
\end{itemize}

\subsection{Connectedness for sets of finite perimeter}\label{sub1}

A theory of measure-theoretic connectedness for sets of finite perimeter was developed thoroughly in~\cite{AmCaMaMo01}. We recall some useful facts for the particular case of planar sets.

\begin{defn}
	Let $E\con\R^2$ be a set with finite perimeter. We say $E$ is \emph{decomposable} if there exist two measurable non negligible sets $A$ and $B$ such that
	\[ E = A \cup B \qquad \text{and} \qquad P(E) = P(A) + P(B) .\]
	We say that a set is \emph{indecomposable} if it is not decomposable.
\end{defn}

\begin{remark}
	An open connected set $E$ with $\cH^1(\pa E)<+\infty$ is indecomposable.
\end{remark}

\noindent The following decomposition result holds:
\begin{thm}[Decomposition Theorem~\cite{AmCaMaMo01}] \label{thm1}
	Let $E\con\R^2$ be a set of finite perimeter. There exists a unique family of sets $(E_i)_{i \in I}$ with $I$ at most countable such that\\
		i) $|E_i|>0$,\\
		ii) $P(E)=  \sum_{i\in I} P(E_i)$,\\
		iii) $\cH^{1} \left( E^1 \sm \bigcup_{i\in I} E_i^1 \right) = 0$,\\
		iv) $E_i$ is indecomposable and maximal, i.e., for all indecomposable set $F\subset E$, there exists $i\in I$ such that $F \subset E_i$. 
\end{thm}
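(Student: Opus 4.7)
The plan is to exploit the planar Jordan-curve structure of the reduced boundary of a finite-perimeter set. The central structural input is the following fact: for any $E\con\R^2$ with $P(E)<+\infty$, the reduced boundary $\cF E$ can be represented, modulo $\cH^1$-null sets, as a disjoint union of countably many oriented rectifiable Jordan curves $\{\ga_j\}_{j\in J}$ satisfying $P(E)=\sum_j\cH^1((\ga_j))$, whose orientations are compatible with the measure-theoretic outer normal $\nu_E$. This is established by parameterizing the 1-rectifiable set $\cF E$ by countably many injective Lipschitz arcs and then gluing their endpoints end-to-end respecting orientation: a nonclosed maximal arc would terminate at a point where the oriented blow-up tangent exists and forces continuation, a contradiction, so every maximal arc must close into a Jordan curve.

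From this Jordan decomposition I would build the components as follows. By Jordan's theorem each $(\ga_j)$ bounds a topological disc $\Omega_j\con\R^2$, and orientation compatibility with $\nu_E$ forces $\Omega_j$ to be $\cL^2$-contained either in $E^1$ (``external'' curves) or in $E^0$ (``hole'' curves). Consider the nesting partial order defined by $\ga_j\preceq\ga_k\iff\Omega_j\con\Omega_k$; the components $E_i$ are then the connected components of $\R^2\sm\bigcup_j(\ga_j)$ lying $\cL^2$-a.e. in $E^1$, equivalently, each $E_i$ is a maximal external disc with the maximal hole discs immediately nested inside it removed. Each $E_i$ contains a nonempty open region of density one, so $|E_i|>0$, and the family is at most countable since the set $J$ is.

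The four required properties then follow. Additivity $P(E)=\sum_i P(E_i)$ is immediate: the reduced boundary of each $E_i$ is the union of its bounding Jordan curves, and these partition $\{\ga_j\}$. The covering property $\cH^1(E^1\sm\bigcup_i E_i^1)=0$ holds because every density-one point of $E$ lies in exactly one of the cells of $\R^2\sm\bigcup_j(\ga_j)$ classified as inside $E$. Indecomposability of each $E_i$ follows from a Jordan-index argument: an additive splitting $E_i=A\cup B$ with $|A|,|B|>0$ would force each Jordan curve of $\cF E_i$ to lie entirely in $\cF A$ or $\cF B$, and then the index of the outer curve of $E_i$ forces one of $A$, $B$ to coincide $\cL^2$-a.e. with $E_i$. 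Maximality: any indecomposable $F\con E$ has $\cF F\con\cF E$ mod $\cH^1$, so $\cF F$ is a union of whole $\ga_j$'s, and the connectedness of the associated nested Jordan cluster forces $F\con E_i$ for a single $i$. Uniqueness follows from maximality by a standard mutual-inclusion argument between any two candidate families.

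The hard part is the very first step, the Jordan decomposition of $\cF E$. Although 1-rectifiability gives a parameterization by Lipschitz arcs, showing that these arcs close into pairwise disjoint Jordan curves (rather than into a graph with branching or triple points) depends on planar topology and the uniqueness of the oriented blow-up tangent at $\cH^1$-a.e. boundary point. The argument uses Jordan's theorem and Schoenflies' theorem in essential ways, both of which fail in higher dimensions; this is exactly the reason why the statement as given is special to $\R^2$, and why any extension to dimension $n\geq 3$ requires a genuinely different approach based on the abstract atomic structure of the $\sigma$-algebra of perimeter-splitting subsets of $E$.
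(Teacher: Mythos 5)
The paper does not prove this theorem; it is cited verbatim from~\cite{AmCaMaMo01}, where it is established in \emph{all} dimensions $\R^N$, not just the plane. This already undermines your closing remark: the statement is not ``special to $\R^2$,'' and the approach you relegate to $n\ge 3$ (the abstract atomic structure of the family of perimeter-additive subsets of $E$) is precisely the approach used in~\cite{AmCaMaMo01} even for $N=2$.

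Beyond that, your route has two genuine gaps. First, the Jordan decomposition of $\cF E$ is not a consequence of $1$-rectifiability in the way you sketch. Your argument that ``a nonclosed maximal arc would terminate at a point where the oriented blow-up tangent exists and forces continuation'' is not sound: the blow-up of $\cF E$ is controlled only $\cH^1$-a.e., and a Lipschitz arc can perfectly well terminate at a point not in $\cF E$, or at a point where several arcs meet; $1$-rectifiability alone does not exclude branching along $\cH^1$-null sets. In the cited paper the Jordan-curve structure of the boundary (what appears here as Theorem~\ref{thm2} and Proposition~\ref{prop1}) is \emph{deduced from} Theorem~\ref{thm1}, first for simple sets and then in general, so your order of deduction is effectively circular: you are using the output (boundary decomposition into disjoint Jordan loops) to produce the input (the $M$-connected components). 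A non-circular version of your first step would need the structure theorem for boundaryless integral $1$-currents in the plane (that indecomposable boundaryless integral $1$-currents are carried by simple Lipschitz loops), which is a genuine theorem, not a pointwise tangent-line argument.

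Second, even granting a Jordan decomposition $\cF E = \bigsqcup_j (\ga_j)$ mod $\cH^1$, the claim that orientation compatibility with $\nu_E$ ``forces $\Omega_j$ to be $\cL^2$-contained either in $E^1$ or in $E^0$'' is false as stated. For an annulus, the disc bounded by the outer circle is not contained in $E^1$. What the orientation gives you is only the local side condition on a tubular neighbourhood of $(\ga_j)$, and the correct global statement requires the nesting bookkeeping of Theorem~\ref{thm2} (parts i)--v)). Your later definition of $E_i$ via connected components of $\R^2\sm\bigcup_j(\ga_j)$ is closer to the mark, but then the indecomposability and maximality arguments are only sketched, and they again implicitly use properties of the $M$-components that are what Theorem~\ref{thm1} is supposed to supply. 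In short: the construction you describe is a plausible reading of the \emph{consequences} of Theorems~\ref{thm1}--\ref{thm2}, but it does not constitute an independent proof of Theorem~\ref{thm1}, whose proof in~\cite{AmCaMaMo01} is dimension-free and uses a maximality/exhaustion argument on perimeter-additive subsets rather than planar topology.
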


\noindent The sets $E_i$ in Theorem \ref{thm1} are called the \emph{M-connected components} of $E$.

\begin{defn}
	Let $E\con\R^2$ be a set with finite perimeter.\\
		i) If $E$ is indecomposable then a \emph{hole} of $E$ is a $M$-connected component of $\R^2 \sm E$ with finite measure.\\
		ii) If $E$ is indecomposable then the \emph{saturation} of $E$, denoted $sat(E)$, is the union of $E$ and its holes.\\
		iii) If $E$ is decomposable then its saturation $sat(E)$ is given by the union of the saturation of its $M$-connected components $E_i$, i.e.,
		\[ sat(E) = \bigcup_{i\in I} sat(E_i) .\]
		iv) $E$ is called \emph{saturated} if $E=sat(E)$.\\
		v) $E$ is called \emph{simple} if it is saturated and indecomposable.\\
		vi) if $|E|<+\infty$, the unique M-connected component of $\R^2\sm E$ with infinite measure is the \emph{exterior} $ext(E)$ of $E$.\\
\end{defn}

\begin{defn}
	A subset $J$ of $\R^2$ is a \emph{Jordan boundary} if there exists a simple set $E$ such that $J = \pa^* E  \mbox{ mod } \cH^{1}$. Such a set $E$ is necessarily unique $\mbox{mod }\cL^2$, $E$ is called the \emph{interior} of $J$, and it is denoted by $int(J)$.
\end{defn}

\noindent The following result describes the decomposition of the reduced boundary of a set of finite perimeter in terms of a collection of nested external Jordan boundaries $J_i^+$ and internal Jordan boundaries $J_k^-$ (see~Figure \ref{figSetsHoles}). In order to simplify the statement, the class of Jordan boundaries is enlarged by introducing a \emph{formal} Jordan boundary $J_\infty$ whose interior is $\R^2$ and another \emph{formal} Jordan boundary $J_0$ whose interior is empty. We also set $\cH^1(J_\infty)=\cH^1(J_0)=0$. We will denote by $\cS$ this extended class of Jordan boundaries. This allows to consider sets with finite and infinite measure and we can always assume that the list of components (or holes of the components) given by the following theorem is infinite, possibly adding to it infinitely many $int(J_0)$.\\

\noindent With such definitions, Theorem \ref{thm1} can be refined in the following way.

\begin{thm}[Boundary decomposition~\cite{AmCaMaMo01}]\label{thm2}
	Let $E\con\R^2$ be a set of finite perimeter. Then there exists a unique decomposition $\mbox{mod }\cH^1$ of $\pa^* E$ into Jordan boundaries
	\beq \label{eq1}
	 \{ J^+_i, J^-_k \ | \ i,k \in \N \}\con\cS
	\eeq
	with possibly $\cH^{1}(J^{\pm}_j)=0$, i.e., $int(J^{\pm}_j)=\emptyset$ or $\R^2 \mbox{ mod } \cH^{1}$ such that:\\
		i) $int(J^+_i)$ and $int(J^+_j)$ are either disjoint or one subset of the other.\\
		ii) $int(J^-_k)$ and $int(J^-_j)$ are either disjoint or one subset of the other.\\
		iii) For all $k$ there exists $i$ such that $int(J^-_k)\subset int(J^+_i)$.\\
		iv) If $int(J^+_j)\subset int(J^+_i)$ for some $i \neq j$ then there exists $k$ such that
		$int(J^+_j)\subset int(J^-_k) \subset int(J^+_i)$.\\
		v) If $int(J^-_j)\subset int(J^-_k)$ for some $j \neq k$ then there exists $i$ such that
		$int(J^-_j)\subset int(J^+_i) \subset int(J^-_k)$.\\
		vi) $P(E)=  \sum_i \cH^{1}(J^+_i) + \sum_k \cH^{1}(J^-_k)$.\\
		vii) for all $i$, we denote $L_i = \{ k \ | \ int(J^-_k) \subset int(J^+_i) \}$ and
		\[ Y_i = int(J^+_i) \sm \bigcup_{k \in L_i} int(J^-_k) . \]
		The sets $(Y_i)_i$ are pairwise disjoint and indecomposable and
		$E =  \bigcup_i Y_i \mbox{ mod }\cL^2$.
\end{thm}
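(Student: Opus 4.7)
The plan is to build the decomposition by iterating Theorem \ref{thm1}: first on $E$ itself to obtain the M-connected components $(E_i)_{i \in I}$, and then on the complement of each component to identify its holes. For each $i$, I would argue that the saturation $sat(E_i)$ is itself a simple set: indecomposability is inherited from $E_i$ (a nontrivial additive splitting of $sat(E_i)$ would either cut $E_i$, contradicting its indecomposability, or cut a hole of $E_i$, which contradicts the M-connectedness of that hole as an M-connected component of $\R^2\sm E_i$), while saturation is built in by construction. Setting $J_i^+$ as the Jordan boundary of $sat(E_i)$ gives the outer boundaries. Applying Theorem \ref{thm1} to $\R^2 \sm E_i$ and discarding the unique unbounded component then produces the holes $(H_k)_{k \in L_i}$, each of which is simple by the analogous argument. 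Defining $J_k^-$ as the Jordan boundary of $H_k$, and ranging over all $i$ and all holes, yields the desired family \eqref{eq1}.

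For the nesting properties (i)--(v), the strategy combines planar topology with the maximality in Theorem \ref{thm1}(iv). Two distinct saturations $sat(E_i)$ and $sat(E_j)$ cannot cross: their symmetric intersections would produce a strictly finer indecomposable refinement of $E$, contradicting the maximality of the M-connected components. This gives (i), and (ii) follows by the same argument applied to complementary holes. Property (iii) is immediate because every hole of $E_i$ sits inside $sat(E_i) = int(J_i^+)$. Properties (iv) and (v) encode the alternation pattern: if $sat(E_j) \subsetneq sat(E_i)$, then since the $E_\ell$ are pairwise disjoint up to $\cL^2$-null sets, $E_j$ must lie in $sat(E_i)\sm E_i$, which is the disjoint union of the holes of $E_i$; hence $E_j$ sits in one specific hole $H_k$, providing the separating $int(J_k^-)$. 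A symmetric argument on complements gives (v). For (vi), I would verify that $\pa^* E_i  \mbox{ mod } \cH^1$ decomposes as the disjoint union of $J_i^+$ and $\{J_k^-\}_{k\in L_i}$ by an elementary computation on reduced boundaries (using that adding a saturated hole to $E_i$ only removes the boundary piece shared with that hole), and then sum over $i$ via the perimeter additivity in Theorem \ref{thm1}(ii). Property (vii) is essentially a reformulation: the regions $Y_i$ coincide with $E_i$ modulo $\cL^2$.

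The main obstacle is the rigorous verification that saturation of an indecomposable set produces a simple set, together with the non-crossing argument for the resulting Jordan boundaries. This requires handling possible overlaps of reduced boundaries at $\cH^1$-negligible sets, and carefully distinguishing $\cL^2$-identifications from $\cH^1$-identifications. Once this is in hand, uniqueness of the decomposition modulo $\cH^1$ is inherited from the uniqueness clause of Theorem \ref{thm1}, applied both to $E$ and to each complement $\R^2 \sm E_i$.
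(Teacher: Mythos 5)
The paper states Theorem~\ref{thm2} as a quotation from \cite{AmCaMaMo01} and does not reprove it, so there is no internal argument to compare against; your blind reconstruction does follow the natural outline of that reference: apply Theorem~\ref{thm1} to $E$ to obtain the $E_i$, set $J_i^+$ equal to the boundary of $sat(E_i)$, apply Theorem~\ref{thm1} again to $\R^2\sm E_i$ to produce the holes whose boundaries furnish the $J_k^-$, and deduce (vii) directly from the definition of saturation. The facts you invoke about indecomposable sets---that $sat(E_i)$ is simple and that the holes of an indecomposable set are simple---are indeed established in \cite{AmCaMaMo01}.

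However, two intermediate justifications are not sound as written. For the non-crossing property (i), you argue that a crossing of $sat(E_i)$ and $sat(E_j)$ would yield a strictly finer indecomposable refinement of $E$, contradicting maximality of the $E_i$. But the maximality clause of Theorem~\ref{thm1}(iv) governs only indecomposable subsets of $E$ itself, whereas $sat(E_i)\cap sat(E_j)$ need not meet $E$ in positive measure (it can lie entirely inside holes), so no contradiction with maximality follows. The correct mechanism is a separate planar crossing lemma from \cite{AmCaMaMo01}: if $F$ is simple and $G$ is indecomposable with $|F\cap G|>0$ and $|G\sm F|>0$, then $F\con G \mbox{ mod }\cL^2$; applied with $F=sat(E_i)$ and $G=sat(E_j)$ (both simple, hence indecomposable) this gives (i) and, symmetrically, (ii). Likewise, for (vi) you need that $\cF sat(E_i)$ and the $\cF H_k$ with $k\in L_i$ are pairwise $\cH^1$-essentially disjoint and that their union exhausts $\cF E_i \mbox{ mod }\cH^1$. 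This is not an elementary computation on reduced boundaries; it is exactly the point where one must rule out that two distinct holes, or a hole and the outer boundary, share a set of positive $\cH^1$ measure, since such an overlap would merge them into a single M-connected component of the complement. Apart from these two steps, which constitute the genuine technical content of the theorem, your outline is the right one.
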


\begin{figure}[h]
	\begin{center}
		\begin{tikzpicture}[scale=0.6]
		\filldraw[fill=gray!50, line width=0.3mm, draw=black, even odd rule]
		(0,0) circle (4cm)  (1,1) circle (1.7cm)  (-1,-2) circle (0.9cm)  (1,1)ellipse (0.5cm and 0.7cm);
		\filldraw[fill=gray!70, line width=0.3mm, draw=black, even odd rule]
		(5,1) circle (0.6cm);
		\path[font=\normalsize]
		(4,-2.8)node[left]{$J^+_1$};
		\path[font=\normalsize]
		(6.7,1)node[left]{$J^+_2$};
		\path[font=\normalsize]
		(-0.6,-2.4)node[left]{$J^-_1$};
		\path[font=\normalsize]
		(-0.41,1)node[left]{$J^-_2$};
		\path[font=\normalsize]
		(2.52,1)node[left]{$J^+_3$};
		\end{tikzpicture}
	\end{center}
	\caption{Decomposition of the boundary of a finite perimeter set in $\R^2$ using Jordan boundaries (i.e., boundaries of simple sets) denoted as in Theorem \ref{thm2}.}\label{figSetsHoles}
\end{figure}

\begin{prop}[Boundary of a simple planar set, \cite{AmCaMaMo01}]\label{prop1}
	Let $E\con\R^2$ be a simple set with $0<|E|<+\infty$. Then there exists a Jordan curve $\Gamma$ such that $\pa^* E = \Gamma  \mbox{ mod } \cH^1$. Moreover, $\Gamma$ admits a Lipschitz parametrization and $P(E) = \cH^1 (\Gamma)$.
\end{prop}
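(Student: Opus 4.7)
The plan is to first reduce the essential boundary of $E$ to a single non-trivial Jordan boundary using the measure-theoretic decomposition results recalled above, and then to upgrade this set-theoretic description to a Lipschitz simple closed parametrization via a topological converse of the Jordan curve theorem combined with rectifiability.

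First I would apply Theorems \ref{thm1} and \ref{thm2}. Indecomposability of $E$ forces the $M$-connected decomposition of $E$ to consist of a single piece $Y_1=E$. Saturation of $E$, together with $|E|<+\infty$, means that no bounded $M$-connected component of $\R^2\sm E$ is present, so all the internal Jordan boundaries $J^-_k$ in the boundary decomposition are trivial (i.e.\ $int(J^-_k)=\emptyset$ mod $\cL^2$). Hence there is exactly one non-trivial external Jordan boundary $J^+_1$, and $\pa^* E=J^+_1$ mod $\cH^1$, with $P(E)=\cH^1(J^+_1)$ by item vi of Theorem \ref{thm2}.

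Next I would fix the pointwise representative $\widetilde E:=\overline{E^1}$ and set $K:=\pa\widetilde E$. Then $K$ is compact with $\cH^1(K)<+\infty$, and the combination of indecomposability and saturation implies that $\R^2\sm K$ has exactly two connected components: a bounded one equivalent mod $\cL^2$ to $E$, and the unbounded exterior $ext(E)$. At $\cH^1$-a.e.\ point $x\in\cF E$ the blow-up of $E$ at $x$ is a half-plane, so $K$ is locally a Lipschitz graph around $x$. Coupling this local regularity with the finiteness of $\cH^1(K)$ and the topological separation property yields that $K$ is connected and locally connected.

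A compact, connected, locally connected subset of $\R^2$ with finite $1$-dimensional Hausdorff measure that separates the plane into exactly two domains is the image of a simple closed curve, by the classical Sch\"onflies--Torhorst theorem. Reparametrizing by arclength, which is possible because $K$ is topologically a circle and $\cH^1$-rectifiable of finite total measure, produces the desired Lipschitz Jordan curve $\Gamma$; the identity $P(E)=\cH^1(\Gamma)$ then follows from Federer's equality $P(E)=\cH^1(\pa^* E)$ combined with $\pa^* E=\Gamma$ mod $\cH^1$. The main obstacle I expect is rigorously proving the local connectedness of $K$ and ruling out pathological $\cH^1$-null ``filaments'' of the topological boundary that might not lie inside the essential one; this requires a careful blow-up analysis at points of $\pa\widetilde E$ which do not belong to $\cF E$, after which the topology takes over cleanly.
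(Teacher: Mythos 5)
The paper cites this proposition from \cite{AmCaMaMo01} without proof, so there is nothing internal to compare against; I review the proposal on its own terms. Your opening step is both vacuous and circular. In this paper a ``Jordan boundary'' is \emph{defined} as a set of the form $\partial^* F \mbox{ mod }\cH^1$ for a simple set $F$, carrying no a priori curve structure; applying Theorem \ref{thm2} to a simple $E$ and concluding that $\partial^* E = J^+_1 \mbox{ mod }\cH^1$ with $P(E)=\cH^1(J^+_1)$ merely restates $\partial^*E=\partial^*E$ together with Federer's identity $P(E)=\cH^1(\partial^*E)$, and tells you nothing about $J^+_1$ being a curve. In \cite{AmCaMaMo01} the boundary decomposition theorem is built \emph{on top of} the Jordan-curve description of simple sets, so leaning on it here would be circular in any case.

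The second half is where the content must be, and there the hypotheses of the Sch\"onflies--Torhorst characterization are asserted rather than proved. You need $K=\partial\overline{E^1}$ to be compact, connected, locally connected, to separate the plane into exactly two components of which it is the common boundary, and (for the length and Lipschitz statements) $\cH^1(K)<\infty$, indeed $\cH^1(K\setminus\partial^*E)=0$. None of these is established. Finite perimeter controls $\cH^1(\partial^* E)$ but gives no a priori bound on $\cH^1(\partial\overline{E^1})$, which can be infinite for a generic finite-perimeter representative; it is precisely indecomposability and saturation that must rule this out, and you give no argument. Also, ``blow-up is a half-plane'' at a point $x\in\cF E$ is an $L^1_{\mathrm{loc}}$ statement and does not imply that $K$ is a Lipschitz graph near $x$ (rectifiability gives containment in countably many Lipschitz graphs, not local graphicality), so the regularity input is overstated, and it in any event says nothing about the behaviour of $K$ at points outside $\cF E$. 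The ``main obstacle'' you acknowledge in your last sentence is not a detail: it is essentially the whole proof. For comparison, the argument in \cite{AmCaMaMo01} approximates $E$ by smooth simply connected sets with converging perimeter, parametrizes the boundary Jordan curves at constant speed, extracts a uniform limit via Ascoli-Arzel\`{a}, and identifies the image of the limit with $\partial^*E \mbox{ mod }\cH^1$; the Lipschitz parametrization and the length identity then come from lower semicontinuity of length under uniform convergence, with no appeal to the delicate topological characterization you propose.
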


\begin{remark}
	From Proposition \ref{prop1} we see that the Jordan boundaries given in Theorem \ref{thm2} can be parametrized by Lipschitz Jordan curves (i.e., Jordan curves which admit Lipschitz parameterizations). Moreover, if $E$ is simple then the family in \eqref{eq1} is reduced to only one nontrivial curve $J^+_0$.
\end{remark}

\begin{remark}\label{rem-essbound}
	If $E$ is a set of finite perimeter, it holds that
	\beq
		E\mbox{ indecomposable,}\quad |E|<+\infty \qquad \Rightarrow \qquad E \mbox{ essentially bounded.}
	\eeq
	In fact, letting $F=sat(E)$, we have from \cite{AmCaMaMo01} that $F$ is simple. Since $|E|<+\infty$, by definition of saturation we have that the exterior $ext(E)$ is disjoint $\mbox{mod }\cL^2$ from $F$. Hence $|F|<+\infty$, thus $F$ is equivalent to $int(\Ga)$ for a Lipschitz Jordan curve $\Ga$, then $F$ is essentially bounded. Since $E\con F$, the set $E$ is essentially bounded as well. 
\end{remark}

\begin{remark} \label{rem1}
	We recall that if $E\con\R^2$ is a set of finite perimeter such that $\pa E = \cF E \mbox{ mod }\cH^1$, then
	\beq
	E^1=\mathring{E} \quad\mbox{ mod }\cH^1, \qquad\qquad E^0=\R^2\sm \overline{E} \quad\mbox{ mod }\cH^1.
	\eeq
\end{remark}

\noindent We finish this part with some consequences we will need in the sequel.

\begin{lemma} \label{lem1}
	Let $E\con\R^2$ be a set of finite perimeter with $0<|E|<+\infty$. Suppose that $E$ is indecomposable, then $E$ is essentially bounded and in the notation of Theorem \ref{thm2} it holds that
	\beq
		\{J^+_i\}_{i\in\N}=\{J^+_0\}, \qquad \forall i:\quad int(J^-_i)\con int(J^+_0) , \qquad \forall i\neq k:\quad |int(J^-_i) \cap int(J^-_k)|=0 
	\eeq
	up to relabeling and dropping curves $J^\pm_j$ with $|int(J^\pm_j)|=0$. In particular $E=Y_0:=int(J^+_0)\sm \cup_i int(J^-_i) \mbox{ mod }\cL^2$.
\end{lemma}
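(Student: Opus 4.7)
The essential boundedness of $E$ is the content of Remark~\ref{rem-essbound}. Apply Theorem~\ref{thm2} to $E$ to obtain Jordan boundaries $\{J^+_i, J^-_k\}$ and the associated sets $Y_i = int(J^+_i) \setminus \bigcup_{k \in L_i} int(J^-_k)$: the $Y_i$ are pairwise disjoint and indecomposable, they satisfy $E = \bigcup_i Y_i$ $\mbox{ mod }\cL^2$, and Theorem~\ref{thm2}(vi) together with the definition of $L_i$ yields $P(Y_i) = \cH^1(J^+_i) + \sum_{k\in L_i} \cH^1(J^-_k)$ as well as $P(E) = \sum_i P(Y_i)$.

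The first step is to exploit the indecomposability of $E$ to reduce to a single $Y_i$. If two distinct indices $i_1 \neq i_2$ satisfied $|Y_{i_1}|, |Y_{i_2}| > 0$, then setting $A = Y_{i_1}$ and $B = \bigcup_{k \neq i_1} Y_k$ one would have $|A|, |B| > 0$, $A \cup B = E$ $\mbox{ mod }\cL^2$, and by subadditivity of the perimeter on disjoint unions
\[
P(E) \leq P(A) + P(B) \leq P(Y_{i_1}) + \sum_{k \neq i_1} P(Y_k) = P(E),
\]
forcing $P(E) = P(A) + P(B)$ and hence the decomposability of $E$, a contradiction. Therefore exactly one $Y_i$ has positive measure; after relabeling, call it $Y_0$, so that $E = Y_0$ $\mbox{ mod }\cL^2$.

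For every $i \neq 0$, the equality $|Y_i|=0$ forces $P(Y_i)=0$ and hence $\cH^1(J^+_i)=0$ together with $\cH^1(J^-_k)=0$ for every $k \in L_i$. Since $E$ is essentially bounded with finite measure, none of these Jordan boundaries can coincide with $J_\infty$ (otherwise the hole structure attached to $J^+_i$ would be incompatible with the essential boundedness and finite area of $E$), so the vanishing of $\cH^1$ is equivalent to $|int(\cdot)|=0$, and all these Jordan boundaries are dropped as the statement allows. After pruning, $J^+_0$ is the only external Jordan boundary, and Theorem~\ref{thm2}(iii) places every remaining $J^-_k$ into $L_0$, which gives the inclusion $int(J^-_k)\subset int(J^+_0)$.

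It remains to prove the pairwise $\cL^2$-almost-disjointness of the $int(J^-_k)$'s. Suppose by contradiction that $|int(J^-_j)\cap int(J^-_k)|>0$ for some $j\neq k$. By Theorem~\ref{thm2}(ii) the two interiors are nested, say $int(J^-_j)\subset int(J^-_k)$; since distinct indices correspond to distinct Jordan boundaries (by the uniqueness $\mbox{ mod }\cH^1$ in Theorem~\ref{thm2}), the inclusion is strict $\mbox{ mod }\cL^2$. Theorem~\ref{thm2}(v) would then yield some index $i$ with $int(J^-_j)\subset int(J^+_i)\subset int(J^-_k)$: the value $i=0$ is absurd because $int(J^-_k)\subsetneq int(J^+_0)$, whereas every other $J^+_i$ has just been pruned as trivial. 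This contradiction proves the third assertion, and the identity $E=Y_0$ $\mbox{ mod }\cL^2$ was already obtained in the second paragraph. The main delicate point is the pruning step, namely guaranteeing that each vanishing $Y_i$ with $i\neq 0$ gives rise only to Jordan boundaries with empty interior, so that the nested hierarchy of Theorem~\ref{thm2} really collapses to the flat picture of one outer Jordan boundary with pairwise disjoint holes claimed in the lemma.
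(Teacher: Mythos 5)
Your proof takes a genuinely different route from the paper's. The paper works directly with the Jordan-boundary family: it first observes (via isoperimetry and finiteness of perimeter) that there are only finitely many curves containing any given one, extracts a maximal $J^+_{i_0}$, uses indecomposability plus property (iii) to show every other $int(J^+_k)$ is nested inside it, rules out any nested $J^+_k$ via property (iv) (which would produce a decomposition of $E$), and then derives the disjointness of the $int(J^-_i)$'s from property (v). You instead work with the pieces $Y_i$ of Theorem~\ref{thm2}(vii), use subadditivity of perimeter to show only one $Y_i$ has positive measure, and then try to kill all other $J^\pm$'s by perimeter accounting. The first part (one nontrivial $Y_0$) is fine, although it is essentially a re-proof of the uniqueness in Theorem~\ref{thm1}, and your final paragraph on property (v) is also correct.

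However, there is a genuine gap in the pruning step. You assert that Theorem~\ref{thm2}(vi) and the definition of $L_i$ give $P(Y_i)=\cH^1(J^+_i)+\sum_{k\in L_i}\cH^1(J^-_k)$ and that summing these reproduces $P(E)$. This would require the index sets $L_i$ to be pairwise disjoint, but as written in Theorem~\ref{thm2}, $L_i=\{k\ :\ int(J^-_k)\subset int(J^+_i)\}$, and these sets do overlap for nested configurations. For instance, take $E=(B_3\setminus\overline{B_2})\cup(B_{3/2}\setminus\overline{B_1})$: then $J^+_1=\partial B_3$, $J^-_1=\partial B_2$, $J^+_2=\partial B_{3/2}$, $J^-_2=\partial B_1$, and $L_1=\{1,2\}$, $L_2=\{2\}$, so $\sum_i\big[\cH^1(J^+_i)+\sum_{k\in L_i}\cH^1(J^-_k)\big]$ exceeds $P(E)$ by $\cH^1(\partial B_1)$, and $P(Y_1)\neq\cH^1(J^+_1)+\sum_{k\in L_1}\cH^1(J^-_k)$. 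Thus the implication ``$P(Y_i)=0\Rightarrow\cH^1(J^+_i)=0$ and $\cH^1(J^-_k)=0$ for all $k\in L_i$'' does not follow as you derived it. The conclusion is still true and salvageable: once you know $E=Y_0$ mod $\cL^2$, you only need the one-sided estimate $P(Y_0)\le\cH^1(J^+_0)+\sum_{k\in L_0}\cH^1(J^-_k)\le\sum_i\cH^1(J^+_i)+\sum_k\cH^1(J^-_k)=P(E)=P(Y_0)$, which forces $\cH^1(J^+_i)=0$ for every $i\neq 0$ and $\cH^1(J^-_k)=0$ for every $k\notin L_0$. With that fix, the rest of your argument (pruning, placing the surviving $J^-_k$'s in $L_0$, and invoking property (v)) goes through.
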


\begin{proof}
	Let $\{J^\pm_i:i\in\N\}$ be the family of Jordan boundaries given by Theorem \ref{thm2}. Up to dropping a subfamily of such curves, we can assume $|int(J^\pm_i)|>0$ that for any $i$. Then for any $J^\pm_i$ there exist finitely many indexes $j$ such that $int(J^\pm_i)\con int(J^\pm_j)$, in fact by isoperimetric inequality for any such $j$ we have $\cH^1(J^\pm_j)\ge C(i)>0$ and $E$ has finite perimeter. Therefore, using also property $iii)$ of Theorem \ref{thm2}, there exists at least a curve $J^+_{i_0}$ such that $int(J^+_{i_0})$ is maximal with respect to inclusion. For any $k\neq i_0$ the sets $int(J^+_{i_0})$ and $int(J^+_k)$ are either disjoint or one subset of the other $\mbox{mod }\cL^2$. Being $E$ indecomposable and since $int(J^+_{i_0})$ is maximal with respect to inclusion, we conclude that any set $int(J^+_k)$ is contained in $int(J^+_{i_0})$. From now on we relabel $J^+_{i_0}$ into $J^+_0$.\\
	Now if there exists a curve $J^+_k\neq J^+_0$, by property $iv)$ of Theorem \ref{thm2} we would get some $J^-_j$ such that the set $(int(J^+_0)\sm int(J^-_j))\cup int(J^+_k)$ is decomposable, which contradicts the fact that $E$ is indecomposable. Therefore $\{J^+_i\}_{i\in\N}$ is the singleton $\{J^+_0\}$.\\
	Finally by property $v)$ of Theorem \ref{thm2} we get that $|int(J^-_i) \cap int(J^-_k)|=0$ for any $i\neq k$.
\end{proof}

\begin{lemma} \label{lem2}
	Suppose $E\con\R^2$ is indecomposable with $0<|E|<+\infty$. Suppose also that the family of Jordan curves $\{J^+_0,J^-_i\}$ with non-trivial interior given by Lemma \ref{lem1} is finite. Then $E$ is essentially bounded and
	\beq \label{eq2}
		 E=\bigg\{ p\in\R^2\sm((J^+_0)\cup_i (J^-_i))\,\,|\,\, Ind_{J^+_0}p+\sum_i Ind_{J^-_i}p \equiv 1 \mbox{ mod }2  \bigg\} \quad\mbox{mod }\cH^1.
	\eeq
	In particular $E$ is equivalent to an open set. Moreover, using the representative of $E$ in \eqref{eq2}, we have that
	\beq
		P(E)=\cH^1(\pa E).
	\eeq
\end{lemma}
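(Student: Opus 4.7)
My plan is to reduce the description of $E$ to classical Jordan-curve topology, identify each measure-theoretic interior with the bounded component of a Lipschitz Jordan curve, and then translate the set-theoretic decomposition into the parity formula \eqref{eq2}. Essential boundedness is immediate from Remark \ref{rem-essbound}, since $E$ is indecomposable with $|E|<+\infty$. By Lemma \ref{lem1}, after dropping the trivial Jordan curves we have
\[
    E = \text{int}(J^+_0) \sm \bigcup_i \text{int}(J^-_i) \quad \mbox{mod }\cL^2,
\]
with the $\text{int}(J^-_i)$ pairwise disjoint $\mbox{mod }\cL^2$ and each contained in $\text{int}(J^+_0)$ $\mbox{mod }\cL^2$. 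By Proposition \ref{prop1}, every Jordan boundary $J^\pm_i$ coincides $\mbox{mod }\cH^1$ with the image of a Lipschitz Jordan curve $\gamma^\pm_i$, and by uniqueness of the interior together with the Jordan curve theorem, $\text{int}(J^\pm_i)$ agrees $\mbox{mod }\cL^2$ with the bounded topological component of $\R^2\sm(\gamma^\pm_i)$, i.e., with $\{p\notin(\gamma^\pm_i):\ind_{\gamma^\pm_i}(p)=\pm 1\}$.

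Call $\tilde E$ the right-hand side of \eqref{eq2}. Off the curves, the parity of the index sum equals $1$ exactly at those points that sit in the bounded component of $\gamma^+_0$ and in an even number (possibly zero) of those of the $\gamma^-_i$. By the nesting and pairwise disjointness from Lemma \ref{lem1}, the configurations that would break the equality between $\tilde E$ and $\text{int}(J^+_0)\sm\bigcup_i\text{int}(J^-_i)$ occur only on $\cL^2$-null subsets of the (finitely many) bounded topological components, which, once we work with the representative furnished by the Lipschitz Jordan curves, are also $\cH^1$-null off the curves themselves. This yields the desired identity modulo $\cH^1$. Openness of $\tilde E$ is then immediate because each index $\ind_{\gamma^\pm_i}$ is locally constant on the complement of its image, so the parity condition is locally constant on the open set $\R^2\sm((J^+_0)\cup\bigcup_i(J^-_i))$.

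For the last assertion $P(E)=\cH^1(\pa\tilde E)$, both inequalities are short. Since $\tilde E$ and its complement in $\R^2\sm((J^+_0)\cup\bigcup_i(J^-_i))$ are both open, one has $\pa\tilde E\con(J^+_0)\cup\bigcup_i(J^-_i)$, and Theorem \ref{thm2} gives $\cH^1(\pa\tilde E)\le \cH^1(J^+_0)+\sum_i\cH^1(J^-_i)=P(E)$. Conversely, since $\tilde E$ is open, every $p\in\cF\tilde E$ has density $1/2$ and therefore lies neither in $\tilde E$ (density $1$) nor in the interior of its complement (density $0$); hence $\cF\tilde E\con\pa\tilde E$ and $P(\tilde E)=\cH^1(\cF\tilde E)\le\cH^1(\pa\tilde E)$. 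The main technical subtlety I expect is in the first step: one must pass from the $\mbox{mod }\cL^2$ relations of Lemma \ref{lem1} to a $\mbox{mod }\cH^1$ identity by carefully selecting the canonical topological representatives coming from the Lipschitz Jordan parametrizations of Proposition \ref{prop1}.
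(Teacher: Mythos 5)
Your proof is correct. The first part (essential boundedness, the index/parity characterization, and openness) follows essentially the same path as the paper: both pass through Lemma \ref{lem1}, the Lipschitz Jordan parametrizations from Proposition \ref{prop1}, and the observation that each index is locally constant off the curves. Where you genuinely depart is in the final identity $P(E)=\cH^1(\pa E)$. The paper first proves that $\cH^1((J^-_i)\cap(J^-_k))=0$ for $i\neq k$ and $\cH^1((J^-_i)\cap(J^+_0))=0$ by a contradiction argument involving maximality of M-connected components, and then invokes Lemma 2.8 of \cite{Po19} to conclude that $\cH^1\res\cF E = \cH^1\res(J^+_0\cup\bigcup_i J^-_i)$, from which equality with $\cH^1(\pa E)$ follows. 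You instead observe the cheaper two-sided sandwich: since $\tilde E$ and its complement in $\R^2\sm((J^+_0)\cup\bigcup_i(J^-_i))$ are both open, $\pa\tilde E\subset(J^+_0)\cup\bigcup_i(J^-_i)$, whence $\cH^1(\pa\tilde E)\le \cH^1(J^+_0)+\sum_i\cH^1(J^-_i)=P(E)$ by subadditivity and Theorem \ref{thm2}~vi); conversely $P(E)=P(\tilde E)=\cH^1(\cF\tilde E)\le\cH^1(\pa\tilde E)$ since $\cF\tilde E\subset\pa^*\tilde E\subset\pa\tilde E$. This avoids the curve-intersection lemma and the external reference entirely, which is a genuine simplification; on the other hand, the paper's route yields the sharper structural statement that the Jordan curves are $\cH^1$-essentially disjoint and that their union carries $\cH^1\res\cF E$ exactly, information it reuses implicitly elsewhere. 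Both arguments are valid; yours is more self-contained for the purposes of this single lemma.
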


\begin{proof}
	Denoting by $J^\pm_i$ also a constant velocity Lipschitz parametrization of $(J^\pm_i)$ for any $i\ge0$, one has that the set $\{p\in\R^2\sm(J^\pm_i)\,|\,\ind_{J^\pm_i}(p)\equiv1 \mbox{ mod }2\}$ is a representative for $int(J^\pm_i)$. Hence writing $E=int(J^+_0)\sm\cup_{i\ge1} int(J^-_i)$ by Lemma \ref{lem1}, using also Remark \ref{rem1}, \eqref{eq2} immediately follows.\\
	Now we observe that, using the notation of Lemma \ref{lem1}, we have that $\cH^1((J^-_i)\cap (J^-_k))=0$ for any $i\neq k$. In fact if by contradiction we assume that $\cH^1((J^-_i)\cap (J^-_k))>0$ for some $i\neq k$, since under our hypotheses the holes are simple sets, we would have from \cite{AmCaMaMo01} that $U=int(J^-_i)\cup int(J^-_k)$ is indecomposable (hence M-connected). Thus $U$ would be a hole of $E$, but this contradicts the uniqueness of the decomposition of Theorem \ref{thm2}. Similarly we conclude that $\cH^1((J^-_i)\cap (J^+_0))=0$. Then we can use Lemma 2.8 in \cite{Po19} to get that
	\[ 
		\cH^1\res \cF E = \cH^1 \res J^+_0 + \sum_i \cH^1\res J^-_i = \cH^1\res \bigg( J^+_0\cup\bcup_i J^-_i \bigg).
	\]
	Since $J^+_0\cup\bcup_i J^-_i$ is closed, it coincides with $\pa E$, and hence we have $\cH^1(\cF E)=P(E)=\cH^1(\pa E)$.
\end{proof}

\begin{lemma}\label{lem3}
	Suppose $E\con\R^2$ is indecomposable with $0<|E|<+\infty$. Then
	\beq
		2\diam E^1 \le P(E).
	\eeq 
\end{lemma}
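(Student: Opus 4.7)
The plan is to reduce the inequality to the classical fact that a Jordan curve of length $L$ bounds a region of diameter at most $L/2$. By Remark \ref{rem-essbound} and Lemma \ref{lem1}, the hypotheses on $E$ yield that $E$ is essentially bounded and, modulo $\cL^2$, admits the representation $E = int(J_0^+)\sm\bigcup_i int(J_i^-)$ for a unique outer Jordan boundary $J_0^+$. By Proposition \ref{prop1}, $J_0^+$ is parametrized by a Lipschitz Jordan curve $\ga:[0,L]\to\R^2$ of length $L=\cH^1(J_0^+)$, and I would fix an open bounded representative $U$ of $int(J_0^+)$ whose topological boundary equals $(\ga)=J_0^+$.

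The next step is to prove $E^1\con\overline U$ and derive $\diam E^1\le\diam J_0^+$. The inclusion follows from $|E\sm U|=0$: the density comparison $|E\cap B_r(x)|\le|U\cap B_r(x)|$ gives $E^1\con U^1$, and $U^1\con\overline U$ is immediate for open sets. Then for $x,y\in E^1\con\overline U$, the line through $x$ and $y$ intersects the compact set $\overline U$ in a closed segment, whose endpoints $p,q$ must lie on $\pa U=J_0^+$ by extremality and openness of $U$; since $x,y$ lie between $p$ and $q$, $|x-y|\le|p-q|\le\diam J_0^+$.

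Finally, the inequality $\cH^1(J_0^+)\ge 2\diam J_0^+$ follows from the Lipschitz parametrization: any two points $\ga(s),\ga(t)$ split $J_0^+$ into two rectifiable arcs joining them, of total length $L$, each of length at least $|\ga(s)-\ga(t)|$. Combined with Theorem \ref{thm2}(vi), which gives $P(E)=\cH^1(J_0^+)+\sum_i\cH^1(J_i^-)\ge\cH^1(J_0^+)$, one obtains the chain
\[
2\diam E^1\le 2\diam J_0^+\le \cH^1(J_0^+)\le P(E).
\]
The main subtlety I expect is the set-theoretic identification of $\pa U$ with the image of $\ga$, which follows from the Jordan curve theorem applied to the Lipschitz parametrization, in the same spirit as arguments already used in the proof of Lemma \ref{lem2}; everything else is a straightforward application of the structural results from Subsection \ref{sub1}.
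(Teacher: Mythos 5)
Your proposal is correct and follows essentially the same route as the paper: reduce to the outer Jordan boundary $J_0^+$ (the paper works with $\Gamma = J_0^+$ via $sat(E)$, which is the same object), use the elementary fact that a closed rectifiable curve of length $L$ bounds a region of diameter at most $L/2$, and conclude from $P(E)\ge\cH^1(J_0^+)$. The only difference is cosmetic: the paper asserts $\diam E^1=\diam F^1=\diam F$ for $F=sat(E)$ without comment, while you carefully prove only the inequality $\diam E^1\le\diam J_0^+$ via the density argument $E^1\con U^1\con\overline U$ and the extremal-segment observation — which is all that is needed and is arguably cleaner than the paper's unjustified equality.
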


\begin{proof}
	By Lemma \ref{lem1} we can write $E=int(J^+_0)\sm \cup_i int(J^-_i)$. We know from \cite{AmCaMaMo01} that the set $F=sat(E)$ is simple, so we can identify it with $int(\Ga)$ for a Jordan Lipschitz curve $\Ga$ with $P(F)=\cH^1(\Ga)$. Also $int(\Ga)= \{p\in\R^2\sm\Ga\,\,|\,\, Ind_\Ga p=1 \}$. By construction $\Ga=J^+_0$, thus we have that $\diam E^1=\diam F^1=\diam F$ and for any $x\neq y$ with $x,y\in F$ it holds that
	\[ P(E)\ge P(F)=\cH^1(\Ga)\ge 2|x-y|. \]
	Passing to the supremum on $x\neq y$ with $x,y \in F$ we get the estimate.
\end{proof}

\noindent For the convenience of the reader, we finally recall here a useful result.

\begin{thm}[\cite{Sc15}] \label{thmSchmidt}
	Let $Y$ be an open bounded set in $\R^d$ such that $P(Y) = \cH^{d-1}(\pa Y)$. Then for every $\delta>0$ there exists a smooth set $Y_\delta$ satisfying:\\
		i) $Y_\delta \subset Y$,\\
		ii) $Y \sm Y_\delta \subset \mathcal{N}_\delta(\pa Y) \cap \mathcal{N}_\delta(\pa Y_\delta)$,\\
		iii) $P(Y_\delta) \le P(Y) + \delta$,\\
	with $\mathcal{N}_\delta(A) = \{ x \in \R^d \ | \ d(x,A)< \delta \}$ for any set $A$.
\end{thm}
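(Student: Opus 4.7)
The plan is to combine an inner-parallel approximation of $Y$ with mollification and a Sard-type level set selection. The hypothesis $P(Y)=\cH^{d-1}(\pa Y)$ is the crucial ingredient that controls perimeters during the inner approximation: it forces $\cH^{d-1}(\pa Y\sm \pa^*Y)=0$, which rules out the pathologies (e.g.\ interior slits) where inner-parallel sets would have perimeter much larger than $P(Y)$.

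Concretely, set $V_r=\{x\in\R^d\,:\,d(x,Y^c)>r\}$. Since $Y$ is open and bounded, $V_r\nearrow Y$ and $|Y\sm V_r|\to 0$ as $r\searrow 0$. The first, technical step is to produce a sequence $r_n\searrow 0$ with $P(V_{r_n})\to P(Y)$. I would apply the coarea formula to the Lipschitz function $f(x)=d(x,Y^c)$, noting $|\nabla f|\le 1$ and $\pa V_r\subset f^{-1}(r)$; the identity $\int_0^\infty \cH^{d-1}(f^{-1}(r))\,dr=\int_Y|\nabla f|\,dx$ combined with the hypothesis $\cH^{d-1}(\pa Y\sm \pa^*Y)=0$ and the De Giorgi blow-up structure of $\pa^*Y$ allows one to show that the slices $f^{-1}(r)$ well-approximate $\pa^*Y$ in $\cH^{d-1}$ along suitable radii $r_n\searrow 0$.

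Next, mollify. Let $\rho_\eta$ be a standard smooth kernel with $\supp\rho_\eta\subset B_\eta$, and set $u_{\eta}=\rho_\eta*\chi_{V_{r_n}}$ for $\eta<r_n$. Then $u_\eta$ is smooth, $\supp u_\eta\subset V_{r_n}+B_\eta\subset V_{r_n-\eta}\subset Y$, and the standard convolution estimate gives $\int_{\R^d}|\nabla u_\eta|\le P(V_{r_n})$. Applying the coarea formula to $u_\eta$,
\[
\int_0^1\cH^{d-1}(u_\eta^{-1}(t))\,dt \,=\, \int_{\R^d}|\nabla u_\eta|\,dx \,\le\, P(V_{r_n}),
\]
and by Sard's theorem almost every $t\in(0,1)$ is a regular value of $u_\eta$. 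A mean-value selection then produces a regular $t^*\in(0,1)$ with $P(\{u_\eta>t^*\})\le P(V_{r_n})+\de/2\le P(Y)+\de$ for $r_n$ sufficiently small by Step 1. The set $Y_\de:=\{u_\eta>t^*\}$ is smooth, contained in $V_{r_n-\eta}\subset Y$, and has the desired perimeter bound. Property (ii) follows by tuning $r_n$, $\eta$, $t^*$ so that both the outer shell $Y\sm V_{r_n-\eta}$ (automatically in a tubular neighborhood of $\pa Y$) and the intermediate annulus $V_{r_n-\eta}\sm Y_\de$ (close to $\pa Y_\de$ by continuity of $u_\eta$ across the level $t^*$) lie inside the required $\de$-neighborhoods of $\pa Y$ and $\pa Y_\de$.

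The principal obstacle is the very first step, namely producing a sequence of inner-parallel sets whose perimeters converge to $P(Y)$. This step fails without the hypothesis: a disk with a radial slit is the canonical counterexample, since its inner-parallel set opens the slit into a thin channel whose perimeter counts both walls, yielding $\liminf P(V_r)>P(Y)$. Making this step rigorous requires a careful combination of the coarea formula for $d(\cdot,\pa Y)$ with the equality $\cH^{d-1}(\pa Y\sm\pa^*Y)=0$ and a Fatou-type selection of good radii; once past this point, the mollification, coarea and Sard-based level set extraction are essentially routine.
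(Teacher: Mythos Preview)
The paper does not prove this theorem: it is quoted from Schmidt~\cite{Sc15} and used as a black box (in Theorem~\ref{thmsmoothsets} and Proposition~\ref{propapprox}). There is therefore no ``paper's own proof'' to compare against; your sketch is an attempt to reconstruct Schmidt's argument, which the present paper deliberately outsources.

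On the substance of your sketch: the overall architecture (inner-parallel sets, mollification, coarea plus Sard to extract a smooth level set) is indeed the strategy of~\cite{Sc15}, and your steps after the first are essentially routine and correct. The genuine gap is exactly where you locate it, in Step~1, but your proposed justification does not close it. The coarea identity
\[
\int_0^\infty \cH^{d-1}(f^{-1}(r))\,dr \;=\; \int_Y |\nabla f|\,dx \;=\; |Y|
\]
only tells you that almost every level set has finite $\cH^{d-1}$-measure; it gives no control on $\liminf_{r\to 0}\cH^{d-1}(f^{-1}(r))$ in terms of $P(Y)$. A ``Fatou-type selection of good radii'' cannot extract from an integral bound over $(0,\infty)$ a pointwise bound near $r=0$. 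What is actually needed is a local argument: at $\cH^{d-1}$-a.e.\ point of $\pa Y$ the hypothesis forces $\pa Y$ to coincide with $\pa^*Y$, so by De~Giorgi's blow-up the set $Y$ is asymptotically a half-space there, and the level sets of the distance function are asymptotically flat and of the right measure; one then needs a covering/localization argument to globalize. This is the substantive content of~\cite{Sc15}, and your sketch does not supply it.

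A secondary gap: your verification of~(ii), specifically the inclusion $Y\setminus Y_\delta\subset\cN_\delta(\pa Y_\delta)$, is asserted but not argued. If $Y$ has a thin finger that is entirely discarded by $Y_\delta$, points near the tip of the finger are close to $\pa Y$ but may be far from $\pa Y_\delta$, which lives near the base. One needs to argue that such fingers have controlled length, which again comes back to the hypothesis $\cH^{d-1}(\pa Y\setminus\pa^*Y)=0$.
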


\begin{remark}\label{rem2}
	As long as a finite perimeter set $E$ is equivalent to an open set $Y$ satisfying $P(Y) = \cH^{d-1}(\pa Y)$, then Theorem \ref{thmSchmidt} is applicable. In particular one can apply Theorem \ref{thmSchmidt} in any of the following cases:\\
	i) $E$ simple with $|E|<+\infty$ (by Proposition \ref{prop1}),\\
	ii) $E$ indecomposable with $|E|<+\infty$ with a finite number of holes (by Lemma \ref{lem2}).
\end{remark}

\textcolor{white}{.}

\subsection{The Steiner problem} We provide some basic definitions and results that we will use in the sequel.

\begin{defn}
	Let $K$ be a compact subset of $\R^2$. The Steiner problem associated with $K$ is the optimization problem
	\begin{equation} \label{steinerinf}
	\sigma(K) = \min \{ \cH^1(S),\; \ K\cup S \text{ is connected} \}.
	\end{equation}
	$\sigma(K)$ is called the Steiner length of $K$.
\end{defn}

\noindent It must be emphasized that the infimum in \eqref{steinerinf} is a minimum. We collect below some definitions and qualitative properties of the solutions to the Steiner problem, see~\cite{PaSt13} .

\begin{defn}
		We say that $S\con\R^2$ is a \emph{tree} if $S$ is an unoriented planar graph without loops composed of a set $V$ of vertices and a set $A$ of disjoint segments with endpoints in $V$. The \emph{degree} of a vertex $v\in V$ is the number of edges incident to $v$ (possibly equal to $+\infty$).\\
		A vertex with degree $1$ is called \emph{endpoint}. A vertex with degree $>1$ is called \emph{branching point}. A vertex with degree $3$ is called \emph{triple joint}.\\
		The set $S$ is a \emph{finite tree} if $V$ is finite (i.e., $S$ has a finite number of connected components and branching points).
\end{defn}

\noindent We recall the following results which have been proved in \cite[Theorems 5.1, 7.6, 7.4, 7.3]{PaSt13}.

\begin{thm} 
	Let $K\con\R^2$ be a compact set and let $S$ be a minimizer of \eqref{steinerinf} such that $\cH^1(S) < +\infty$. Then\\
		i) $K\cup S$ is compact,\\
		ii) $S\sm K $ has at most countably many connected components and each of them has positive $\cH^1$ measure,\\
		iii) $S$ contains no loops,\\
		iv) the (topological) closure of every connected component of $S$ is a tree with endpoints on $K$, with at most one endpoint on each connected component of $K$,\\
		v) $S\sm \cN_\ep(K)$ is a finite tree for almost every $\ep>0$,\\
		vi) if $K$ is finite, then $S$ is a finite tree and every vertex is either a point of $K$ or a triple joint.
\end{thm}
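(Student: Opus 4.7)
The plan is to exploit minimality of $S$ throughout: for each clause, the violation of the stated property should yield a strictly shorter competitor $S'$ with $K \cup S'$ still connected, contradicting the minimality of $S$. A running tool will be the fact that a connected set of finite $\cH^1$-measure has diameter at most its length and that passing to the closure does not increase $\cH^1$ for sets of finite length, so we may always assume $S$ is closed.

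For (i), each connected component of $S$ must intersect $K$, for otherwise it could be dropped. Since $K$ is compact and the sum of diameters of the components is bounded by $\cH^1(S)<+\infty$, the union $K\cup S$ is bounded and closed, hence compact. For (ii), countability of components with positive $\cH^1$-measure is automatic, since any family of pairwise disjoint sets of positive measure in the finite measure space $(S,\cH^1\res S)$ is countable. A component of $S\sm K$ reduced to a single point would be isolated in $S$ and could be removed while preserving both connectivity (the removed point cannot be a cut point of $K\cup S$ since it lies off $K$ and has no other $S$-points nearby) and strictly reducing $\cH^1$, which contradicts minimality; so every component has positive $\cH^1$.

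For (iii) a loop in $S$ would contain an open subarc whose complement in $S$ still produces the same connectivity with $K$, so removing it contradicts minimality; this uses rectifiability of $S$ to choose a subarc locally disjoint from the rest of $S$. Item (iv) then follows: each connected component of $S$ is compact, connected, of finite $\cH^1$, and loop-free, hence a topological tree. An endpoint lying off $K$ could be trimmed by shortening along its incident edge, again contradicting minimality. If two distinct endpoints of a tree-component $T\con S$ landed on the same connected component $C$ of $K$, then one of the two paths they determine in $T$ could be excised: $K\cup S$ minus that path would still be connected through $C$, contradicting minimality. For (v) the coarea inequality applied to $f(x)=\dist(x,K)$ gives $\int_0^\infty \sharp(S\cap f^{-1}(\ep))\,d\ep\le\cH^1(S)<+\infty$, so for a.e. $\ep>0$ the slice $S\cap\pa \cN_\ep(K)$ is finite. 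Cutting the tree-components of $S$ at these finitely many points yields finitely many subtrees, which together form $S\sm\cN_\ep(K)$.

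For (vi), with $K$ finite, the tree $S$ has no "endpoints" outside $K$ by (iv), so every non-$K$ vertex has degree $\ge 2$; a degree-two point can be smoothed away into a single edge and hence is not a genuine branching point. The crucial step is to show that any non-$K$ branching point has degree exactly $3$ and that branching points cannot accumulate. For this one performs a local comparison on a small disk $B_\rho(v)$ centered at a branching point $v$: the restriction $S\cap B_\rho(v)$ minimizes length among connections of the finitely many boundary points $S\cap\pa B_\rho(v)$, and the classical Steiner minimizer on a finite planar point set has only triple joints with $120^\circ$ angles; any higher-degree vertex would be strictly beaten. A compactness/measure argument (each triple joint consumes a definite amount of length in a neighborhood independent of $v$, by the $120^\circ$ local structure) then rules out accumulation and gives finitely many triple joints and edges, hence a finite tree. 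The main obstacle I anticipate is this local regularity step for (vi): verifying carefully that locally minimizing Steiner configurations must be triple joints with $120^\circ$ angles, and quantifying the non-accumulation of such joints; a secondary delicate point is ensuring in (iii)--(iv) that excised subarcs are genuinely locally disjoint from the rest of $S$, which relies on $1$-rectifiability and the a.e. existence of approximate tangents.
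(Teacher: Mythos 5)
The paper does not prove this theorem: it is recalled verbatim from Paolini--Stepanov \cite{PaSt13} (the text explicitly cites Theorems 5.1, 7.6, 7.4, 7.3 there), and the present paper uses it as a black box. So there is no ``paper's own proof'' to compare against; your sketch is really being measured against the Paolini--Stepanov arguments.

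Your outline follows the standard ``cut-and-compare'' paradigm that Paolini--Stepanov also use, and the broad strategy for each item is sound. A few points, however, are genuinely incomplete rather than merely compressed. In (i), the claim that every connected component of $S$ meets $K$ is not automatic: a priori one must first argue that $S$ may be taken closed (via $\cH^1(\overline{S})=\cH^1(S)$ for connected sets of finite length), then use that a connected component of $S$ disjoint from $K$ is compact (diameter bounded by length) and at positive distance from $K\cup(S\sm C)$, and only then conclude it is clopen in $K\cup S$, contradicting connectedness; you jump to ``could be dropped'' without this separation argument. In (iii)--(iv), the removal of an open subarc of a loop or of a path between two endpoints on the same component of $K$ requires showing that the excised arc is not needed for the connectivity of the remainder; this is not purely a rectifiability issue but needs the loop-free tree topology to control which arcs are essential, and you flag it yourself as delicate but leave it unresolved. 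Most seriously, (vi) is essentially asserted: the local blow-up and calibration argument showing that a Steiner minimizer of a finite point set has only $120^\circ$ triple joints, and the quantitative lower bound on length per triple joint that prevents accumulation, are the actual content of the theorem, and your sketch names the needed facts without proving them. As a blind reconstruction of the statement's provenance your plan is reasonable, but you should be aware that within this paper the result is a cited external theorem and the heavy lifting (especially (v) and (vi)) lives in \cite{PaSt13}.
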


\begin{defn} \label{def1}
	Let $E$ be an essentially bounded set of finite perimeter in $\R^2$ such that $\pa E = \pa^* E  \mbox{ mod } \cH^1$. Let $S$ be a Steiner tree for $\overline{E^1}$ and $S^c$ a Steiner tree for $\overline{E^0}$. We denote $St(E)=\cH^1(S)$ and $St_c(E) = \cH^1(S^c)$.
	\end{defn}

\begin{remark}
	Since in the above definition the set $\overline{E^0}$ is not compact, the Steiner problem on $\overline{E^0}$ is defined on the compact set $\overline{B_R(0)}\cap \overline{E^0}$ for $R$ sufficiently large so that $|E\sm B_{\frac{R}{2}}(0)|=0$. The quantity $St_c(E)$ is clearly independent of the choice of any such $R$.
\end{remark}
\textcolor{white}{.}


\section{Equivalence of the relaxations}

\noindent Recalling the definitions seen in the introduction of $P_C$, $P_C^r$, $P_S$, $P_S^r$, and their associated  $L^1$-relaxations, we now prove the following result:

\begin{thm} \label{thmsmoothsets}
	Let $E\con\R^2$ be an essentially bounded set with finite perimeter. It holds that
	\beq
	 \overline{P_C^r}(E) = \overline{P_C}(E), \qquad  \qquad \overline{P_S^r}(E) = \overline{P_S}(E) .
	\eeq
\end{thm}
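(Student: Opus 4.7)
The plan is to split the statement into an easy direction and a hard direction. The easy direction $\overline{P_C}(E) \le \overline{P_C^r}(E)$ (and its simply connected analogue) is immediate from the pointwise bounds $P_C \le P_C^r$ and $P_S \le P_S^r$, since a smooth connected set is indecomposable and a smooth simply connected set is simple. For the reverse inequalities I would prove the following \emph{smooth approximation claim} (which is exactly items (i)--(ii) of Theorem \ref{thmA}): every essentially bounded indecomposable (resp.\ simple) set $F$ of finite perimeter is the $L^1$-limit of smooth connected (resp.\ smooth simply connected) sets $F_\delta$ with $P(F_\delta) \to P(F)$. Given this, a recovery sequence $E_n \to E$ in $L^1$ with $P(E_n) \to \overline{P_C}(E)$ and $E_n$ indecomposable (essentially bounded by Remark \ref{rem-essbound}) can be diagonalized against the claim to produce smooth connected sets realizing $\overline{P_C^r}(E) \le \overline{P_C}(E)$; the simply connected case is identical.

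To prove the claim I would first reduce to $F$ having \emph{finitely many holes}. By Lemma \ref{lem1} write $F = int(J_0^+) \sm \bigcup_{i\ge 1} int(J_i^-)$ mod $\cL^2$ and set $F_N = int(J_0^+) \sm \bigcup_{i=1}^N int(J_i^-)$. Each $F_N$ is indecomposable with finitely many holes, $F_N \to F$ in $L^1$ (since the $int(J_i^-)$ are pairwise disjoint in $int(J_0^+)$, so $\sum_i |int(J_i^-)| < +\infty$) and $P(F_N) \to P(F)$ by Theorem \ref{thm2} vi). If $F$ is simple no truncation is needed. On such a set, Lemma \ref{lem2} and Remark \ref{rem2} provide a bounded open representative $Y$ satisfying $P(Y) = \cH^1(\pa Y)$, to which Schmidt's Theorem \ref{thmSchmidt} applies, yielding a smooth open $Y_\delta \subset Y$ with $Y_\delta \to Y$ in $L^1$ and $P(Y_\delta) \to P(Y)$. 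I would then let $C^\star_\delta$ be the connected component of $Y_\delta$ of largest $\cL^2$-measure and define $F_\delta := C^\star_\delta$ in the indecomposable case, and $F_\delta := sat(C^\star_\delta)$ in the simple case, which is smooth simply connected. Crucially in the simple case, $F^c$ is a single unbounded connected piece of $\R^2$ and $C^\star_\delta \subset F$ mod null, so $F^c$ lies in the unbounded component of $(C^\star_\delta)^c$, and hence $sat(C^\star_\delta) \subset F$ mod null, so no spurious mass is introduced by the saturation. After proving the claim for $F_N$, a diagonal extraction on $(N,\delta)$ gives the conclusion for $F$.

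The main obstacle, and the only nontrivial step, is proving $|C^\star_\delta| \to |F|$. I would argue by contradiction: suppose $R_\delta := Y_\delta \sm C^\star_\delta$ satisfies $\liminf_{\delta\to 0} |R_\delta| > 0$. Since $C^\star_\delta$ and $R_\delta$ are smooth, disjoint, and bounded, one has the exact additivity $P(Y_\delta) = P(C^\star_\delta) + P(R_\delta)$, so both perimeters are uniformly bounded. BV-compactness extracts (along a subsequence) $L^1$-limits $F_1,F_2$ of positive measure with $F_1 \cup F_2 = F$ mod $\cL^2$. Lower semicontinuity of the perimeter together with the general subadditivity $P(F_1 \cup F_2) \le P(F_1) + P(F_2)$ gives
\[
P(F) \le P(F_1) + P(F_2) \le \liminf_{\delta \to 0}\bigl(P(C^\star_\delta) + P(R_\delta)\bigr) = \liminf_{\delta \to 0} P(Y_\delta) = P(F),
\]
so $P(F) = P(F_1) + P(F_2)$ with $|F_1|, |F_2| > 0$, contradicting indecomposability of $F$. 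Once $|C^\star_\delta| \to |F|$ is established, $L^1$-convergence $F_\delta \to F$ follows from $C^\star_\delta \subset F_\delta \subset F$ mod null and the measure convergence, while $P(F_\delta) \to P(F)$ follows from $P(F_\delta) \le P(C^\star_\delta) \le P(Y_\delta) \to P(F)$ combined with the lower semicontinuity of the perimeter. This completes the plan.
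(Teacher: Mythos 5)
Your proposal is correct and takes a genuinely different route from the paper in the key construction. Both proofs use the same reduction (it suffices to approximate an indecomposable/simple set $F$ by smooth connected/simply connected sets with converging perimeters) and both invoke Schmidt's Theorem \ref{thmSchmidt}. But where the paper \emph{constructs} a connected approximant by hand --- writing $E = Y_0 \sm \bigcup_j T_j$, approximating the saturation $Y_0$ from \emph{outside}, approximating the large holes from \emph{inside}, discarding the small holes, and then running a careful series-remainder estimate to show $L^1$-convergence --- you apply Schmidt directly from within to the truncation $F_N$, accept that $Y_\delta$ may be disconnected, select its largest component $C^\star_\delta$, and prove a posteriori by compactness and contradiction that $|C^\star_\delta| \to |F_N|$. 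Your contradiction argument (additivity of perimeter on separated smooth components, BV compactness, subadditivity $P(F_1\cup F_2)\le P(F_1)+P(F_2)$, indecomposability) is clean and sidesteps the paper's bookkeeping entirely; the paper's construction is more explicit but requires the two-sided Schmidt trick and the $\ep$-versus-$\ep^2$ hierarchy. Your treatment of the simple case (take $sat(C^\star_\delta)$ and verify via connectedness of $F^c$ that no spurious mass is added) is also cleaner than the paper's selection of ``the component containing $E\sm\cN_\ep(\pa E)$'', which implicitly assumes that set is connected.

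One small gap you should plug: you assert without proof that the $L^1$-limit $F_1$ of $C^\star_\delta$ has positive measure. That $|F_2|>0$ follows from the contradiction hypothesis, but $|F_1|>0$ does not follow automatically from ``largest component''. It does hold: if $|C^\star_\delta|\to 0$ then every component $C_i$ of $Y_\delta$ has $|C_i|\le a_\delta\to 0$, and the isoperimetric inequality gives
\[
P(Y_\delta)=\sum_i P(C_i)\ge 2\sqrt{\pi}\sum_i\sqrt{|C_i|}\ge \frac{2\sqrt{\pi}}{\sqrt{a_\delta}}\sum_i|C_i|=\frac{2\sqrt{\pi}\,|Y_\delta|}{\sqrt{a_\delta}}\to+\infty,
\]
contradicting $P(Y_\delta)\le P(F_N)+\delta$. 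With this line added, the argument is complete and is, in my view, a tidier proof of Proposition \ref{prop2} than the one in the paper.
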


\begin{proof}
	Let us start by proving that $\overline{P_S^r}(E) = \overline{P_S}(E)$. By a diagonal argument it is enough to prove that given a simple set $E$, we can approximate $E$ in the $L^1$ sense with a sequence of simply connected smooth sets with perimeter converging to $P(E)$.\\
	So let $E$ be a simple set. We can identify $E$ with the open set $int(J^+)$ where $J^+$ denotes the Jordan boundary of $E$, which is a Lipschitz curve with $\cH^1(J^+)=P(E)$. By Theorem \ref{thmSchmidt} and Remark \ref{rem2} there exists a sequence $E_\ep$ of smooth set such that $P(E_\ep) \le  P(E) + \ep$. Also $E\sm E_\ep \con \cN_\ep(\pa E)\cap \cN(\pa E_\ep)$ and $E$ is simple, then the boundary of any connected component of $E_\ep$ is contained in $\cN_\ep(\pa E)$. Then there exists a connected component $\tilde{E}_\ep$ of $E_\ep$ such that
	\[ E \sm  \mathcal{N}_\ep(\pa E) \subset \tilde{E}_\ep.\]
	The set $F_\ep=sat(\tilde{E}_\ep)$ is a smooth and simply connected set contained in $E$ with $\pa F_\ep\con \cN_\ep(\pa E)$. We have
	\[ P(F_\ep) \le  P(E_\ep) \le P(E) + \ep\]
	and
	\[ E \Delta F_\ep\subset \mathcal{N}_\ep(\pa E) . \]
	Since by Lemma \ref{lem2} we have that $\pa E = \pa^* E  \mbox{ mod } \cH^1$ is rectifiable, using Theorem 3.2.39 in \cite{Fe69} we get
	\[ \lim_{\ep\to 0} \frac{| \mathcal{N}_\ep(\pa E) |}{ 2\ep} =  P(E), \]
	and then $F_\ep \to E$ in $L^1$. By the lower semicontinuity of the perimeter we obtain $P(F_\ep) \to P(E)$.\\

	\noindent We follow a similar strategy in the case of $P_C$. The goal is still to approximate an indecomposable set $E$ with smooth connected sets having perimeter converging to $P(E)$.
	In the notation of Lemma \ref{lem1} we can identify $E$ with
	\[ Y_0 \sm \bigcup_{j \in J} T_j. \]
	The sets $Y_0,T_j$ are simple, bounded, and open for any $j$. Let $\ep>0$, we define:\\
	i) $J_\ep= \{ j \in J \ | \ |T_j|>\ep\} $.\\
	ii) $Y_{0,\ep}$ is an approximation from outside of $Y_0$ constructed as follows. As $Y_0$ is bounded and simple, we can approximate its complement set in some large ball and then perform the approximation from within of such complement as given by Theorem \ref{thmSchmidt} with $\de=\ep$ (see also Remark \ref{rem2}). Adding the complement of the ball, we obtain a smooth set $\tilde{Y}_{0,\ep}$. Taking $Y_{0,\ep} = \R^{2} \sm \tilde{Y}_{0,\ep}$, it holds that\\
		\indent	a) $Y_0 \subset Y_{0,\ep}$,\\
		\indent	b) $Y_{0,\ep	} \sm Y_0 \subset \mathcal{N}_\delta(\pa Y_0) \cap \mathcal{N}_\delta(\pa Y_{0,\ep})$,\\
		\indent	c) $P(Y_{0,\ep}) \le P(Y_0) + \ep$.\\
	iii) $T_{j,\ep}$ the approximation from within given by Theorem \ref{thmSchmidt} together with Remark \ref{rem2} of $T_j$ with $\delta = \ep^2$ for $j \in J_\ep$.\\
	iv) $E_\ep= Y_{0,\ep} \sm   \bigcup_{j \in J_\ep} T_{j,\ep}$.\\
	Since $T_j \subset Y_0$ for any $j$ and the $T_j$'s are essentially disjoint, we have that
	\[ \ep \sharp(J_\ep) \le \sum_{j \in J_\ep} |Y_j| \le \sum_{j \in J} |Y_j| \le |Y_0|. \]
	Since $|Y_0| = |sat(E)|<+\infty$, we get that $ \ep \sharp(J_\ep) \le |sat(E)| < +\infty $.\\
	By the same argument used for $P_S$, we may assume that $Y_{0,\ep}$, $T_{j,\ep}$ are smooth simple sets. Hence $E_\ep$ is smooth and connected. We have that
	\beq \label{eq4} \begin{array}{rcl}
	P(E_\ep) & \le & P(Y_{0,\ep}) +   \sum_{j \in J_\ep} P(T_{j,\ep}) \\
	& \le & P(Y_0) + \ep+   \sum_{j \in J_\ep} \big( P(T_j) + \ep^2 \big) \\
	& \le & P(Y_0) + \ep+   \sum_{i \in J_\ep} P(Y_j) + \ep^2 \sharp(J_\ep) \\
	& \le & P(Y_0) + \ep+   \sum_{i \in J} P(Y_j) + \ep^2 \sharp(J_\ep) \\
	& \le & P(E) + \ep+ \ep^2 \sharp(J_\ep) .
	\end{array} \eeq
	Therefore $ \limsup_{\ep\to 0} P(E_\ep) \le P(E)$. Also
	\beq \label{eq3}
	 |E \Delta E_\ep| = |Y_{0,\ep} \sm Y_0| +
	\sum_{j \in J\sm J_\ep} |T_j| +
	\sum_{j \in J_\ep} |T_j \sm  T_{j,\ep}|,
	\eeq
	where the first term comes from the approximation from outside of $sat(E)=Y_0$, the second from the filled small holes, and the third from the approximation of remaining holes from inside. As $ \sum_{j \in J\sm  J_\ep} |T_j|$ is a rest of the absolutely converging series $ \sum_{j \in J} |T_j| < |Y_0|$, we have that $ \sum_{j \in J\sm  J_\ep} |T_j| \to 0 $ as $\ep\to0$.\\
	Also $ |Y_{0,\ep} \sm  Y_0| \le |\mathcal{N}_\ep(\pa Y_0)| \le 4\ep P(E)$ for $\ep$ small enough by Theorem 3.2.39 in \cite{Fe69}. Then $|Y_{0,\ep} \sm Y_0| \to 0$ when $\ep\to 0$.\\
	Analogously for all $j \in J_\ep$ we have
	\[ 
		| T_j\sm T_{j,\ep} | \le |\mathcal{N}_\ep(\pa T_j)| \le \ep^2 P(E) 
	\]
	for $\ep$ small enough depending on $j$. Moreover $  \sum_{j \in J_\ep} |T_j \sm T_{j,\ep}| \le |sat(E)| < +\infty$ then $  \limsup_{\ep\to 0} \sum_{j \in J_\ep} |T_j \sm T_{j,\ep}| < +\infty$. We denote $\ep_h$ a subsequence such that
	\[ 
		\limsup_{\ep\to 0} \sum_{j \in J_\ep} |T_j \sm T_{j,\ep}| =
		\lim_{h \to +\infty} \sum_{j \in J_{\ep_h}} |T_j \sm T_{j,\ep_h}|.
	 \]
	Since $|T_j\sm T_{j,\ep}|\to0$ for any $j$, we have that for all $\eta>0$ there exists $H>0$ such that for all $h>H$, $  \sum_{j \in J_{\ep_h}\sm J_{\ep_H}} |T_j \sm T_{j,\ep_h}| < \eta/2$. The set $J_{\ep_H}$ is finite and $J_{\ep_H} \subset J_{\ep_h}$, then for $h$ large enough
	\[ 
		\sum_{j \in J_{\ep_H}} |T_j \sm T_{j,\ep_h}| \le4 \ep_h^2 \sharp(J_{\ep_H}) P(E) \le 4\ep_h^2\sharp(J_{\ep_h}) P(E) .
	\]
	Choosing $h$ large enough so that $4\sharp(J_{\ep_h})\ep_h^2 P(E) < \eta/2$ we obtain
	\[ 
		\sum_{j \in J_{\ep_h}} |T_j \sm T_{j,\ep_h}| \le
		\sum_{j \in J_{\ep_H}} |T_j \sm T_{j,\ep_h}|
		+ \sum_{j \in J_{\ep_h} \sm J_{\ep_H}} |T_j \sm T_{j,\ep_h}| < \eta .
	\]
	Then
	\[ 
		\limsup_{\ep\to 0} \sum_{j \in J_\ep} |T_j \sm T_{j,\ep}| =
		\lim_{h \to +\infty} \sum_{j \in J_{\ep_h}} |T_j \sm T_{j,\ep_h}| < \eta .
	\]
	Thus, taking $\eta \to 0$, we have
	\[ 
		\sum_{j \in J_\ep} |T_j \sm T_{j,\ep}| \xrightarrow[\ep\to0]{}  0.
	 \]
	Recalling \eqref{eq3} we conclude that
	\[ 
		|E_\ep\Delta E |  \xrightarrow[\ep\to0]{} 0. 
	\]
	By \eqref{eq4} and by lower semicontinuity of the perimeter we have that
	\[ 
		P(E_\ep)   \xrightarrow[\ep\to0]{} P(E) .
	\]
	By a diagonal argument this completes the proof.
\end{proof}

\noindent From the previous proof we remark that the following approximation results hold.

\begin{prop}\label{prop2}
	Let $E\con\R^2$ be an essentially bounded set with finite perimeter. Then\\
	i) if $E$ is simple, there exists a sequence $E_n$ of smooth simply connected sets such that $E_n\to E$ and $P(E_n)\to P(E)$,\\
	ii) if $E$ is indecomposable, there exists a sequence $E_n$ of smooth connected sets such that $E_n\to E$ and $P(E_n)\to P(E)$.
\end{prop}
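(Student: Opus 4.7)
Both claims are obtained by isolating the approximation constructions already appearing inside the proof of Theorem \ref{thmsmoothsets}; the diagonal argument used there is the only extra ingredient, and the sets it diagonalizes are exactly those produced below.

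For (i), let $E$ be simple and essentially bounded. By Proposition \ref{prop1} and Remark \ref{rem2}, $E$ is equivalent to a bounded open set with $P(E)=\cH^1(\pa E)$, so Theorem \ref{thmSchmidt} applied with $\de=\ep$ yields a smooth $E_\ep\con E$ with $P(E_\ep)\le P(E)+\ep$ and $E\sm E_\ep\con\cN_\ep(\pa E)$. Since the boundary of every connected component of $E_\ep$ sits inside $\cN_\ep(\pa E)$, exactly one such component $\tilde E_\ep$ contains $E\sm\cN_\ep(\pa E)$; its saturation $F_\ep=sat(\tilde E_\ep)$ is smooth, simply connected, contained in $E$, and satisfies $P(F_\ep)\le P(E_\ep)$. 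Rectifiability of $\pa E$ together with the Minkowski content formula (Theorem 3.2.39 in \cite{Fe69}) gives $|E\De F_\ep|\le|\cN_\ep(\pa E)|\to 0$, and lower semicontinuity forces $P(F_\ep)\to P(E)$.

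For (ii), use Lemma \ref{lem1} to write $E=Y_0\sm\bigcup_{j\in J}T_j\mbox{ mod }\cL^2$, with $Y_0$ and every $T_j$ bounded and simple. Approximate $Y_0$ from outside by applying Theorem \ref{thmSchmidt} (with tolerance $\ep$) to the complement of $Y_0$ in a sufficiently large ball, producing a smooth simple $Y_{0,\ep}\supset Y_0$ with $P(Y_{0,\ep})\le P(Y_0)+\ep$. Then approximate from inside, with tolerance $\ep^2$, only the finitely many large holes $J_\ep=\{j\in J:|T_j|>\ep\}$, whose cardinality is bounded by $|Y_0|/\ep$, obtaining smooth simple $T_{j,\ep}\con T_j$. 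Set $E_\ep=Y_{0,\ep}\sm\bigcup_{j\in J_\ep}T_{j,\ep}$; this is smooth and connected because removing a finite number of pairwise disjoint simple holes from a simple set cannot disconnect it. The perimeter telescopes to $P(E_\ep)\le P(E)+\ep+\ep^2\sharp(J_\ep)\le P(E)+\ep(1+|Y_0|)$.

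The main obstacle is the $L^1$-convergence in (ii). The symmetric difference $E\De E_\ep$ splits into three pieces: the outside approximation of $Y_0$ (handled by the Minkowski content estimate as in (i)), the un-filled small holes $\sum_{j\notin J_\ep}|T_j|$ (a tail of the convergent series $\sum_j|T_j|\le|Y_0|$), and the interior losses $\sum_{j\in J_\ep}|T_j\sm T_{j,\ep}|$. The last piece is the delicate one because the Minkowski bound $|T_j\sm T_{j,\ep}|\lesssim\ep^2 P(T_j)$ requires $\ep$ small depending on $j$. One handles it by extracting a subsequence $\ep_h\to 0$ realizing the $\limsup$ and, for a fixed large $H$, splitting $J_{\ep_h}$ into the finite piece $J_{\ep_H}$ (where $\ep_h^2\sharp(J_{\ep_H})\to 0$) and the tail $J_{\ep_h}\sm J_{\ep_H}$ (small for $H$ large, uniformly in $h$, since each $|T_j\sm T_{j,\ep_h}|$ becomes individually small). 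Combined with lower semicontinuity of the perimeter, this gives $P(E_\ep)\to P(E)$.
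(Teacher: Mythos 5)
Your proposal reproduces the paper's own argument: Proposition \ref{prop2} is stated by the authors as a byproduct of the proof of Theorem \ref{thmsmoothsets}, and both of your constructions — the inner approximation plus saturation of one component in the simple case, and the $Y_0 \setminus \bigcup T_j$ decomposition from Lemma \ref{lem1} with outer approximation of $Y_0$, inner approximation of the large holes $J_\ep$, and the three-term splitting of $|E\Delta E_\ep|$ together with a subsequence argument for $\sum_{j\in J_\ep}|T_j\setminus T_{j,\ep}|$ in the indecomposable case — are exactly those the paper uses. The approach is the same; the write-up is merely more condensed.
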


\noindent Putting together Proposition \ref{prop2} with Theorem \ref{thmsmoothsets}, the proof of Theorem \ref{thmA} is completed.

\textcolor{white}{.}


\section{Representation formulas}

\noindent Recalling the definitions of $St$ and $St_c$ given in Definition \ref{def1}, we now prove the main result of this paper.

\begin{thm} \label{thmmain}
	Let $E\con\R^2$ be an essentially bounded set with finite perimeter satisfying $\pa E = \pa^* E  \mbox{ mod } \cH^1$. We have
	\beq \overline{P_C}(E) = P(E) + 2St(E),
	\eeq
	\beq \overline{P_S}(E) = P(E) + 2St(E) + 2St_c(E). 
	\eeq
\end{thm}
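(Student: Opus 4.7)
The plan is to prove each of the two equalities by matching upper (recovery sequence) and lower (liminf) bounds, treating $\overline{P_C}$ in detail and adapting for $\overline{P_S}$. For the upper bound on $\overline{P_C}$ I would fix a minimal Steiner set $S$ for $\overline{E^1}$ (so $\cH^1(S)=St(E)$ and $\overline{E^1}\cup S$ is connected) and define $F_\ep := E \cup \cN_\ep(S)$ for small $\ep>0$. A short topological check shows that $F_\ep$ is an open bounded set of finite perimeter whose appropriate representative is topologically connected, since $\cN_\ep(S)$ is connected and meets every M-connected component of $E$ once $\ep$ is small; hence $F_\ep$ is indecomposable by the remark following the definition of indecomposable set. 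From $|\cN_\ep(S)|\to 0$ one obtains $F_\ep\to E$ in $L^1$; the rectifiability of $S$ and the Minkowski content formula give $P(\cN_\ep(S))\to 2\cH^1(S)$, and subadditivity of the perimeter together with control of $\cH^1(\pa^*E\cap\cN_\ep(S))$ yields $\limsup_\ep P(F_\ep)\le P(E)+2St(E)$, with equality forced by lower semicontinuity. For $\overline{P_S}$ I would additionally pick a minimal Steiner set $S^c$ for $\overline{E^0}\cap\overline{B_R(0)}$ with $R$ large enough that $|E\sm B_{R/2}(0)|=0$ and set $F_\ep := (E\cup\cN_\ep(S))\sm\cN_\ep(S^c)$: removing the tube around $S^c$ joins every hole of $E$ to the unbounded component of $E^c$, so the complement of $F_\ep$ is indecomposable and $F_\ep$ is simple, while the perimeter picks up the additional $2St_c(E)$ in the limit. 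Smoothing via Theorem \ref{thmA} is available if needed.

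For the lower bound, let $E_n\to E$ in $L^1$ be indecomposable with $L:=\lim_n P(E_n)<+\infty$ along a subsequence. By Lemma \ref{lem1} and Proposition \ref{prop1}, $\pa^*E_n$ decomposes modulo $\cH^1$ into a single outer Lipschitz Jordan curve $J^+_{0,n}$ plus countably many inner Lipschitz Jordan curves $J^-_{k,n}$, with $P(E_n)$ equal to the sum of their $\cH^1$-lengths. I would parametrize each Jordan boundary by a constant-speed Lipschitz map on $[0,1]$ (so that its total variation equals its $\cH^1$-measure) and extract by Ascoli--Arzel\`a, with a diagonal procedure on the $k$-index, uniform subsequential limits $\gamma^+$ and $\gamma^-_k$ which are Lipschitz closed curves. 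Lower semicontinuity of length under uniform convergence yields
\[
L\ \ge\ \mathrm{length}(\gamma^+)+\sum_k\mathrm{length}(\gamma^-_k).
\]
An analysis of the images shows that these limit curves together trace $\pa^*E$ modulo $\cH^1$ and additionally cover a compact connected set $T$ joining all M-connected components of $\overline{E^1}$, so $\cH^1(T)\ge St(E)$ by definition of the Steiner length. In the simply connected case each $\pa^*E_n$ reduces to a single Jordan curve, and the adapted argument must account also for bridges joining the holes of $E$ to its unbounded complement, yielding the extra $2St_c(E)$.

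The main obstacle will be making the ``multiplicity two'' heuristic rigorous: each thin bridge of $E_n$ is enclosed on both sides by the outer Jordan curve, so in the limit the parametrization $\gamma^+$ must cover the bridge set $T\sm\pa^*E$ with integer multiplicity at least $2$. I would formalize this by studying the multiplicity function $\theta(x):=\sharp\,(\gamma^+)^{-1}(x)$ and proving $\theta\ge 2$ $\cH^1$-a.e.\ on $T\sm \pa^*E$, from which $\mathrm{length}(\gamma^+)\ge \cH^1(\pa^*E)+2\cH^1(T)$. The natural tools are Golab's semicontinuity theorem for $\cH^1$ on connected compact sets (which yields a 1-rectifiable Hausdorff limit of $\pa E_n$) together with careful bookkeeping of how pieces of $\pa^*E_n$ collapse: onto $\pa^*E$ they do so without doubling, while onto $T$ they must double because each thin bridge separates two topologically distinct sides of the outer curve. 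Once the multiplicity-two estimate is in place, combining with $\cH^1(T)\ge St(E)$ gives $L\ge P(E)+2St(E)$ and completes the proof.
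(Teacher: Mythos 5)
Your lower bound outline follows essentially the same route as the paper: extract uniformly convergent Lipschitz reparametrizations of the Jordan boundaries of $E_n$ (via Lemma \ref{lem1} and Proposition \ref{prop1}), pass to the multiplicity function $\theta$, prove $\theta\ge 2$ $\cH^1$-a.e.\ on the portion that collapses off $\pa^* E$, and argue that the collapsed set connects $\overline{E^1}$ so that its length dominates $St(E)$. This is precisely what Lemmas \ref{lem4}, \ref{lem4'}, \ref{lem4"}, \ref{lem5} establish, so you have identified the right technical skeleton for the liminf inequality.

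The upper bound, however, has a genuine gap. You propose to apply the direct construction $F_\ep = E\cup\cN_\ep(S)$ (and its $S^c$ variant) \emph{to a general} $E$ with $\cH^1(\pa E\Delta\pa^*E)=0$. Such an $E$ can have countably many $M$-connected components and countably many holes, and the associated Steiner tree $S$ is then in general \emph{not} a finite tree: it may have infinitely many connected components and branching points. The crucial estimate you invoke, namely $P(\cN_\ep(S))\to 2\cH^1(S)$, is established in the paper (estimate \eqref{eq10}) only for finite trees, by decomposing into finitely many segments; Federer's Theorem 3.2.39 gives the Minkowski content of $S$, not the perimeter of the tube, and passing from one to the other for a countable tree is not automatic. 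Moreover, whether $E\cup\cN_\ep(S)$ is even indecomposable for \emph{small} $\ep>0$ is unclear when $E$ has infinitely many components approaching one another at all scales. The paper circumvents this by a two-step argument you do not have: it proves the recovery inequality only for \emph{bounded smooth} sets $\hat E$ (Propositions \ref{proplimsup1}--\ref{proplimsup2}, where $S$ is automatically a finite tree by Lemma \ref{lem:FiniteTree}), and then reduces the general case to the smooth case via Proposition \ref{propapprox}, which approximates $E$ by smooth sets $\hat E_\ep$ while controlling not just $P$ but the full quantities $P+2St$ and $P+2St+2St_c$. That reduction is nontrivial because $St(\cdot)$ and $St_c(\cdot)$ are not continuous (or even semicontinuous in the required direction) under $L^1$ perturbations, so one must track how filling small holes and cutting small components changes both Steiner lengths.

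A second, more local, gap concerns the simply connected recovery sequence: simply removing $\cN_\ep(S^c)$ from $E\cup\cN_\ep(S)$ does not produce a simply connected set in general, because near a point $v\in\pa E$ that is an endpoint of both $S$ and $S^c$ the added and removed tubes meet in a cross pattern, creating pinch points or non-simple boundary components. The paper's proof of Proposition \ref{proplimsup2} devotes an explicit case analysis (points 1--4 and Figure \ref{fig2}) to desingularizing exactly these corners and verifying that the resulting boundary is a single smooth Jordan curve. Your plan should be revised to anticipate this surgery rather than expecting the set-theoretic construction to directly yield a simple set.
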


\begin{proof}
	The proof follows immediately from Propositions \ref{propliminf}, \ref{proplimsup1}, \ref{proplimsup2}, and \ref{propapprox}, which will be proved in the following subsections.
\end{proof}

\textcolor{white}{.}

\subsection{Lim\,inf inequality}

\begin{prop}\label{propliminf}
	Let $E$ be an essentially bounded set of finite perimeter satisfying $\pa E=\pa^* E \mbox{ mod }\cH^1$. Suppose that $E_n$ is a sequence of sets of finite perimeter converging to $E$ in $L^1$. Then
	\beq \label{eq7}
		P(E)+2St(E)\le \liminf_n P_C(E_n),
	\eeq
	\beq \label{eq5}
		P(E)+2St(E)+2St_c(E) \le \liminf_n P_S(E_n).
	\eeq
\end{prop}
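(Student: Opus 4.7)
The plan is to prove both inequalities by extracting Hausdorff limits of the approximating connected (resp.\ simple) sets, identifying the excess perimeter with twice the length of a Steiner-type connector, and concluding via the Steiner lower bound. I will detail the argument for \eqref{eq7}; the bound \eqref{eq5} follows by running the same strategy simultaneously on $E_n$ and on its complement inside a large ball.

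I first reduce to a clean setting. Assume $\liminf_n P_C(E_n)<+\infty$ and pass to a subsequence so that each $E_n$ is indecomposable and $P(E_n)\to L<+\infty$. Since $E$ is essentially bounded and $E_n\to E$ in $L^1$, eventually $|E_n|<+\infty$, so Lemma \ref{lem3} gives $\diam(E_n^1)\le P(E_n)/2\le C$, and combined with $L^1$-convergence this yields $E_n\con B_R$ for a fixed $R$. Applying Proposition \ref{prop2} and a diagonal argument, I further assume each $E_n$ is smooth and classically connected, so that $\pa E_n$ is a finite disjoint union of smooth Jordan curves of total length $P(E_n)$. Two further extractions: by Blaschke's selection theorem the compact connected sets $\overline{E_n}\con\overline{B_R}$ converge in Hausdorff distance to a compact connected set $K$, and a density argument gives $\overline{E^1}\con K$; by weak-$*$ compactness of bounded Radon measures, $\cH^1\res\pa E_n\wsc\mu$ with $\mu(\R^2)=L$, $\supp\mu\con K$, and $\mu\ge|D\chi_E|$ by the localized lower semicontinuity of the perimeter on open sets.

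The key step, and main obstacle, is the doubling lower bound
\beqs
\mu\res(K\sm\overline{E^1})\ge 2\cH^1\res(K\sm\overline{E^1}).
\eeqs
Intuitively a point of $K$ outside $\overline{E^1}$ is accumulated by boundary arcs of $E_n$ bounding a vanishing thin tube, each of whose two sides contributes $\cH^1$-mass to $\mu$. I would establish this by a local slicing argument: on a small cube $Q$ around such a point, $|E_n\cap Q|\to 0$ while $K\cap Q\ne\epty$, and on generic parallel lines $\ell$ crossing $Q$ the 1D set $\ell\cap E_n\cap Q$ has small $\cH^1$-measure yet its relative boundary in $\ell$ carries at least two points whenever $\ell$ meets $K\cap Q$; integrating via the BV coarea formula then produces the factor $2$. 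The 1-rectifiability of $\mu\res(K\sm\overline{E^1})$, needed to interpret the bound, follows from finite mass together with the semicontinuity of $\cH^1$ under Hausdorff convergence of connected 1-dimensional limits.

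Once the doubling bound is in hand, $S:=K\sm\overline{E^1}$ is a compact connector of $\overline{E^1}$ (since $K\supset\overline{E^1}$ is connected), so $\cH^1(S)\ge St(E)$, and
\beqs
L=\mu(\R^2)\ge|D\chi_E|(\R^2)+2\cH^1(S)\ge P(E)+2St(E),
\eeqs
proving \eqref{eq7}. For \eqref{eq5}, simplicity of $E_n$ forces $B_R\sm\overline{E_n}$ to be M-connected as well, so a parallel Blaschke extraction on the complements provides a second compact connected set $K'\supset\overline{E^0}\cap\overline{B_R}$. The doubling lower bound now applies also on $K'\sm\overline{E^0}$, a set essentially disjoint from $K\sm\overline{E^1}$ (their overlap lies in $\pa^*E$, whose mass is already charged by $|D\chi_E|$), so the two excess contributions combine to yield $P(E)+2St(E)+2St_c(E)$, as required.
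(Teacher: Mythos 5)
Your route is genuinely different from the paper's. The paper parametrizes $\partial^* E_n$ by Lipschitz Jordan curves $\gamma_n$, extracts a uniform limit $\gamma$, and obtains the factor $2$ from an elementary multiplicity count for $\gamma$ (Lemma \ref{lem4} and its variants) together with the area formula $L(\gamma)=\int\theta\,d\cH^1$. You instead extract a Hausdorff limit $K$ of $\overline{E_n}$ and a weak-$*$ limit $\mu$ of $\cH^1\res\partial E_n$, and aim for a measure-density bound $\mu\ge 2\cH^1$ on $K\sm\overline{E^1}$. The set-up (reduction to smooth connected sets via Proposition \ref{prop2}, Blaschke/Golab extraction, $\mu\ge|D\chi_E|$, $\overline{E^1}\con K$) is sound, and the final Steiner comparison would be correct once the doubling bound is available.

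That doubling bound, however, is the heart of the matter and your sketch does not establish it. Slicing by parallel lines in a fixed direction gives at best
\[
\liminf_n\cH^1(\partial E_n\cap Q)\ \ge\ 2\,\cL^1\bigl(\pi\bigl(K\cap Q\sm\overline{E^1}\bigr)\bigr),
\]
where $\pi$ is the projection onto the transversal axis, and this is in general strictly weaker than $2\,\cH^1\bigl(K\cap Q\sm\overline{E^1}\bigr)$. Upgrading it to a lower density bound at $\cH^1$-a.e.\ point of $K\sm\overline{E^1}$ requires $K\sm\overline{E^1}$ to be $1$-rectifiable, and the slicing direction at each blow-up scale to be chosen transversal to the approximate tangent. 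You appeal to Golab's theorem for this rectifiability, but Golab concerns connected compact sets of finite $\cH^1$-measure; here $K$ is $2$-dimensional, $K\sm\overline{E^1}$ is not connected, and in the $P_C$ case $\partial E_n$ itself is disconnected, so its Hausdorff limit need not be rectifiable, and weak-$*$ limits of rectifiable $1$-measures can even be purely unrectifiable. There is a further issue: for a fixed $n$ a line $\ell$ meeting $K\cap Q$ need not cross $\partial E_n$ twice in $Q$, since $\overline{E_n}$ may approach $\ell$ from one side without touching it, so the two-point slice count only holds in an averaged $n\to\infty$ sense which your sketch does not formulate. The paper sidesteps all of this by working with the parametrizations $\gamma_n$: there the multiplicity $\theta\ge 2$ away from $\overline{E}$ is an elementary geometric consequence of $|B_r(x)\cap E_n|\to 0$ (Lemma \ref{lem4}), and rectifiability never enters.
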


\noindent The proof of Proposition \ref{propliminf} contains some technical lemmas which are proved in the sequel.

\begin{proof}
	We start by proving \eqref{eq5}. Without loss of generality assume that $\sup_n P_S(E_n)<+\infty$ and $\liminf_n P_S(E_n)=\lim_n P_S(E_n)<+\infty$. Let $\ga_n:[0,1]\to\R^2$ be Lipschitz Jordan curves such that $\cH^1((\ga_n) \De \pa^* E_n)=0$. Since $P_S(E_n)$ is uniformly bounded, all $E_n$ are simple sets, thus essentially bounded by Remark~\ref{rem-essbound}. Since they converge in $L^1$ to $E$ which is essentially bounded, they are essentially uniformly bounded. Hence the curves $\ga_n$ are uniformly bounded. The uniform bound on $\cH^1(\ga_n)$ implies the equicontinuity of the family of curves. Thus, by Ascoli-Arzel\`{a} Theorem, the sequence $\ga_n$ converges uniformly up to subsequence to some Lipschitz curve $\ga$.\\
	We define the multiplicity function
	\beqs
		\te:\R^2\to \N\cup\{+\infty\} \qquad \te(x)=\sharp(\ga^{-1}(x)).
	\eeqs
	By the area formula it follows that $\te$ is finite $\cH^1$-ae on $\R^2$.
	
	\begin{lemma} \label{lem4}
		Let $E,E_n,\ga,\ga_n$ be as in the proof of \eqref{eq5}. Suppose $\ga(t)=x \in \R^2$, $\gamma$ is differentiable at $t$, and $|B_r(x) \cap E|=0$ for some $r>0$. Then $\theta(x) \ge 2$.
	\end{lemma}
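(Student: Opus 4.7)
The plan is to argue by contradiction: assume $\te(x) = 1$, so that $\ga^{-1}(x) = \{t\}$, and derive a contradiction from the fact that $|E \cap B_r(x)| = 0$ while the Jordan curves $\ga_n$ bounding the simple sets $E_n$ must enclose a region of positive area near $x$.

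The first step is a localization of the pre-image: using $\te(x) = 1$, the continuity of $\ga$, and the compactness of $[0,1]$, for every $\de > 0$ I produce $\rho_\de > 0$ with $\ga^{-1}(B_{\rho_\de}(x)) \subset (t - \de, t + \de)$. Indeed, otherwise one could extract a sequence $s_k$ with $|s_k - t| \ge \de$ and $\ga(s_k) \to x$, whose cluster point would be a second preimage of $x$ by continuity. By the uniform convergence $\ga_n \to \ga$, the same localization holds for $\ga_n^{-1}(B_{\rho_\de/2}(x))$ once $n$ is large, so the arc $\ga_n|_{[t - \de, t + \de]}$ is the only part of $\ga_n$ meeting the small ball.

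Next I use differentiability. Writing $\ga(t + s) = x + s\tau + o(s)$ with $\tau = \ga'(t)$, I first treat the generic case $\tau \neq 0$. For $\de > 0$ small and $\rho = \de|\tau|/2$, the arc $\ga|_{[t - \de, t + \de]}$ is uniformly close to the straight segment from $x - \de\tau$ to $x + \de\tau$, whose endpoints lie outside $\overline{B_\rho(x)}$. By uniform convergence, for $n$ large the endpoints $\ga_n(t \pm \de)$ also lie outside $\overline{B_\rho(x)}$, and the Jordan arc $\ga_n|_{[t-\de, t+\de]}$ enters and exits $B_\rho(x)$ close to the antipodal points $x \pm \rho\tau/|\tau|$, crossing the disk essentially along a diameter.

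The contradiction follows from the Jordan structure. By Step 1 this crossing arc is the only part of $\pa^* E_n$ meeting $B_\rho(x)$; hence it separates the disk into two open components, each of measure at least $c\rho^2$ for some universal $c > 0$, exactly one of which lies mod $\cL^2$ in $E_n = \mathrm{int}(\ga_n)$. Therefore $|E_n \cap B_\rho(x)| \ge c\rho^2$ for all large $n$, contradicting $|E_n \cap B_\rho(x)| \to |E \cap B_\rho(x)| = 0$, which is valid as soon as $\rho < r$. The degenerate case $\tau = 0$ is handled by a parallel argument, replacing the straight segment by the precise $o(s)$-estimate and noting that either $\ga$ is constant on a neighborhood of $t$ (forcing $\te(x)=\infty$ outright) or the $\ga_n$-arc still encloses mod $\cL^2$ a positive piece of $E_n$ within a suitably chosen small disk. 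The main obstacle I foresee is making the Jordan-crossing argument in the final step rigorous, i.e.\ verifying that the single crossing arc truly separates $B_\rho(x)$ into two essentially equal regions with exactly one lying in $E_n$; the localization from Step 1 is precisely what rules out additional arcs of $\ga_n$ entering the ball and complicating the topology.
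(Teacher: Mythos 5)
Your proof follows the same mechanism as the paper's and is correct in its essential ideas: differentiability of $\gamma$ at $t$ places the arc $\gamma_n|_{[t-\delta,t+\delta]}$ in a thin tube through $x$; Jordan topology forces one of the two sides of that arc inside $B_\rho(x)$ to lie in $E_n$; and this contradicts $|E_n\cap B_\rho(x)|\to|E\cap B_\rho(x)|=0$. The organization is dual to the paper's: you argue by contradiction from $\theta(x)=1$, obtaining the localization of $\gamma_n$ near $x$ from compactness of the parameter interval, whereas the paper argues directly by exhibiting a second preimage $s\neq t$, with the localization appearing as the contradiction hypothesis of an intermediate claim and with an explicit tangent cone $A_\varepsilon$ playing the role of your thin tube. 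One small slip to fix: in Step~3 you invoke the Step~1 localization on the ball $B_\rho(x)$ with $\rho=\delta|\tau|/2$, but Step~1 only gives it for $B_{\rho_\delta/2}(x)$, and there is no reason to have $\delta|\tau|/2\le\rho_\delta/2$; set instead $\rho=\min\{\delta|\tau|/2,\ \rho_\delta/2,\ r\}$, noting that the endpoints $\gamma(t\pm\delta)$ then a fortiori lie outside $\overline{B_\rho(x)}$, so the rest of the argument is unchanged. The Jordan-separation step and the degenerate case $\tau=0$ are left at the same level of informality as in the paper's own proof (the latter is harmless, since in the application the lemma is invoked only at $\cH^1$-a.e.\ point of $(\gamma)$, where one may assume $\gamma'(t)\neq 0$ by the area formula).
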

	
	\noindent Lemma \ref{lem4} implies that $\theta(x) \ge 2$ at $\cH^1$-almost every $x \in (\gamma)\sm \pa^* E$.\\
	In fact let $\ga(t)=x\in (\gamma) \sm  \pa^* E$. Up to a $\cH^1$-negligible set, the curve $\ga$ is differentiable at such $t$ and $x\in(\ga)\sm\pa E$. So $x\in \mathring{E}\cup (\R^2\sm\overline{E})$. If $x\in \R^2\sm\overline{E}$ the hypotheses of Lemma \ref{lem4} are satisfies and then $\te(x)\ge 2$. If $x\in \mathring{E}$ one just applies an analogous argument to the set $\R^2\sm E$ in place of $E$.\\
	Also we notice that if $x\in\pa^* E$, then $\te(x)\ge 1$.\\
	In fact we can prove that $\R^2\sm(\ga)\con E^1\cup E^0$. Indeed if $x\in\R^2\sm(\ga)$, by uniform convergence $x\in\R^2\sm(\ga_n)$ for $n$ large, and then there exists $r>0$ such that either
	\beqs
		\frac{|B_r(x)\cap E_n|}{|B_r(x)|}=1  
	\eeqs
	for all large $n$, or
	\beqs
		\frac{|B_r(x)\cap E_n|}{|B_r(x)|}=0
	\eeqs
	for all large $n$. Passing to the limit first in $n$ and then in $r\searrow0$ we see that $x\in E^1\cup E^0$.\\
	
	\noindent By the uniform Lipschitz bound on $\ga_n$, we get that the sequence of derivatives $\ga'_n$ is uniformly bounded in $L^1\cap L^\infty$ and equi-integrable, then, by Dunford-Pettis Theorem, up to subsequence we have that 
	\beq\label{eq6}
	\begin{split}
		\liminf_n P_S(E_n) &= \liminf_n L(\ga_n) \ge L(\ga) = \int_{\R^2} \theta(x)\,d\cH^1(x)=\\
		&= \int_{E^0} \theta(x) \, d  \cH^1(x)
		+  \int_{E^1} \theta(x) \, d  \cH^1(x)
		+ \int_{ \pa^* E} \theta(x)  \,d  \cH^1(x) \ge \\
		&\ge \int_{(\gamma) \cap E^0} 2 \, d  \cH^1(x)
		+  \int_{(\gamma) \cap E^1} 2 \, d  \cH^1(x)
		+ \int_{ \pa^* E} 1  \,d  \cH^1(x)=\\
		&= 2 \cH^1\big( (\gamma)\cap E^0 \big) + 2 \cH^1\big( (\gamma)\cap E^1 \big)  + P(E) =\\
		&= 2 \cH^1\big( (\gamma)\cap ( \R^2\sm \overline{E}) \big)
		+ 2 \cH^1\big( (\gamma)\cap \mathring{E} \big)
		+ P(E),
	\end{split}
	\eeq
	where in the last equality we used Remark \ref{rem1}.
	
	\begin{lemma} \label{lem5}
		Let $E,E_n,\ga,\ga_n$ be as in the proof of \eqref{eq5}. Both the sets $\big( (\gamma)\cap (\R^2 \sm \overline{E}) \big) \cup \overline{E^1}$ and $\big( (\gamma)\cap \mathring{E} \big) \cup \overline{E^0}$ are equivalent $\mbox{mod } \cH^1$ to connected sets.
	\end{lemma}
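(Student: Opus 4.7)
The plan is to exhibit, for each of the two sets in the statement, an explicit compact connected set equal to it modulo $\cH^1$, built by taking Blaschke--Hausdorff subsequential limits of natural compact connected sets associated to the approximating sequence. Since $\sup_n P_S(E_n) < +\infty$, every $E_n$ is simple and the closed region $\overline{E_n^1}$ enclosed by the Jordan curve $\ga_n$ is a compact topological disk; moreover the uniform bound on $(\ga_n)$ (already established in the proof of \eqref{eq5}) yields a uniform bound on the sets $\overline{E_n^1}$, so by Blaschke's selection theorem we may extract a subsequence along which $\overline{E_n^1}$ Hausdorff-converges to some compact connected $K \con \R^2$.

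For the first set $A := \big((\ga)\cap(\R^2\sm\overline{E})\big)\cup \overline{E^1}$, I would show that $K = \overline{E^1} \cup (\ga)$, and then that $K = A$ modulo $\cH^1$. The inclusion $K \supset (\ga)$ is immediate from the uniform convergence $\ga_n \to \ga$ together with $(\ga_n)\con\overline{E_n^1}$, while $K \supset \overline{E^1}$ follows because any $x\in E^1$ admits, by Lebesgue density at $x$ and the $L^1$-convergence $E_n\to E$, approximating points $x_n \in E_n^1 \con \overline{E_n^1}$. For the reverse inclusion, if $x \in K \sm (\ga)$ then $\de:=d(x,(\ga))>0$, and for $n$ large $B_{\de/4}(x)$ is disjoint from $(\ga_n)$; by the Jordan curve theorem this ball is either entirely inside $\mathring{E_n}$ or entirely outside $\overline{E_n}$, and the presence of approximants $x_n\in \overline{E_n^1}\cap B_{\de/4}(x)$ forces containment, whence $|B_{\de/4}(x)\cap E_n|=|B_{\de/4}(x)|$ and $x\in E^1$ by $L^1$-convergence. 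Finally, $K\sm A = ((\ga)\cap\overline{E})\sm\overline{E^1}$: any point $x$ here has some $B_r(x)\cap E^1 = \emptyset$, so $x\in E^0$ by Lebesgue density, while $x\in\overline{E}\sm\mathring{E}\con\pa E$; hence $x\in E^0\cap\pa E \con \pa E\sm\pa^* E$, which is $\cH^1$-null by the hypothesis $\pa E = \pa^* E \mbox{ mod }\cH^1$.

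For the second set $B := \big((\ga)\cap \mathring{E}\big)\cup \overline{E^0}$, I would run an analogous argument on the ``complementary annuli'' $F_n := \overline{B_R(0)}\sm\mathring{E_n}$, where $R$ is chosen large enough that $E$ is essentially contained in $B_{R/2}(0)$ and $(\ga_n) \con B_{R/2}(0)$ for every $n$. Each $F_n$ is the closed ball minus an open topological disk strictly inside it, hence compact and connected (a closed topological annulus), so Blaschke yields a compact connected Hausdorff limit $K' \con \overline{B_R(0)}$. The same strategy as above shows that $K' = \big(\overline{E^0}\cap\overline{B_R(0)}\big)\cup\big((\ga)\cap\overline{B_R(0)}\big)$ modulo $\cH^1$, where one uses the identification $\overline{E^0} = E^0\cup\pa^* E$ modulo $\cH^1$ (again a consequence of Remark \ref{rem1} and the assumption on $\pa E$). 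The desired connected set is then $\tilde{K} := K'\cup(\R^2\sm B_R(0))$: it is connected because $\pa B_R(0)\con K'$ meets both pieces, and it equals $B$ modulo $\cH^1$, since $\R^2\sm\overline{B_R(0)} \con E^0 \con \overline{E^0}$ modulo $\cH^1$ by the choice of $R$.

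The main obstacle is the bookkeeping required to translate between the topological sets appearing naturally in the Hausdorff limits ($\overline{E}$, $\mathring{E}$, $\pa E$) and the measure-theoretic sets in the statement ($E^1$, $E^0$, $\pa^* E$, $\overline{E^0}$); the hypothesis $\pa E = \pa^* E \mbox{ mod }\cH^1$, via Remark \ref{rem1}, is exactly what is needed to close the gap between these two descriptions modulo $\cH^1$.
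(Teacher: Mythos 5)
Your proof is correct, and it follows essentially the same high-level strategy as the paper's (exhibit a compact connected set obtained as a Hausdorff limit, then identify it with the set in the statement modulo $\cH^1$), but the details are organized differently. The paper defines $int(\ga)$ as the odd-winding-number set of the limit curve $\ga$ and asserts directly that $\overline{E_n^1}=E_n\cup(\ga_n)$ converges in Hausdorff distance to $int(\ga)\cup(\ga)$, deducing connectedness; then a two-step case analysis shows $int(\ga)\cup(\ga) = E\cup(\ga) = A$ modulo $\cH^1$. You instead invoke Blaschke's selection theorem to extract a subsequential Hausdorff limit $K$ of $\overline{E_n^1}$, obtaining connectedness without first naming the limit set, and then identify $K=\overline{E^1}\cup(\ga)$ by the density and Jordan-curve arguments. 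This buys you a cleaner logical structure: you never need to prove the a priori Hausdorff convergence to a prescribed set, only to characterize whatever compact limit the selection theorem hands you. Your treatment of the second set is, in fact, an improvement in presentation: the paper appeals to Hausdorff convergence of the unbounded sets $\R^2\sm\overline{E_n}$, which strictly speaking requires a local or truncated interpretation, whereas your closed annuli $F_n=\overline{B_R(0)}\sm\mathring{E_n}$ are compact and connected, the Blaschke argument applies verbatim, and gluing back $\R^2\sm B_R(0)$ along $\pa B_R(0)\subset K'$ is elementary. Both proofs rely on the hypothesis $\pa E=\pa^*E$ mod $\cH^1$ in exactly the same place, namely to dispose of the residual set $(\ga)\cap\pa E\sm\pa^* E$.

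One small point worth making explicit: to run the Jordan-curve step you tacitly choose the representative $E_n=int(\ga_n)$, exactly as the paper does at the start of its Step 1; without fixing this representative, $B_{\de/4}(x)\subset\mathring{E_n}$ need not imply $|B_{\de/4}(x)\cap E_n|=|B_{\de/4}(x)|$. Also, in passing from ``$B_r(x)\cap E^1=\emptyset$'' to ``$x\in E^0$'' you should say that $E=E^1$ mod $\cL^2$ (Lebesgue density theorem), so $|B_r(x)\cap E|=0$; you gesture at this with ``Lebesgue density'' but it deserves a sentence.
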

	
	\noindent By Lemma \ref{lem5}, up to $\cH^1$-negligible sets, the set $\big( (\gamma)\cap (\R^2 \sm \overline{E}) \big) $ is a competitor for the Steiner problem with datum $\overline{E^1}$. Hence $\cH^1\big( (\gamma)\cap ( \R^2\sm \overline{E}) \big)\ge St(E)$. Analogously $\cH^1\big( (\gamma)\cap \mathring{E} \big)\ge St_c(E)$. Hence \eqref{eq6} implies \eqref{eq5}.\\
	
	\noindent Now we prove \eqref{eq7}. Without loss of generality assume that $\liminf_n P_C(E_n)=\lim_n P_C(E_n)<+\infty$. Each $E_n$ is indecomposable and by Theorem \ref{thm2} there exist at most countably many Lipschitz Jordan curves $\ga_{n,i}:[0,1]\to\R^2$ such that $\sum_i L(\ga_{n,i})=P(E_n)$ and $\Lip(\ga_{n,i})=L(\ga_{n,i})\le C$, where $L(\ga_{n,i})$ is the length of $\ga_{n,i}$. By Lemma \ref{lem3} one gets that the sets $E_n$ are uniformly essentially bounded. By Lemma \ref{lem2}, up to relabeling we can assume that $\ga_{n,0}$ is such that $sat(E_n)=int(\ga_{n,0}) \mbox{ mod }\cL^2$, and $L(\ga_{n,i})\ge L(\ga_{n,i+1})$ for any $i\ge1$.\\
	Up to subsequence and a diagonal argument we can assume that $\ga_{n,i}\to\ga_i$ as $n\to\infty$ uniformly. Then we denote $(\Ga_n)=\cup_i (\ga_{n,i})$ and $(\Ga)=\cup_i (\ga_i)$. Arguing as in the case of $P_S$ we have that $\liminf_n L(\ga_{n,i}) \ge L(\ga_i)$ for any $i$. By Fatou's Lemma we have that
	\beqs
	\begin{split}
		\liminf_n P_C(E_n)&=\liminf_n P(E_n)=\liminf_n \sum_i L(\ga_{n,i}) \ge \sum_i\liminf_n L(\ga_{n,i}) \ge \sum_i L(\ga_i).
	\end{split} 
	\eeqs
	As before, we define a multiplicity function $\te:\R^2\to\N\cup\{+\infty\}$ as $\te=\sum_i \te_i$ with
	\beqs
		\te_i:\R^2\to \N\cup\{+\infty\} \qquad \te_i(x)=\sharp(\ga_i^{-1}(x)).
	\eeqs
	Observe that $L(\ga_i)=\int_{\R^2} \te_i\,d\cH^1$, $\sum_i L(\ga_i)= \int_{\R^2} \te\,d\cH^1$, and the multiplicity functions $\te_i,\te$ are finite $\cH^1$-ae. Arguing as before we want to use the following result.
	
	\begin{lemma}\label{lem4'}
		Let $E,E_n,\ga_i,\ga_{n,i}$ be as in the proof of \eqref{eq7}. Suppose that for some $i$ we have that $\ga_i(t)=x \in \R^2$, $\gamma_i$ is differentiable at $t$, $|B_r(x) \cap E|=0$ for some $r>0$, and $L(\ga_i)>0$. Then $\theta(x) \ge 2$.
	\end{lemma}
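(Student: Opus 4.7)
I will argue by contradiction, mirroring the strategy of Lemma~\ref{lem4}. The plan is to assume $\theta(x)=1$, so that $\ga_i^{-1}(x)=\{t\}$ and $\ga_j^{-1}(x)=\emptyset$ for every $j\neq i$. Since $L(\ga_i)>0$ and $\ga_i$ is parametrized with constant velocity, $\ga_i'(t)\neq 0$, giving a well-defined tangent direction at $x$. Using continuity of $s\mapsto|\ga_i(s)-x|$ together with $\ga_i^{-1}(x)=\{t\}$, I will fix $\delta>0$ and $\rho\in(0,r)$ small enough that (i) $\ga_i([t-\delta,t+\delta])\con B_\rho(x)$ lies inside an arbitrarily thin tube around the tangent line at $x$, and (ii) $\ga_i([0,1]\sm[t-\delta,t+\delta])$ is disjoint from $\overline{B_\rho(x)}$. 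By uniform convergence $\ga_{n,i}\to\ga_i$, these two properties transfer, up to negligible enlargements, to $\ga_{n,i}$ for all large $n$.

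The main new difficulty, compared to Lemma~\ref{lem4}, is to argue that no other components $\ga_{n,j}$ with $j\neq i$ substantially enter $B_\rho(x)$ in the limit. Since $\sum_j L(\ga_j)<+\infty$, for every $\ep>0$ only finitely many limit curves satisfy $L(\ga_j)\ge\ep$; each such curve has image at positive distance from $x$ because $\theta_j(x)=0$, so after possibly shrinking $\rho$ the corresponding $\ga_{n,j}$ stay outside $\overline{B_\rho(x)}$ for $n$ large. For the small components with $L(\ga_{n,j})<\ep$, the planar isoperimetric bound $|int(\ga_{n,j})|\le L(\ga_{n,j})^2/(4\pi)$, combined with the uniform control on $P(E_n)=\sum_j L(\ga_{n,j})$ and the essential disjointness of the holes from Lemma~\ref{lem1}, shows that the total area of the regions they enclose inside $B_\rho(x)$ is $O(\ep)$, uniformly in $n$.

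Once these localizations are achieved, the situation in $B_\rho(x)$ reduces to the single-curve setting of Lemma~\ref{lem4}: up to an $O(\ep)$ area error, $\pa^*E_n\cap B_\rho(x)$ consists of one arc of $\ga_{n,i}$ close to a diameter of $B_\rho(x)$. Since this arc lies in $\pa^*E_n$, the measure-theoretic blow-up of $E_n$ at each of its points is a half-space, so one of the two half-disks cut by the arc consists (up to the $O(\ep)$ small holes) of points of density $1$ for $E_n$. This yields $|B_\rho(x)\cap E_n|\ge \pi\rho^2/2-O(\ep)-o_n(1)$. Sending $n\to\infty$ and then $\ep\to 0$ contradicts $E_n\to E$ in $L^1$ together with $|B_\rho(x)\cap E|=0$.

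The main obstacle will be the uniform control, in the second paragraph, of the countably many auxiliary boundary components $\ga_{n,j}$ near the point $x$: handling them requires combining the summability $\sum_j L(\ga_j)<+\infty$, the essential disjointness of the holes of each $E_n$, the planar isoperimetric inequality, and the uniform convergence of each individual $\ga_{n,j}$. Once this is in place, the remaining argument is the planar transposition of the local reasoning already used for Lemma~\ref{lem4}.
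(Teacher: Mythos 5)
Your overall strategy differs from the paper's: you try to push \emph{every} other component $\ga_{n,j}$, $j\neq i$, entirely out of a fixed ball $B_\rho(x)$ and then run the area/half-disk argument of Lemma~\ref{lem4} in that ball, whereas the paper argues by dichotomy: either a piece of $\ga_{n,i}$ itself returns to $B_{2\ep}(x)$ (and then Lemma~\ref{lem4} applies verbatim), or some \emph{other} component $\ga_{n,j}$ enters $B_{2\ep}(x)$, and in that case one extracts a specific $\bar{j}\neq i$ with $x\in(\ga_{\bar j})$ via the isoperimetric inequality and the non-increasing ordering of the lengths $L(\ga_{n,j})$ in $j$. Your route is not just a repackaging: it tries to avoid the extraction step, and this is where it breaks down.

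There are two problems. First, your split of the indices is not a partition: you discard the $j$ with $L(\ga_j)\ge\ep$ (finitely many, avoidable by shrinking $\rho$) and the $j$ with $L(\ga_{n,j})<\ep$ (controlled by isoperimetry), but an index $j$ with $L(\ga_j)<\ep$ and $L(\ga_{n,j})\ge\ep$ for infinitely many $n$ falls in neither class, since length is only lower semicontinuous along uniform convergence. This particular hole is fixable by indexing instead by $j\le\lceil P(E_n)/\ep\rceil$, using that the lengths are non-increasing in $j$.

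The second problem is structural and not merely cosmetic. The radius $\rho$ you need is bounded by $\min\{\,d(x,(\ga_j))\ :\ j\neq i,\ L(\ga_j)\ge\ep\,\}$, and this quantity can shrink much faster than $\sqrt{\ep}$ as $\ep\to0$: there is no a priori lower bound on $d(x,(\ga_j))$, only the qualitative statement $d(x,(\ga_j))>0$ coming from $\theta_j(x)=0$. Your final inequality $|B_\rho(x)\cap E_n|\ge \pi\rho^2/2 - C\ep - o_n(1)$ then gives only $\pi\rho(\ep)^2/2\le C\ep$, which is not a contradiction unless $\rho(\ep)\gtrsim\sqrt{\ep}$. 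When the limit curves $\ga_{j_k}$ (with lengths $L(\ga_{j_k})\to0$) cluster at $x$ while individually avoiding it, your localization radius collapses and the scheme produces no contradiction. The paper sidesteps this entirely: it observes that $|int(\ga_i)\cap B_r(x)|>0$ while $|E\cap B_r(x)|=0$, so the components $\ga_{n_\ep,j}$ entering $B_{2\ep}(x)$ must asymptotically cover a set of measure bounded below by a constant independent of $\ep$; by the isoperimetric inequality one of them satisfies $L(\ga_{n_\ep,j_\ep})\ge c>0$ \emph{uniformly in $\ep$}, so $j_\ep$ stays bounded, and along a subsequence $j_\ep\equiv\bar j$ one concludes $\ga_{\bar j}(s)=x$ by uniform convergence, giving $\theta(x)\ge 2$. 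This uniform positive lower bound on the length, together with the ordering of the indices, is exactly the ingredient missing from your argument; without it the contradiction you aim for does not materialize.
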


	\noindent As in the case of $P_S$, Lemma \ref{lem4'} implies that $\te(x)\ge 2$ at $\cH^1$-ae $x\in (\Ga)\sm\pa^*E$.\\
	In fact, since we have only countably many curves, then $\cH^1(\cup_j \{(\ga_j)\,\,|\,\,L(\ga_j)=0\})=0$. Hence $\cH^1$-ae $x\in (\Ga)\sm\pa^*E$ belongs to a curve $(\ga_i)$ with $L(\ga_i)>0$. Therefore one applies Lemma \ref{lem4'} with such $\ga_i$ exactly as in the above case of $P_S$.\\
	Also, it holds the following result.
	
	\begin{lemma}\label{lem4"}
		Let $E,E_n,\ga_i,\ga_{n,i}$ be as in the proof of \eqref{eq7}. Then for $\cH^1$-ae point $x\in\pa^* E$ it holds that $\te(x)\ge 1$.
	\end{lemma}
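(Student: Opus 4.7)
We prove $\cH^1(\pa^*E\setminus\bcup_i(\ga_i))=0$ by contradiction, using the planar isoperimetric inequality applied to the holes of each $E_n$. Suppose there is a point $x\in\pa^*E\setminus\bcup_i(\ga_i)$ which is also a Lebesgue density-$\frac12$ point of $E$ (valid for $\cH^1$-ae $x\in\pa^*E$). Since $x\notin(\ga_i)$ for every $i$, the distances $\de_i:=\dist(x,(\ga_i))>0$, and we set $\de_{(K)}:=\min_{i\le K}\de_i>0$ for each $K\in\N$.

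The first step is a localization. By uniform convergence $\ga_{n,i}\to\ga_i$ for each fixed $i\le K$, combined with $E_n\to E$ in $L^1$ and the density hypothesis at $x$, we show that for $n\ge N(K)$ sufficiently large and $r\le\de_{(K)}/2$ one has $B_r(x)\subset int(\ga_{n,0})$ and $B_r(x)\cap int(\ga_{n,i})=\emptyset$ for every $1\le i\le K$. Indeed, the alternatives ($B_r\subset ext(\ga_{n,0})$, or $B_r\subset int(\ga_{n,i})$ for some hole $i\le K$) would force $|B_r\cap E_n|\to 0$, contradicting $|B_r\cap E|>0$ by $L^1$-convergence. Hence, by Lemma \ref{lem1} and the essential disjointness of the holes,
\[
|B_r(x)\cap E_n^c|=\sum_{i>K}|B_r(x)\cap int(\ga_{n,i})|.
\]

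The core estimate follows from the planar isoperimetric inequality: since each $int(\ga_{n,i})$ is simple with perimeter $L(\ga_{n,i})$, one has $|int(\ga_{n,i})|\le L(\ga_{n,i})^2/(4\pi)$. From the decreasing ordering of hole lengths and $\sum_{i\ge 1}L(\ga_{n,i})\le M:=\sup_n P(E_n)<+\infty$, we get $L(\ga_{n,K+1})\le M/(K+1)$, so
\[
|B_r(x)\cap E_n^c|\le\sum_{i>K}\frac{L(\ga_{n,i})^2}{4\pi}\le\frac{L(\ga_{n,K+1})}{4\pi}\sum_{i>K}L(\ga_{n,i})\le\frac{M^2}{4\pi(K+1)}.
\]
Passing $n\to+\infty$ by $L^1$-convergence yields $|B_r(x)\cap E^c|\le M^2/(4\pi(K+1))$. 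On the other hand, the density hypothesis gives $|B_r(x)\cap E^c|\ge\pi r^2/4$ for $r$ small. Therefore $\pi r^2/4\le M^2/(4\pi(K+1))$ whenever $r\le\de_{(K)}/2$; letting $K\to+\infty$ with $r$ fixed below $\inf_i\de_i/2$ produces the contradiction $\pi r^2/4\le 0$, provided $\inf_i\de_i>0$.

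The main obstacle is the accumulation case $\inf_i\de_i=0$, when the constraint $r\le\de_{(K)}/2$ forces $r\to 0$ as $K\to+\infty$ and the simple argument above does not close. To deal with it, we separate the outer Jordan curve from the holes: since $|sat(E_n)|\le M^2/(4\pi)$ by isoperimetry, along a further subsequence $sat(E_n)\to S$ in $L^1$ with $S\supset E$; because the $sat(E_n)$ are simple, the $P_S$-case analysis (as in the proof of \eqref{eq5}) yields $\pa^*S\subset(\ga_0)$ $\mbox{mod }\cH^1$, so the outer part $\pa^*E\cap\pa^*S$ of $\pa^*E$ is covered by $(\ga_0)$. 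The remaining inner-hole part $\pa^*E\setminus\pa^*S$, made of boundaries of holes of $E$ strictly inside $S$, is handled by applying the same isoperimetric argument inside each hole region of $S$, with the corresponding hole-limit curve $\ga_j$ playing the role of the outer curve; accumulation inside a hole compactly contained in $S$ is excluded by the same reasoning. Combining, $\pa^*E\subset\bcup_i(\ga_i)$ $\mbox{mod }\cH^1$, proving the claim.
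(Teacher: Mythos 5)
Your pointwise isoperimetric argument is a genuinely different route from the paper's, and the non‑accumulation case ($\inf_i\de_i>0$) is handled correctly: the localization of $B_r(x)$ away from $(\ga_{n,i})$ for $i\le K$, the identity $|B_r(x)\cap E_n^c|=\sum_{i>K}|B_r(x)\cap int(\ga_{n,i})|$, and the tail estimate $\sum_{i>K}L(\ga_{n,i})^2\le L(\ga_{n,K+1})\sum_{i>K}L(\ga_{n,i})\le M^2/(K+1)$ are all sound. But you correctly flag the accumulation case $\inf_i\de_i=0$ as the main obstacle, and the fix you propose does not close it. The quantitative form of the obstruction: you need an $r$ with $M/(\pi\sqrt{K+1})<r\le\de_{(K)}/2$, i.e. $\de_{(K)}\sqrt{K+1}$ bounded below; but nothing rules out curves $\ga_i$ of length $\sim 1/i$ centered at distance $\sim 1/\sqrt i$ from $x$, which makes $\de_{(K)}\sim 1/\sqrt K$ and the window empty for all $K$. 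Such accumulation sets need not be $\cH^1$-negligible a priori, so the case cannot be dismissed.

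The paragraph offered to repair this — passing to $sat(E_n)\to S$, covering $\pa^*S$ by $(\ga_0)$, and then ``applying the same isoperimetric argument inside each hole region of $S$, with the corresponding hole-limit curve $\ga_j$ playing the role of the outer curve'' — is not a proof. The objects ``hole region of $S$'' and ``corresponding hole-limit curve $\ga_j$'' are not defined: $S$ is only an $L^1$-limit of simple sets and may be decomposable, and there is no established bijection between holes of $S$ (or of $E$) and the limit curves $\ga_j$; holes of $E$ may arise from the coalescence of infinitely many shrinking $\ga_{n,i}$ whose limits are points, so no single $\ga_j$ ``plays the role of the outer curve.'' Finally, ``accumulation inside a hole compactly contained in $S$ is excluded by the same reasoning'' is circular — the same accumulation problem recurs one level down, and the recursion is neither shown to terminate nor to exhaust $\pa^*E$.

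The paper avoids the pointwise analysis entirely. It first proves the structural identity $\chi_E=\chi_{int(\ga_0)}-\sum_{i\ge1}\chi_{int(\ga_i)}$ (by filling holes of boundary length $\le\de$ to get $E_{n,\de}\to E_\de$ with $|E_\de\setminus E|\le C\de$, then letting $\de\to0$), and then computes $D\chi_E$ via the divergence theorem as $-\mu_0+\sum_{i\ge1}\mu_i$ where each $\mu_i$ is concentrated on $(\ga_i)$. Since $D\chi_E$ is concentrated on $\cF E$ and $\mu$ on $\bigcup_i(\ga_i)$, it follows immediately that $\te\ge1$ $\cH^1$-a.e.\ on $\pa^*E$. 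This global/measure-theoretic argument sidesteps the accumulation difficulty altogether, which is exactly what your local argument cannot do.
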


	\noindent Therefore, arguing like in \eqref{eq6}, one gets
	\[ \liminf_n P_C(E_n)\ge \int_{\R^2} \theta(x) \,d \cH^1(x) \ge  P(E) + 2\cH^1\Big( (\Gamma)\cap (\R^2\sm \overline{E}) \Big) .\]
	By an argument analogous to the one in Lemma \ref{lem5}, we get that $\big( (\Gamma)\cap (\R^2\sm \overline{E}) \big) \cup \overline{E^1}$ is equivalent $\mbox{mod } \cH^1$ to a connected set. Hence $\cH^1\big( (\Gamma)\cap (\R^2\sm \overline{E}) \big) \ge St(E)$, and thus $P(E) + 2St(E)   \le  \liminf_{n \to +\infty} P_C(E_n)$.\\
\end{proof}

\noindent We conclude this part by proving the lemmas used in the proof of Proposition \ref{propliminf}.

\begin{proof}[Proof of Lemma \ref{lem4}]
	Let us reparametrize $\ga$ so that $\gamma : [-1/2,1/2] \to \R^2$, $t=0$, and without loss of generality $x=0$. Let $\delta>0$ be small enough such that $\gamma_{|[-\delta,\delta]}\con B_r(x)$ is the graph of a $L$-Lipschitz function over its tangent. For $\ep>0$ we define (see also Figure \ref{fig1}):\\
	i) $A = \{ (x_1,x_2)\in \R^2 \ : \ |x_1| \le \delta, \ |x_2| \le L|x_1| \}$,\\
	ii) $A_\ep = \cN_\ep(A)$.\\
	For all $s\in [-\delta,\delta]$, we have $\gamma(s) \in A$. Let us choose $0<\ep<r$ small enough such that $B_\ep(\gamma(-\delta))$, $B_\ep(\gamma(\delta))$ and $B_\ep(x)$ are pairwise disjoint. As $\gamma_n$ converges uniformly to $\gamma$, there exists $n_0$ such that for all $n \ge n_0$, for all $s\in [-\delta,\delta]$, we have $\gamma_n(s) \in A_\ep$.\\
	We claim that for all $N>n_0$ there exist $n_\ep \ge N$ and $s_\ep \in [-1/2,1/2]\sm (-\delta,\delta)$ such that
	\beq\label{eq8}
		 |\gamma_{n_\ep}(s_\ep) - x| < 2\ep .
	\eeq
	By virtue of this claim, for $\ep \to 0$ we can see that $s_\ep$ converges to some $s\in[-1/2,1/2]\sm (-\delta,\delta)$ and $n_\ep \to +\infty$. By uniform convergence we have $\gamma_{n_\ep}(s_\ep) \to \gamma(s)$ and $\gamma(s)=x$ by \eqref{eq8}. So $\gamma(0)=\gamma(s)=x$ and $s\neq 0$, thus $\theta(x) \ge 2$.\\
	Thus we are left to prove the above claim. Suppose by contradiction that for all $n>n_0$ for all $s \in [-1/2,1/2]\sm (-\delta,\delta)$ we have that
	\[ \gamma_n(s) \notin B_{2\ep}(x) .\]
	Let $C_\ep = B_{2\ep}(x) \sm A_\ep$ and denote $C^+_\ep$ and $C^-_\ep$ its two connected components. Since $\gamma_n$ is a closed curve and for $n>n_0$ it holds that $\ga_n|_{[-\de,\de]}\con A_\ep$, then either $C^+_\ep$ or $C^-_\ep$ is contained in $int(\ga_n)=E_n \mbox{ mod }\cL^2$. Therefore
	\[ |E_n \cap B_r(x)| \ge \frac{1}{2}|C_\ep| \]
	for $n>n_0$, but this contradicts the hypotheses.
	
	\begin{figure}[h]
	\begin{center}
		\begin{tikzpicture}[scale=0.7]
		\fill[color=gray!20, opacity=1] (0,0) circle(2) ;
		\fill[color=white, opacity=1]
		(0,1) --
		plot[domain=60:180, samples=50] ({-5+cos(\x)},{3+sin(\x)}) --
		plot[domain=180:300, samples=50] ({-5+cos(\x)},{-3+sin(\x)}) --
		(0,-1) --
		plot[domain=240:360, samples=50] ({5+cos(\x)},{-3+sin(\x)}) --
		plot[domain=0:120, samples=50] ({5+cos(\x)},{3+sin(\x)}) -- cycle ;
		\fill[color=gray!80, opacity=0.6]
		(-5,3) -- (0,0) -- (5,3) -- (5,-3) -- (0,0) -- (-5,-3) -- cycle ;
		\fill[color=gray!80, opacity=0.6]
		(0,1) --
		plot[domain=60:180, samples=50] ({-5+cos(\x)},{3+sin(\x)}) --
		plot[domain=180:300, samples=50] ({-5+cos(\x)},{-3+sin(\x)}) --
		(0,-1) --
		plot[domain=240:360, samples=50] ({5+cos(\x)},{-3+sin(\x)}) --
		plot[domain=0:120, samples=50] ({5+cos(\x)},{3+sin(\x)}) -- cycle ;
		\draw[dashed] (0,0) circle(2) circle(1) ;
		\draw[thick] plot[domain=-5:0, samples=100]	(\x,{0.3*\x*sin(60*\x)}) ;
		\draw[thick] plot[domain=0:5, samples=100]	(\x,{0.04*\x*\x*(3-\x)}) ;
		\draw[dashed] (-5,{0.3*(-5)*sin(60*(-5))}) circle(1) ;
		\draw[dashed] (5,{0.04*5*5*(3-5)}) circle(1) ;
		\draw plot[domain=-4.5:5.5, samples=100]	(\x,{-0.75-(\x/5)^2}) ;
		\draw[->] (-6.5,0) -- (6.5,0) ;
		\draw (-5,0) node{$|$} (0,0) node{$|$} (5,0) node{$|$} ;
		\draw (-5,0.5) node{$-\delta$} (0,0.5) node{$x$} (5,0.5) node{$\delta$} ;
		\draw (-3,1.5) node{$\gamma_{|[-\delta,\delta]}$} ;
		\draw (-4,1) node{$\gamma_{n|[-\delta,\delta]}$} ;
		\draw[->] (-3,1.2) -- (-2.6,0.5) ;
		\draw[->] (-4,0.8) -- (-3.5,-1.15) ;
		\draw (4,1.5) node{$A$} (5.5,1.5) node{$A_\varepsilon$}
		(0,-1.5) node{$C^-_\varepsilon$} (0,1.5) node{$C^+_\varepsilon$} ;
		\draw[<->] (-3.7,-2.2) -- (-3.2,-3) ;
		\draw (-3.2,-2.5) node{$\varepsilon$} ;
		\end{tikzpicture}
	\end{center}
	\caption{Sketch of the construction in the proof of Lemma \ref{lem4}.}\label{fig1}
	\end{figure}
\end{proof}

\textcolor{white}{text}

\begin{proof}[Proof of Lemma \ref{lem5}]
	Without loss of generality we can identify $E_n=int(\ga_n)=\{x\in\R^2\sm(\ga)\,\,|\,\,Ind_{\ga_n}x=1\}$ and we let $int(\ga)=\{x\in\R^2\sm(\ga)\,\,|\,\,Ind_\ga x\equiv1 \mbox{ mod }2\}$. Since $E_n\cup (\gamma_n) \to int(\ga)\cup (\gamma)$ in Hausdorff distance and $E_n\cup (\gamma_n)$ is connected then, by a simple application of Golab theorem, $int(\ga)\cup (\gamma)$ is connected as well.\\
	
	\noindent Step 1: $int(\ga)\cup (\gamma)$ and $E\cup (\gamma)$ are equivalent $ \mbox{ mod }  \cH^1$.\\
	We first prove $int(\ga)\cup (\gamma) \subset E\cup (\gamma)$ up to a $ \cH^1$-negligible set. If $x \in int(\ga)$ then, for $r$ small and $n$ large enough we have that
	\[ \frac{|E_n\cap B_r(x)|}{|B_r(x)|} = 1, \]
	then $x \in E^1 = \mathring{E} \mbox{ mod }\cH^1$. So $int(\ga)\cup (\gamma) \subset E\cup (\gamma)$ up to a $ \cH^1$-negligible set. Second, we prove that $\big( E\cup (\gamma) \big) \sm \big( int(\ga)\cup (\gamma) \big)$ is $ \cH^1$-negligible. Indeed if $x \in \big( E\cup (\gamma) \big) \sm \big( int(\ga)\cup (\gamma) \big)$ then $x \notin \mathring{E}$, otherwise if $B_\ro(x)\con E$, then $B_\ro(x)\con E_n \mbox{ mod }\cL^2$ for $n$ large and eventually $x\in int(\ga)$. So we got that $x \in  \pa E$. Since $ \pa^*E \subset (\gamma)$ as a consequence of Lemma \ref{lem4}, we have $x \notin  \pa^* E$. So $x\in\pa E\sm \pa^* E$, which is $\cH^1$-negligible.\\
	
	\noindent Step 2: $E\cup (\gamma)$ and $\big( (\gamma)\cap ( \R^2\sm \overline{E}) \big) \cup \overline{E^1}$ are equivalent $ \mbox{ mod }  \cH^1$.\\
	We first notice that $ \pa^* E =  \pa^* E^1 \subset  \pa E^1 \subset \overline{E^1}$, then $E^1 \cup  \pa^* E \subset \overline{E^1}$.\\
	a)Let $x\in E\cup (\gamma)$.\\
		\indent	i) If $x \in E$ then $x \notin  \R^2\sm \overline{E} = E^0  \mbox{ mod }  \cH^1$. Therefore $x \in E^1\cup  \pa^* E \subset \overline{E^1}$.\\
		\indent	ii) If $x \in (\gamma)$, then either $x \in E^1\cup  \pa^* E \subset \overline{E^1}$ or $x \in E^0 =  \R^2\sm \overline{E}  \mbox{ mod }  \cH^1$.\\
		\indent So $x \in \big( (\gamma)\cap ( \R^2\sm \overline{E}) \big) \cup \overline{E^1}$ up to a $ \cH^1$-negligible set.\\
	b) Let $x \in \big( (\gamma)\cap ( \R^2\sm \overline{E}) \big) \cup \overline{E^1}$.\\
			\indent i) If $x \in (\gamma)\cap ( \R^2\sm \overline{E})$ then $x \in (\gamma)$.\\
			\indent ii) If $x \in \overline{E^1}$, then either $x \in E^1 = \overset{\circ}{E}  \mbox{ mod }  \cH^1 \subset E$ or $x \in  \pa^* E \subset (\gamma)$ or $x \in E^0$. In this last case \indent as  $\overline{E^1} \subset \overline{E}$ and $\mathring{E} \cap E^0 = \emptyset$, we have $x \in  \pa E =  \pa^* E  \mbox{ mod }  \cH^1 \subset (\gamma)$.\\
		\indent So $x \in E\cup (\gamma)$ up to a $ \cH^1$-negligible set.\\
	
	\noindent Putting together Step 1 and Step 2, we conclude that $\big( (\gamma)\cap ( \R^2\sm \overline{E}) \big) \cup \overline{E^1}$ is equivalent $ \mbox{ mod }  \cH^1$ to $int(\ga)\cup (\gamma)$, which is connected.\\
	The thesis for $\big( (\gamma)\cap \mathring{E} \big) \cup \overline{E^0}$ follows using the same arguments. Such set is equivalent $\mbox{mod }  \cH^1$ to $( \R^2\sm E) \cup (\gamma)$, which is equivalent $ \mbox{mod }  \cH^1$ to $\big( \R^2\sm \overline{int(\ga)}\big) \cup (\gamma)$. This last set is connected as limit in Hausdorff distance of $ \R^2\sm \overline{E_n}$, which is connected since $\overline{E_n}=\overline{int(\ga_n)}$ is simply connected.\\
\end{proof}

\begin{proof}[Proof of Lemma \ref{lem4'}]
	Up to reparametrize $\ga_i$, let $\delta>0$ be small enough such that $\gamma_i{|_{[-\delta,\delta]}}\con B_r(x)$ is the graph of a $L$-Lipschitz function over its tangent. Arguing like in the proof of Lemma \ref{lem4} the following claim holds.\\
	Fix $\ep>0$. Then for all $N$ sufficiently big there exist $n_\ep \ge N$ and $i_\ep \in \N$ such that:\\
	i) if $i_\ep=i$ then there exists $s_\ep \in [-1/2,1/2]\sm (-\delta,\delta)$ such that
		\[ |\gamma_{n_\ep,i}(s_\ep) - x| < 2\ep, \]
	ii) otherwise, there exists $s_\ep \in [-1/2,1/2]$ such that
		\[ |\gamma_{n_\ep,i_\ep}(s_\ep) - x| < 2\ep. \]
	If for a sequence $\ep\to0$ the first alternative holds, the proof follows as in the case of Lemma \ref{lem4}. So let us assume that for $\ep\to0$ the second alternative occurs. Let
	\[
	I_{\ep} = \left\{ j \in \N\sm\{i\} \ : \
	\exists s \in [-1/2,1/2] \quad \text{satisfying} \quad |\gamma_{n_\ep,j}(s) - x| < 2\ep \right\} 
	\]
	Assume without loss of generality that $\theta_i(x) = 1$, otherwise already $\te(x)\ge2$. Then, since $\ga_i$ is also differentiable at $t$, for any $r>0$ it holds that $|int(\gamma_i) \cap B(x,r)| > 0$. Since $|E \cap B(x,r)|=0$, then
	\[
	\big|\big[int(\gamma_i) \cap B(x,r)\big]\sm\big[ \cup_{j \in I_\ep} int(\gamma_{n_\ep,j}) \big]\big|\to0
	\]
	as $n_\ep \to +\infty$. Then, for $N$ large enough, we have
	\[ \sum_{j \in I_\ep} |int(\gamma_{n_\ep,j})| \ge  \frac{1}{2} |int(\gamma_i) \cap B(x,r)| .\]
	By the isoperimetric inequality we have that
	\begin{equation*}
	\sum_{j \in I_ \ep} |int(\gamma_{n_ \ep,j})| \le \frac{1}{4\pi}
	\sum_{j \in I_ \ep} L(\gamma_{n_ \ep,j})^2 \le C_1
	\left( \sup_{j \in I_ \ep} L(\gamma_{n_ \ep,j}) \right) P(E_{n_\ep})\le C_2 \sup_{j \in I_ \ep} L(\gamma_{n_ \ep,j}).
	\end{equation*}
 	Then there exists $j_ \ep \in I_ \ep$ such that
	\[ L(\gamma_{n_ \ep,j_ \ep}) \ge \frac{|int(\gamma_i) \cap B(x,r)|}{4C}>0 .\]
	Since the curves $(\gamma_{n,i})_i$ are ordered so that their length is non-increasing in $i$, then $j_ \ep$ is bounded when $ \ep \to 0$. Hence there is a sequence $\ep\to0$ and some $\overline{j}\neq i$ such that $\ga_{n_\ep,\overline{j}}(s_\ep)\in B_{2\ep}(x)$ for some $s_\ep\to s$. Thus $\ga_{\overline{j}}(s)=x$ and $\te(x)\ge 2$.\\
\end{proof}

\begin{proof}[Proof of Lemma \ref{lem4"}]
	For any $\de>0$ let
	\[
	E_{n,\de}:=E_n\cup\bcup_{L(\ga_{n,i})\le\de} int(\ga_{n,i}).
	\]
	By Lemma \ref{lem1} the boundary decomposition of $E_{n,\de}$ consists of a finite number of curves, independently of $n$. In particular there exists the limit $E_\de=\lim_n E_{n,\de} \supset E$ in the $L^1$ sense. Observe that
	\beqs
		|E_{n,\de}\sm E_n| \le \sum_{L(\ga_{n,i})\le\de} |int(\ga_{n,i})|\le C\sum_{L(\ga_{n,i})\le\de} L(\ga_{n,i})^2\le C\de P(E_n) \le C\de,
	\eeqs
	with $C$ independent of $n$. Hence
	\[
		\bigg|\bcap_{\de>0} E_\de \sm E\bigg| \le |E_\de\sm E| =\lim_n |E_{n,\de}\sm E_n| \le C\de,
	\]
	for any $\de>0$. Then $E=\cap_{\de>0} E_\de$.\\
	Now let $\de_j\searrow0$ and $E_{\de_m}=\cap_{j=1}^m E_{\de_j}$. Then $|E|=\lim_m |E_{\de_m}|$, that is $\|\chi_{E_{\de_m}}\|_{L^1}\to\|\chi_E\|_{L^1}$. Also, since $\cL^2(\pa E)=0$, it is easily verified that $\chi_{E_{\de_m}}\to\chi_E$ pointwise almost everywhere. In particular
	\beq\label{eq28}
		\chi_{E_{\de_m}}\xrightarrow[m\to\infty]{}\chi_E \qquad\mbox{ in $L^1$.}
	\eeq
	From now on let $\ind_\ga(x)$ denote the index of $x$ with respect to a curve $\ga$. Up to reparametrization we can assume that each $\ga_{n,i}$ is positively oriented with respect to $int(\ga_{n,i})=\{x\in\R^2\sm(\ga_{n,i})\,|\,\ind_{\ga_{n,i}}(x)=1\}$. Also call $int(\ga_i):=\{x\in\R^2\sm(\ga_i)\,|\,\ind_{\ga_i}(x)=1\}$. Then by Lemma \ref{lem1} we can write
	\beqs
		\chi_{E_{n,\de}}=\chi_{int(\ga_{n,0})}-\sum_{i=1}^{k(\de)} \chi_{int(\ga_{n,i})},
	\eeqs
	\beq\label{eq29}
	\chi_{E_\de}=\chi_{int(\ga_0)}-\sum_{i=1}^{k(\de)} \chi_{int(\ga_i)}.
	\eeq
	Observe that for any $j\neq l$ it holds that
	\beq\label{eq31}
		|int(\ga_j)\cap int(\ga_l)|=\lim_n |int(\ga_{n,j})\cap int(\ga_{n,l})|=0.
	\eeq
	Hence $f(x):=\sum_{i=1}^\infty \chi_{int(\ga_i)}(x) \in\{0,1\}$ is well defined $\cL^2$-ae. Letting $f_k:=\sum_{i=1}^k \chi_{int(\ga_i)}$, it is easily verified that $f_k\to f$ pointwise $\cL^2$-ae. Also $f_k,f\le \chi_{B_{R}(0)}$ for $R$ sufficiently large, then by Lebesgue Theorem we get that $f_k\to f$ in $L^1$, and thus
	\beq\label{eq30}
		\chi_{int(\ga_0)}- \sum_{i=1}^k \chi_{int(\ga_i)} \xrightarrow[k\to\infty]{}  \chi_{int(\ga_0)}- \sum_{i=1}^\infty \chi_{int(\ga_i)} \qquad\mbox{ in $L^1$.}
	\eeq
	Putting together \eqref{eq28}, \eqref{eq29}, and \eqref{eq30} we conclude that
	\beq
		\chi_E= \chi_{int(\ga_0)}- \sum_{i=1}^\infty \chi_{int(\ga_i)}.
	\eeq
	Finally, for any field $X\in C^1_c(\R^2;\R^2)$, parametrizing each $\ga_i$ by arclength on $[0,L(\ga_i)]$ and using \eqref{eq31} we have that
	\beqs
	\begin{split}
		-\int_{\cF E} X\,dD\chi_E &=\int_E \div X = \int \bigg( \chi_{int(\ga_0)}- \sum_{i=1}^\infty \chi_{int(\ga_i)} \bigg) \div X = \int_{int(\ga_0)} \div X -\sum_{i=1}^\infty \int_{int(\ga_i)} \div X =\\
		&= \int_0^{L(\ga_0)} \lgl X\circ\ga_0, T\tau_0\rgl\,dt - \sum_{i=1}^\infty \int_0^{L(\ga_i)} \lgl X\circ\ga_0, T\tau_0\rgl\,dt=\\
		&=\int X \,d\mu_0 - \sum_{i=1}^\infty \int X \, d\mu_i,
	\end{split}
	\eeqs
	where $T\tau_i$ denotes the clockwise rotation of an angle $\pi/2$ of the tangent vector $\tau_i$ of $\ga_i$, and $\mu_i$ is the vector valued measure
	\beqs
		\mu_i(p)= \left(\sum_{y\in \ga_i^{-1}(p)} T\tau_i(y)\right)\big(\cH^1\res(\ga_i)\big) (p).
	\eeqs
	It follows that $\mu=\mu_0-\sum_{i=1}^\infty \mu_i$ is a measure and
	\beqs
		\mu=-D\chi_E.
	\eeqs
	Since $\mu$ is concentrated on $(\Ga)=\bcup_{i=0}^{\infty}(\ga_i)$ and $D\chi_E$ is concentrated on $\cF E$, it follows that for $\cH^1$-ae point $p\in\pa^* E$ one has that $\te(p):=\sum_{i=0}^\infty \te_i(p)\ge 1$.
\end{proof}

\textcolor{white}{.}

\subsection{Lim\,sup inequality on regular sets}

In this subsection we deal with the $\limsup$ inequality evaluated on smooth bounded sets. It is useful to remember that such sets have a finite number of connected components and holes.

\begin{lemma}\label{lem:FiniteTree}
	Let $E$ be a bounded smooth set. Let $S$ and $S^c$ be Steiner trees of $\overline{E^1}$ and $\overline{E^0}$, respectively. Then:\\
	i) the Steiner trees $S$ and $S^c$ are finite,\\
	ii) if $v$ is a vertex of $S$ or $S^c$ and $v\in\pa E$, then $v$ is an endpoint and the edge having $v$ as endpoint is orthogonal to $\pa E$.
\end{lemma}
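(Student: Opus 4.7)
The plan is to handle (i) by combining property (iv) of the quoted Steiner theorem with a minimality argument on the connected components of $S$ (and $S^c$), and (ii) by a first variation of length along $\pa E$.

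For (i), since $E$ is smooth and bounded, $\pa E$ is a finite disjoint union of smooth simple closed curves, so $\overline{E^1}=\overline{E}$ and $\overline{E^0}\cap \overline{B_R(0)}$ (with $R$ as in Definition \ref{def1}) are compact with a finite number $m$ of connected components $K_1,\dots,K_m$. I would denote by $K$ the relevant compact set and by $\{T_j\}_j$ the at most countable family of connected components of $S$. By (iv) each $T_j$ is a tree with endpoints on $K$, with at most one endpoint on each $K_i$. The first step is to show that, by minimality, each $T_j$ has at least two endpoints on $K$: otherwise removing $T_j$ entirely would leave $K\cup S$ connected while strictly decreasing length. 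Next I would introduce the bipartite graph $G$ with parts $\{K_i\}$ and $\{T_j\}$, with an edge between $K_i$ and $T_j$ iff $T_j$ has an endpoint on $K_i$. The graph $G$ is connected (since $K\cup S$ is) and acyclic (a cycle in $G$ would allow pruning a small sub-arc of some $T_j$ at one of its endpoints on $K$, while rerouting the lost connection through the rest of the cycle, contradicting minimality). Counting incidences in the tree $G$ and using that each $T_j$ has degree $\ge 2$ gives $2\ell\le m+\ell-1$, so the number $\ell$ of components of $S$ is at most $m-1$. Finally each $T_j$ has at most $m$ endpoints and, by the classical Euler-type relation for planar Steiner trees (all non-endpoint vertices have degree exactly $3$), at most $m-2$ triple joints; hence each $T_j$ is a finite tree and so is $S$. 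The same argument applies verbatim to $S^c$.

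For (ii), first note that $\pa E\subset\overline{E^1}$ and $\pa E\subset\overline{E^0}$, so $v\in K$. By (iv) any vertex of $S$ lying on $K$ must be an endpoint: if $v\in K$ had degree $d\ge 2$, the first variation argument below applied independently to each edge incident to $v$ would force all $d$ edges to be orthogonal to $\pa E$ at $v$, hence mutually colinear, contradicting the tree structure. Thus $v$ is an endpoint. To establish orthogonality, let $e$ be the edge incident to $v$, $u_e$ the unit vector along $e$ pointing from $v$ to its other endpoint, parametrize $\pa E$ by arclength through $v$ as $s\mapsto v(s)$ with $v(0)=v$ and tangent $\tau$, and consider the competitor $S_s$ obtained by replacing $v$ with $v(s)$ as endpoint of $e$. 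Since $v(s)\in\pa E\subset K$, the set $S_s$ is admissible, and a first order expansion gives
\[
\cH^1(S_s)-\cH^1(S)=-\langle u_e,\tau\rangle\, s + O(s^2).
\]
Minimality at $s=0^\pm$ forces $\langle u_e,\tau\rangle=0$, i.e., $e$ meets $\pa E$ orthogonally at $v$.

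The main obstacle I anticipate is the exclusion of branching vertices on $\pa E$: the first variation must be applied edge by edge, and one has to verify that perturbing the foot of a single edge independently yields an admissible competitor. This is guaranteed by the inclusion $\pa E\subset K$, but should be spelled out, and a bit of care is needed when the perturbed endpoint risks coinciding with another vertex of $S$. A secondary technical point is the pruning step in the bipartite graph argument for (i), where one must ensure that removing a sub-arc of some $T_j$ keeps the remaining pieces admissible as a Steiner competitor; this is routine but uses the local tree structure of $S$.
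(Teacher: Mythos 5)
Your proof is correct and follows the same overall strategy as the paper. For part (i), both arguments hinge on the fact that each connected component of $S$ has finitely many endpoints on $K$ (at most one per component of $K$), so it is a Steiner minimizer for a finite set and hence a finite tree; your bipartite-graph counting is a useful explication of the paper's terse ``by minimality, there exists only a finite number of connected components.'' For part (ii), the paper obtains orthogonality by the same first variation of an edge-endpoint sliding along $\pa E$, but rules out branching vertices on $\pa E$ by observing that a further incident edge would be forced into $\mathring{E}$; you instead apply the first variation independently to each incident edge, concluding that all of them must be orthogonal to $\pa E$ and hence colinear, contradicting the tree structure. Both routes are sound, and your variant is somewhat more transparent for a hypothetical degree-two vertex, where the angle condition alone does not immediately force an edge into $\mathring{E}$.

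One small point of bookkeeping: when you introduce part (ii) with ``By (iv) any vertex of $S$ lying on $K$ must be an endpoint,'' item (iv) of the quoted Steiner theorem does not by itself say this; the actual justification is the first-variation argument you supply in the next sentence, so the attribution should be dropped. Also, when applying the edge-by-edge perturbation it is worth noting, as you partly anticipate, that the perturbed competitor $S_s$ remains admissible because both $v$ and $v(s)$ lie on $K$, so all subtrees previously attached at $v$ remain connected to $K$; this is exactly why the constraint $\pa E\subset K$ is essential to your argument.
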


\begin{proof}
	i) Let $S_k$ be a connected component of $S$. By regularity properties of Steiner trees (\cite{PaSt13}), $S_k$ has at most one endpoint on each connected component of $E$. Then $S_k$ has a finite number of endpoints $\{p_1,\cdots,p_N\}$, therefore $S_k$ is a Steiner tree for $K=\{p_1,\cdots,p_N\}$. Hence $S_k$ is then a finite tree. Moreover $S_k$ connects at least two distinct connected components of $E$. Then, by minimality, there exists only a finite number of connected components of $S$. Thus $S$ is a finite tree. The same argument can be applied to $S^c$.\\
	ii) Let $v\in\pa E\cap S$ be a vertex of $S$, which is a finite tree. Then $v$ clearly has degree $1$, otherwise another edge with endpoint at $v$ would intersect $\mathring{E}$. The orthogonality follows immediately from the first variation of the length of the edge having endpoints $v$ and $w$, keeping $w$ fixed and $v\in\pa E$.
\end{proof}

\begin{remark}
	Let $E$ be a bounded smooth set. Let $S$ be the Steiner tree of $\overline{E^1}$ and let $S_\ep:=\cN_\ep(S)$. Then
	\beq\label{eq10}
		\lim_{\ep\to0} |S_\ep|=0, \qquad \qquad \limsup_{\ep\to0} \cH^1\left(\{ x\,\,|\,\,d(x,S)=\ep \}\right)\le 2\cH^1(S).
	\eeq
	In order to obtain \eqref{eq10} recall that $S$ is a finite tree (Lemma \ref{lem:FiniteTree}), and thus we can assume that $S=\overline{S}$ is closed. Hence $S$ has finitely many connected components $S_i$, and each $S_i$ is a connected compact finite tree given by the union of finitely many essentially disjoint segments with positive length, i.e. $S_i=\bcup_{j=1}^{J_i} s_j$ and $\cH^1(s_j\cap s_k)=0$ for $j\neq k$. If $s_j$ is a segment, it is easy to check that
	\[
	\lim_{\ep\to0} |\cN_\ep(s_j)|=0, \qquad \lim_{\ep\to 0} \cH^1\left(\left\{ x\,\,|\,\,d(x,s_j)=\ep\right\}\right)=2\cH^1(s_j),
	\]
	for any $j=1,...,J_i$. For $\ep$ small enough we have that
	\[
	|\cN_\ep(S_i)|\le \sum_{j=1}^{J_i} |\cN_\ep(s_j)|, \qquad  \cH^1\left(\{ x\,\,|\,\,d(x,S)=\ep \}\right)\le \sum_{j=1}^{J_i} \cH^1\left(\left\{ x\,\,|\,\,d(x,s_j)=\ep\right\}\right),
	\]
	and thus \eqref{eq10} follows passing to the limit $\ep\to0$ using the fact that there are only finitely many connected components $S_i$.\\
	We observe that since $\cH^1(S)=\cH^1(\overline{S})$ and $\overline{S}$ is $1$-rectifiable, \eqref{eq10} also follows by applying Theorem 3.2.39 in~\cite{Fe69}.
\end{remark}

\begin{prop}\label{proplimsup1}
	Let $\hat{E}$ be a bounded smooth set. Then there exists a sequence $\tilde{E}_\ep$ of bounded connected smooth sets such that
	\beq
		\tilde{E}_\ep\xrightarrow[\ep\to0]{}\hat{E} \qquad\mbox{ in }L^1,
	\eeq
	\beq
		\limsup_{\ep\to 0} P_C(\tilde{E}_\ep) \le P(\hat{E}) + 2St(\hat{E}).
	\eeq
\end{prop}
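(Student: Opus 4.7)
The plan is to build an indecomposable competitor by attaching to $\hat E$ the $\ep$-thickening of a Steiner tree connecting its connected components, and then to smooth the resulting set via Proposition \ref{prop2}. Concretely, let $S$ be a Steiner tree for $\overline{\hat E^1}$, so that $\cH^1(S)=St(\hat E)$ and $\hat E\cup S$ is connected by the very definition of the Steiner problem. By Lemma \ref{lem:FiniteTree}, $S$ is a finite tree whose endpoints lie on $\pa\hat E$ and meet $\pa\hat E$ orthogonally. Set $S_\ep:=\cN_\ep(S)$ and
\[
F_\ep:=\hat E\cup S_\ep,
\]
which is an open bounded set with $\cH^1(\pa F_\ep)<+\infty$.

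The key structural point is that $F_\ep$ is topologically connected, hence measure-theoretically indecomposable (being an open set of finite perimeter, by the remark recorded right after the definition of decomposability). Indeed, since $S$ is closed, for any $x\in S_\ep$ there exists $y\in S$ with $|x-y|<\ep$, and the segment $[x,y]$ is entirely contained in $S_\ep$ because every intermediate point $z$ still satisfies $d(z,S)\le|z-y|<\ep$. Thus $S_\ep$ retracts by straight segments onto $S$, and
\[
F_\ep=(\hat E\cup S)\cup S_\ep
\]
is the union of the connected set $\hat E\cup S$ with an open set each of whose components meets $S\subset\hat E\cup S$; so $F_\ep$ is connected.

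For the perimeter estimate I would use subadditivity together with $P(S_\ep)\le\cH^1(\pa S_\ep)$:
\[
P(F_\ep)\le P(\hat E)+P(S_\ep)\le P(\hat E)+\cH^1\bigl(\{x:d(x,S)=\ep\}\bigr),
\]
and invoke \eqref{eq10} to conclude $\limsup_{\ep\to 0}\cH^1\{x:d(x,S)=\ep\}\le 2\cH^1(S)=2St(\hat E)$. The $L^1$ convergence is immediate from $|F_\ep\De\hat E|\le|S_\ep|\to0$, again by \eqref{eq10}. Finally, since each $F_\ep$ is essentially bounded, of finite perimeter and indecomposable, Proposition \ref{prop2}~ii) provides bounded smooth connected sets approximating $F_\ep$ in $L^1$ with perimeters converging to $P(F_\ep)$; a standard diagonal extraction then yields the required sequence $\tilde E_\ep$.

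The delicate step is verifying genuine (measure-theoretic) indecomposability of $F_\ep$: it rests on Lemma \ref{lem:FiniteTree}, which ensures that $S$ is a finite tree (so that both $|S_\ep|$ and $\cH^1(\pa S_\ep)$ have the expected limits), together with the line-segment connectedness argument above. The orthogonality at the endpoints recorded in Lemma \ref{lem:FiniteTree} is not strictly required for connectedness here but is what makes the thickening $S_\ep$ attach cleanly to $\hat E$ along small arcs of $\pa\hat E$, and it will reappear when the analogous lim-sup estimate involving $St_c(E)$ is treated.
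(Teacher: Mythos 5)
Your proof is correct and follows essentially the same route as the paper: thicken the Steiner tree $S$ to $S_\ep=\cN_\ep(S)$, attach it to $\hat E$ to get the indecomposable set $\hat E\cup S_\ep$, bound the perimeter via $P(\hat E\cup S_\ep)\le P(\hat E)+\cH^1(\{x:d(x,S)=\ep\})$ together with \eqref{eq10}, and finish with a diagonal argument. The only cosmetic difference is the final smoothing step (the paper directly regularizes the finitely many corner points of $\pa(\hat E\cup S_\ep)$, while you invoke Proposition \ref{prop2}(ii)), which is a harmless variant.
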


\begin{proof}
	Let $S$ be the Steiner tree of $\overline{\hat{E}^1}$ and let $S_\ep=\cN_\ep(S)$. Define $E_\ep=E\cup S_\ep$. The set $S$ is a finite tree with endpoints on $\pa E$ and such that every other vertex is a triple point where edges meet forming three angles equal to $\frac{2}{3}\pi$ (\cite{PaSt13}). Hence for $\ep$ small enough the set $E_\ep$ is connected, indecomposable, and there exist finitely many points $p_1,...,p_k\in\pa E_\ep$ such that $\pa E_\ep\sm\{p_1,..,p_k\}$ is smooth. Hence one can clearly approximate $E_\ep$ by bounded connected smooth sets $E_{\ep,m}$ with $|E_{\ep,m}\De E_\ep|<\frac{1}{m}$ and $|P(E_{\ep,m})-P(E_\ep)|<\frac{1}{m}$. By a diagonal argument and using \eqref{eq10} we get the desired sequence $\tilde{E}_\ep$.
\end{proof}

\begin{prop}\label{proplimsup2}
	Let $\hat{E}$ be a bounded smooth set. Then there exists a sequence $\tilde{F}_\ep$ of bounded simply connected smooth sets such that
	\beq
	\tilde{F}_\ep\xrightarrow[\ep\to0]{}\hat{E} \qquad\mbox{ in }L^1,
	\eeq
	\beq
	\limsup_{\ep\to 0} P_S(\tilde{F}_\ep) \le P(\hat{E}) + 2St(\hat{E}) + 2St_c(\hat{E}).
	\eeq
\end{prop}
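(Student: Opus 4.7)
The plan is to extend the strategy of Proposition \ref{proplimsup1} by modifying $\hat{E}$ in two stages: first thicken the Steiner tree $S$ for $\overline{\hat{E}^1}$ to glue together the connected components of $\hat{E}$, and then excise a tubular neighborhood of the Steiner tree $S^c$ for $\overline{\hat{E}^0}$ to slit every hole open onto the exterior. I would choose $S$ and $S^c$ as Steiner trees of $\overline{\hat{E}^1}$ and $\overline{\hat{E}^0}\cap\overline{B_R(0)}$ respectively (with $R$ large enough so that the choice is irrelevant). By Lemma \ref{lem:FiniteTree} both trees are finite and meet $\pa\hat{E}$ orthogonally and only at endpoints, and by minimality we may take $S\con \R^2\sm\mathring{\hat{E}}$ and $S^c\con\overline{\hat{E}}$. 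I would then set
\beqs
F_\ep := \big(\hat{E}\cup\cN_\ep(S)\big)\sm\overline{\cN_{\ep^2}(S^c)}.
\eeqs
The asymmetric scales $\ep$ and $\ep^2$ are a technical device to handle the case where $S$ and $S^c$ share an endpoint $p\in\pa\hat{E}$: the closed ball $\overline{B_{\ep^2}(p)}$ removed by $\cN_{\ep^2}(S^c)$ is strictly contained in the ball $B_\ep(p)$ carried by $\cN_\ep(S)$, so the outward tube of $\cN_\ep(S)$ remains attached to $\hat{E}$ through the annular neck $B_\ep(p)\sm\overline{B_{\ep^2}(p)}$.

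For $\ep$ sufficiently small I would check that $F_\ep$ is a bounded open set whose boundary is smooth except at the finitely many triple junctions of the trees and transversal intersections of $\pa\cN_\ep(S)$ and $\pa\cN_{\ep^2}(S^c)$ with $\pa\hat{E}$, and, crucially, that $F_\ep$ is simply connected. Connectedness of $\hat{E}\cup\cN_\ep(S)$ follows verbatim from the proof of Proposition \ref{proplimsup1}. The subsequent removal of $\overline{\cN_{\ep^2}(S^c)}$ does not disconnect the set: since $S^c$ is a finite tree whose endpoints lie on $\pa\hat{E}\con \pa(\hat{E}\cup\cN_\ep(S))$, its thickening produces only ``slits'' reaching the boundary of $\hat{E}\cup\cN_\ep(S)$, and a tree-shaped cut cannot separate a planar region. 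For the absence of holes I would write
\beqs
\R^2\sm F_\ep \,=\, \overline{\cN_{\ep^2}(S^c)}\,\cup\,\big(\R^2\sm(\hat{E}\cup\cN_\ep(S))\big),
\eeqs
and note that every connected component of $\R^2\sm(\hat{E}\cup\cN_\ep(S))$ is contained in a connected component of $\overline{\hat{E}^0}$, whereas by the very definition of $S^c$ the thickening $\overline{\cN_{\ep^2}(S^c)}$ reaches transversally into each component of $\overline{\hat{E}^0}$ (its endpoints lie on $\pa\hat{E}$). Hence the right-hand side is connected, so $F_\ep$ is simply connected. I expect this topological verification to be the main delicacy of the argument.

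Once $F_\ep$ has been shown to be smooth up to finitely many points and simply connected, everything else is routine. By subadditivity of the perimeter,
\beqs
P(F_\ep)\le P(\hat{E})+P(\cN_\ep(S))+P(\cN_{\ep^2}(S^c)),
\eeqs
and \eqref{eq10} applied with radius $\ep$ to $S$ and with radius $\ep^2$ to $S^c$ yields $\limsup_{\ep\to0}P(F_\ep)\le P(\hat{E})+2St(\hat{E})+2St_c(\hat{E})$. The $L^1$ convergence $F_\ep\to\hat{E}$ follows from $|F_\ep\De\hat{E}|\le|\cN_\ep(S)|+|\cN_{\ep^2}(S^c)|\to 0$, again by \eqref{eq10}. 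Finally, a standard local mollification at the finitely many non-regular points of $\pa F_\ep$, which does not affect the topology, produces bounded smooth simply connected sets $F_{\ep,m}$ with $|F_{\ep,m}\De F_\ep|<1/m$ and $|P(F_{\ep,m})-P(F_\ep)|<1/m$; a diagonal extraction $\tilde{F}_\ep:=F_{\ep,m(\ep)}$ then gives the required sequence.
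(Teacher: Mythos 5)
Your overall strategy — thicken $S$ to glue the components of $\hat{E}$, excise a tube around $S^c$ to slit the holes open — is the same as the paper's, which sets $U_\ep=\overline{\cN_\ep(S)}\sm\mathring{\hat{E}}$, $U_\ep^c=\cN_\ep(S^c)\cap\overline{\hat{E}}$ and $\tilde{E}_\ep=(\overline{\hat{E}}\sm U_\ep^c)\cup U_\ep$, with \emph{equal} scales. The $L^1$ convergence and the limsup estimate go through in both versions. However, your treatment of the topology at a shared endpoint $v\in S\cap S^c$ contains a genuine error, and the asymmetric scales you introduce to handle it are precisely what make the construction fail. Take $\hat{E}=\{1<|x|<2\}\cup B_{0.1}((0.5,0))$, with $S=[(0.6,0),(1,0)]$ and $S^c=[(1,0),(2,0)]$ sharing the endpoint $v=(1,0)$. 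Since $\overline{B_{\ep^2}(v)}\subset B_\ep(v)\subset\cN_\ep(S)\subset F_\ep$, the annular neck $B_\ep(v)\sm\overline{B_{\ep^2}(v)}$ that you use to keep the outer tube attached to $\hat{E}$ simultaneously \emph{shields} the slit from the hole: the inner end of $\overline{\cN_{\ep^2}(S^c)}$ never meets $\{|x|\le 1\}\sm D\sm\cN_\ep(S)$. Hence $\R^2\sm F_\ep$ decomposes into two components (the exterior together with $\overline{\cN_{\ep^2}(S^c)}$, and the region inside the inner circle of the annulus), so $F_\ep$ is \emph{not} simply connected, and the topology-preserving mollification at the end cannot repair this.

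The obstruction is structural: a plain tube construction cannot simultaneously keep $\cN(S)$ attached to $\hat{E}$ across $v$ and have $\cN(S^c)$ reach the hole across $v$, whichever scale is wider wins. The paper's resolution is a deliberate topological surgery, not a smoothing: with equal scales, $\pa\tilde{E}_\ep$ near such a $v$ consists of two parallel slit segments $s_1,s_2$ crossing an arc $\sigma\subset\pa\hat{E}$, and the paper deletes the piece of $\sigma$ between the two corners and reconnects $s_{1,l}$ to $s_{2,r}$ while joining the remaining pieces of $\sigma$ to $s_{1,r}$ and $s_{2,l}$ (its case 3 and Figure 2, right). This rerouting changes the boundary's combinatorics so that, after it, each connected component of $\mathring{\tilde{E}}_\ep$ is a disk and their closures meet only at the two corners of a shared endpoint, giving a simply connected set. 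That explicit reconnection is the key idea missing from your proposal; without it, or some equivalent surgery, the set $F_\ep$ generically retains a hole whenever $S$ and $S^c$ share an endpoint.
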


\begin{proof}
	Let $S,S^c$ be the finite Steiner trees of $\overline{\hE^1},\overline{\hE^0}$. We can assume that $S,S^c$ are closed. Let us define
	\beqs
		U_\ep=\overline{\cN_\ep(S)}\sm \mathring{\hat{E}}, \qquad \qquad U_\ep^c=\cN_\ep(S^c)\cap \overline{\hE}.
	\eeqs
	Let also
	\beqs
		\tilde{E}_\ep=\big(\overline{\hE}\sm U^c_\ep\big)\cup U_\ep,
	\eeqs
	which is closed. Suppose $\ep$ is sufficiently small so that if $A,B$ are two connected components of $S$ (or of $S^c$), then $\overline{\cN_\ep(A)}\cap \overline{\cN_\ep(B)}=\epty$. We can also assume that if $\cN_\ep(A_S)\cap \cN_\ep(B_{S^c})\neq\epty$ for two connected components $A_S\con S$ and $B_{S^c}\con S^c$, then $A_S\cap B_{S^c}=\{v\}\neq\epty$ where $v$ is an endpoint of both $S$ and $S^c$. Observe that $\si(\pa \hat{E})=\cH^1(S)+\cH^1(S^c)$, where $\si(\pa \hE)$ is the infimum of the Steiner problem of $\pa \hat{E}$. By \eqref{eq10} we have that $\tE_\ep\to\hE$ in $L^1$ sense and $\limsup_\ep P(\tE_\ep)\le P(\hE)+2\si(\pa\hE)=P(\hE)+2St(\hat{E}) + 2St_c(\hat{E})$ as $\ep\to0$.\\
	Now we modify $\tE_\ep$ in order to obtain $\tF_\ep$ preserving $L^1$ convergence to $\hE$ and $\limsup$ estimate on the perimeters. More precisely, we want to regularize $\pa\tE_\ep$ around its finitely many corners, i.e. the points of $\pa\tE_\ep$ at which $\pa\tE_\ep$ is not smooth. This will lead us to a simple smooth curve which will be $\pa\tF_\ep$. Observe that the vertices of $S,S^c$ are only endpoints or triple points, and if a point $v$ is a vertex of both $S$ and $S^c$ then $v\in\pa\tE_\ep$, $v$ is an endpoint of both $S$ and $S^c$, and both the edge of $S$ and $S^c$ with endpoint at $v$ are orthogonal to $\pa\hE$ at $v$.\\
	Any corner $p$ of $\pa\tE_\ep$ corresponds to a vertex $v$ of $S$ or $S^c$, in the sense that, for $\ep$ small, $p\in B_{2\ep}(v)$ for a unique vertex $v$. We call edges of $\pa\tE_\ep$ the smooth curves having as endpoints two corners of $\pa\tE_\ep$. We want to change $\tE_\ep$ modifying such edges around the singular points corresponding to a given vertex $v$. More precisely, given a vertex $v$ we modify the edges $\si_k$ inside $B_{2\ep}(v)$ according to the following instructions.\\
	1) Let $v\in S$ be a triple point of $S$. Then modify inside $B_{2\ep}(v)$ the six edges of $\pa\tE_\ep$ corresponding to the three singular points $p_1,p_2,p_3$ associated to $v$ by smoothing the corners around $p_1,p_2,p_3$. Leave those edges unchanged out of $B_{2\ep}(v)$. Also modify $\tE_\ep$ correspondingly. \\
	2) Let $v\in S\sm S^c$ be an endpoint of $S$. Then modify inside $B_{2\ep}(v)$ the four edges of $\pa\tE_\ep$ corresponding to the two singular points $p_1,p_2\in \pa\tE_\ep$ associated to $v$ by smoothing the corners around $p_1,p_2$. Leave those edges unchanged out of $B_{2\ep}(v)$. Also modify $\tE_\ep$ correspondingly. See also Figure \ref{fig2} on the left.
	
		\begin{figure}[h]
		\begin{center}
			\begin{tikzpicture}[scale=2.5]
			\fill[color=gray!50]
			plot[domain=-60:60] ({cos(\x)},{sin(\x)}) -- cycle;
			\fill[color=gray!50]
			(0.5,-0.1)--(2,-0.1)--(2,0.1)--(0.5,0.1)--cycle;
			\draw
			plot[domain=-60:-5.711] ({cos(\x)},{sin(\x)});
			\draw
			plot[domain=5.711:60] ({cos(\x)},{sin(\x)});
			\draw
			(0.995,0.1)--(2,0.1);
			\draw
			(2,-0.1)--(0.995,-0.1);
			\filldraw[black]
			(1,0)circle(0.03);
			\path
			(0.8,0)node{$v$};
			\draw[dashed, line width=1pt]
			plot[domain=30:60] ({cos(\x)},{sin(\x)}) (0.866,0.5)to[out=-60, in=180](1.3,0.1)--(2,0.1);
			\draw[dashed, line width=1pt]
			plot[domain=-60:-30] ({cos(\x)},{sin(\x)}) (0.866,-0.5)to[out=60, in=180](1.3,-0.1)--(2,-0.1);
			\end{tikzpicture}\qquad\qquad\qquad
			\begin{tikzpicture}[scale=0.6]
			\fill[color=gray!50]
			(-3, 0.5)--(0, 0.5)--(0,1)to[out=90, in=-80](-0.35,4)--(-3,4)--(-3, 0.5);
			\fill[color=gray!50]
			(-3,- 0.5)--(0,- 0.5)--(0,-1)to[out=270, in=80](-0.35,-4)--(-3,-4)--(-3,- 0.5);
			\fill[color=gray!50]
			(0, 0.5)--(3, 0.5)--(3,- 0.5)--(0,- 0.5)--(0, 0.5);
			\draw
			(-3,0.5)--(0,0.5)--(0,1)to[out=90, in=-80](-0.35,4);
			\draw
			(-3,-0.5)--(0,-0.5)--(0,-1)to[out=270, in=80](-0.35,-4);
			\draw
			(3,-0.5)--(0,-0.5)--(0,0.5)--(3,0.5);
			\filldraw[black]
			(0,0)circle(0.1);
			\path
			(-0.65,0)node{$v$};
			\draw[dashed, line width=1pt]
			(-0.35,4)to[out=-80, in=90](0,1)to[out=-90, in=180](0.5,0.5)--(3,0.5);
			\draw[dashed, line width=1pt]
			(-3,0.5)--(-1.5,0.5)to[out=0, in=180](1.5,-0.5)--(3,-0.5);
			\draw[dashed, line width=1pt]
			(-3,-0.5)--(-0.5,-0.5)to[out=0, in=90](0,-1)to[out=270, in=80](-0.35,-4);
			\path
			(3.5,0.5)node{$s_1$};
			\path
			(3.5,-0.5)node{$s_2$};
			\path
			(0.1,4)node{$\si$};
			\end{tikzpicture}
		\end{center}
	\caption{The two cases of $v$ endpoint of $S$ in the proof of Proposition \ref{proplimsup2}: on the left $v\in S\sm S^c$, on the right $v\in S\cap S^c$. The gray area denotes $\tE_\ep$. The continuous lines denote $\pa\tE_\ep$, the dashed lines denote the modifications smoothing the corners.}\label{fig2}
	\end{figure}
	
	\noindent 3) Let $v\in S\cap S^c$ be endpoint of both $S$ and $S^c$. Since both the edges of $S$ and $S^c$ having $v$ as endpoint are orthogonal to $\pa\hE$, around $v$ the boundary $\pa\tE_\ep$ is determined by two parallel segments $s_1,s_2$ together with a third curve $\si\con\pa\hE$ meeting once each segment (see Figure \ref{fig2} on the right) at the two corners $p_1,p_2$ corresponding to $v$. Independently of the choice of $s_1$ or $s_2$, desingularize $\pa\tE_\ep$ by modifying the edges as depicted in Figure \ref{fig2} on the right. More precisely, parametrizing $\si\cap B_{2\ep}(v)$ with constant velocity on $[0,1]$, we can say that $\si$ splits $s_1$ (and $s_2$) into two parts $s_{1,l},s_{1,r}$ (and $s_{2,l},s_{2,r}$) respectively on the left or on the right of the parametrization of $\si$. So delete the part of $\si$ between the two intersections $p_1,p_2$, connect smoothly $s_{1,l}$ with $s_{2,r}$, and then desingularize the remaining two corners joining one piece of $\si$ with $s_{1,r}$ and the other piece of $\si$ with $s_{2,l}$ without crossing (see Figure \ref{fig2} on the right).\\
	4) Let $v\in S^c\sm S$ be a vertex. Modify the edges corresponding to $v$ by the same rules of points 1) and 2).\\
	
	\noindent Now call $\tF_\ep$ the resulting set. By construction $\pa \tF_\ep$ is smooth, hence
	\[
	\pa\tF_\ep=\sqcup_{i=1}^K (\si_i),
	\]
	for a finite number of smooth closed simple curves $\si_i$. We want to prove that $K=1$, so that $\tF_\ep$ is the interior of a smooth closed simple curve, and thus $\tF_\ep$ is simply connected and then the proof is completed.\\
	Let $J^\pm_j$ be the finitely many curves given by Theorem \ref{thm1} applied to $\hE$. Call
	\beqs
		E_j=int(J^+_j), \qquad H_{-j}=\overline{int(J^-_j)},
	\eeqs
	for any possible $j$. If $A$ is a connected component of $\mathring{\hat{E}}$, then we can write $A=E_j\sm \sqcup_{i=1}^r H_{-j_i}$ for some $j,j_i$.\\
	We claim that $A\sm S^c$ is simply connected.\\
	In fact $A$ is homeomorphic to $B\sm\{p_1,...,p_r\}$, where $B$ denotes the open ball in $\R^2$ and $p_1,...,p_r\in B$. Also $A\sm S^c$ is homeomorphic to $B\sm T$, where $T$ is a closed planar graph without cycles with vertices at points $V_T=\{q_1,...,q_l,p_1,...,p_r,t_1,...,t_s\}$, where $q_i\in\pa B$ are endpoints and $t_i\in B$ are triple points. Therefore $A\sm S^c$ is homeomorphic to $B\sm \sqcup_{i=1}^l L_i$ where $L_i\simeq[0,1]$ is an embedded curve contained in $\overline{B}$ with $L_i\cap \pa B= \{q_i\}$. Hence $A\sm S^c$ is simply connected.\\
	By the above claim, for $\ep$ small, also $A\sm U^c_\ep=A\sm\big(\cN_\ep(S^c)\cap A\big)$ and the latter is homeomorphic to $ A\sm S^c$, and such sets are simply connected.\\
	Consider now $S_1^A,...,S_{R_A}^A$ the finitely many connected components of $S$ which are connected to $A\sm U^c_\ep$ (observe that these are not all the connected components of $S$ touching $\overline{A}$, but these are the connected components of $S$ having endpoints on $\overline{A}$ which are not endpoints of $S^c$). For any $i=1,...,R_A$ the set $\cN_\ep(S^A_i)\sm\overline{\hE}$ is homeomorphic to $B$. Also each $\cN_\ep(S^A_i)\sm\mathring{{\hE}}$ is simply connected. Hence by construction the open set
	\beq\label{eq12}
		V_A:=int\bigg[A\sm U_\ep^c \cup \bigsqcup_{i=1}^{R_A} \big( \cN_\ep(S_i^A)\sm \mathring{{\hE}} \big)\bigg],
	\eeq
	where $int(\cdot)$ denotes the interior of a set $(\cdot)$, is homeomorphic to $B$. By construction, for $\ep$ sufficiently small the finitely many connected components of $\mathring{\tE}_\ep$ are either a finite union of sets the form $V_A,V_{A'}$ having in common some $S^A_i=S^{A'}_j$, or they are of the form
	\beq
		\cN_\ep(S_m)\sm \overline{\hE},
	\eeq
	where $S_m$ is a connected component of $S$ such that each endpoint of $S_m$ is also an endpoint of $S^c$. In any case each connected component of $\mathring{\tE}_\ep$ is homeomorphic to $B$. Also the closed set $\tE_\ep$ is connected, and the closures of two connected components of $\mathring{\tE}_\ep$ are either disjoint or they intersect exactly in two points which are corners of $\pa\tE_\ep$ corresponding to a vertex $v\in S\cap S^c$ as represented in Figure \ref{fig2} on the right.\\
	Hence the finitely many modifications on the boundary $\pa\tE_\ep$ by construction lead to a simply connected smooth set $\tF_\ep$, and the proof is completed.
\end{proof}

\textcolor{white}{.}

\subsection{Approximation}

Here we want to prove that a set of finite perimeter $E$ with $\cH^1(\pa E\De \pa^* E)=0$ can be approximated by a sequence of smooth sets verifying the suitable $\limsup$ inequalities.

\begin{prop}\label{propapprox}
	Let $E$ be an essentially bounded set of finite perimeter satisfying $\pa E=\pa^* E \mbox{ mod }\cH^1$. Then there exist a sequence $\hE_\ep$ of bounded smooth sets of finite perimeter such that\\
	\beq
	\hE_\ep\xrightarrow[\ep\to0]{}E,
	\eeq
	\beq
	\limsup_{\de\to 0} P(\hE_\ep) + 2St(\hE_\ep) \le P(E) + 2St(E),
	\eeq
	\beq
	\limsup_{\de\to 0} P(\hE_\ep) + 2St(\hE_\ep) + 2St_c(\hE_\ep) \le P(E) + 2St(E) + 2St_c(E).
	\eeq
\end{prop}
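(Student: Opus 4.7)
The plan is to approximate $E$ by smooth sets in three stages: first truncate the boundary decomposition of Theorem \ref{thm2} to keep only finitely many Jordan boundaries, then apply Schmidt's smoothing from Theorem \ref{thmSchmidt}, and finally verify that the Steiner lengths pass to the limit upper semi-continuously.

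For the truncation stage, the essential boundedness of $E$ together with the finiteness of $P(E)$ and the isoperimetric inequality (as in the proof of Lemma \ref{lem1}) imply that, for every $\eta>0$, only finitely many $J^\pm_j$ in the decomposition of Theorem \ref{thm2} satisfy $\cH^1(J^\pm_j)\geq\eta$. I would fix a threshold $\de$ and define $E_\de$ from the representation in Theorem \ref{thm2} by discarding every Jordan curve $J^\pm_j$ with $\cH^1(J^\pm_j)<\de$ (equivalently, with interior of Lebesgue measure smaller than $c\de^2$), keeping only the finitely many remaining components and their finitely many surviving holes. By $L^1$ absolute continuity of the associated series of characteristic functions, $|E_\de\Delta E|\to 0$, and by the additivity in Theorem \ref{thm2}$(vi)$ together with the $L^1$-lower semicontinuity of the perimeter, $P(E_\de)\to P(E)$ as $\de\to 0$.

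For the smoothing stage, every M-connected component of $E_\de$ is now indecomposable with only finitely many holes, so Lemma \ref{lem2} and Remark \ref{rem2} allow the application of Theorem \ref{thmSchmidt} to each component and, separately, to each surviving hole, approximating each one from within. Proceeding exactly as in the proof of Theorem \ref{thmsmoothsets}, with tolerances small compared to $\de$, one obtains a bounded smooth set $\hat E_\de$ with $\hat E_\de\to E$ in $L^1$ and $P(\hat E_\de)\to P(E)$; moreover the construction can be arranged so that $\overline{\hat E_\de^1}\subset\cN_\de(\overline{E^1})$, each M-connected component of $\overline{E^1}$ meets $\overline{\hat E_\de^1}$, and the analogous Hausdorff proximity holds for $\overline{\hat E_\de^0}$ and $\overline{E^0}$ inside a large fixed ball.

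For the third and most delicate stage, let $S$ be an optimal Steiner tree for $\overline{E^1}$. Since $\overline{E^1}\cup S$ is compact and connected and since each connected component of $\overline{\hat E_\de^1}$ lies within $\de$ of some connected component of $\overline{E^1}\cup S$, adding finitely many bridging segments of total length controlled by the (finite) number of connected components of the Steiner tree times $\de$ produces a competitor for the Steiner problem on $\overline{\hat E_\de^1}$ of length at most $\cH^1(S)+o(1)$, so $\limsup_{\de\to 0}St(\hat E_\de)\leq St(E)$. Applying the same argument to the complements within a large fixed ball gives $\limsup_{\de\to 0}St_c(\hat E_\de)\leq St_c(E)$, and combining with $P(\hat E_\de)\to P(E)$ yields both $\limsup$ inequalities in the statement. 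The main obstacle is precisely this last step: one must ensure that the Schmidt smoothing does not fragment any M-connected component of $E$ into pieces sitting far from $\overline{E^1}$, which would force extra Steiner length to reconnect; this is handled by performing the approximation component-by-component so that each resulting connected piece of $\hat E_\de$ sits inside a tubular neighborhood of the indecomposable component of $E$ it approximates.
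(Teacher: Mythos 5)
Your overall plan matches the paper's: truncate the boundary decomposition of Theorem \ref{thm2} to finitely many pieces, smooth each piece from within via Theorem \ref{thmSchmidt}/Remark \ref{rem2}, and then produce a competitor for the Steiner problem associated with the approximating set. The truncation details differ slightly (the paper first fills each component's small holes using a tail-sum threshold $\sum_{j\in J_{i,\ep}}P(T_{i,j})<\ep^2$, then discards components with $|\tilde Y_i|\le 2\ep$, rather than discarding all $J^\pm_j$ with $\cH^1(J^\pm_j)<\de$), but this is inessential.

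The genuine gap is in the third stage, where you build the Steiner competitor for $\overline{\hat E_\de^1}$. You claim the extra length is ``controlled by the (finite) number of connected components of the Steiner tree times $\de$.'' There are two problems. First, for a general essentially bounded finite-perimeter set $E$, the set $\overline{E^1}$ may have infinitely many connected components, and the Steiner tree $S$ may then have infinitely (though countably) many connected components — finiteness of $S$ is proved in Lemma \ref{lem:FiniteTree} only when $E$ is smooth, which is exactly what you are trying to establish, not what you may assume. Second, and more fundamentally, the competitor must account for the \emph{discarded} small components $Y_i$ of $E$: the tree $S$ connects all pieces of $\overline{E^1}$, possibly using small components as stepping stones (a component of $S$ may end on a small $Y_i$, and a second component of $S$ continue from there to a large $Y_j$), and once those small $Y_i$ are deleted the set $S\cup\overline{\hat E_\de^1}$ may fail to be connected. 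Bridging through a discarded $Y_i$ costs on the order of its diameter, and even if each such diameter is small there is no a priori bound on how many of them occur, so ``number of components of $S$ times $\de$'' does not close the estimate. The paper's fix is to put the boundaries $\pa\tilde Y_i$ of \emph{all} discarded components into the competitor tree: the resulting extra length is $\sum_{i\in I\setminus I_\ep}P(Y_i)$, which tends to $0$ as the tail of the convergent series $\sum_i P(Y_i)\le P(E)$, plus finitely many short segments of length $O(\ep^3)$ connecting $S$ and the smoothed pieces, controlled via the counting bound $N_\ep\le |E|/\ep$. A parallel correction is needed for $S^c$: when you fill a small hole $T_{i,j}$, its boundary $\pa T_{i,j}$ must be appended to $S^c$, since $S^c$ may have used $T_{i,j}$ as a stepping stone between larger holes and the exterior; this again costs only the tail $\sum P(T_{i,j})$. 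Without these additions, the key inequality $\limsup_\de St(\hat E_\de)\le St(E)$ (and its analogue for $St_c$) is not established.
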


\begin{proof}
	By Remark \ref{rem1} we can assume that $E=\mathring{E}$. Fix $\ep\in(0,1)$. Adopt the following notation.
	\begin{itemize}
		\item $Y_i$, for $i \in I$, are the components of $E$ given by Theorem \ref{thm2}.
		\item $T_{i,j}$, for $j \in J_i$, are the holes of $Y_i$.
		\item $J_{i,\ep}\con J_i$ is a subset such that
		\begin{itemize}
			\item $J_i \sm J_{i,\ep}$ is finite,
			\item $ \sum_{j\in J_{i,\ep}} P(T_{i,j}) < \ep^2$,
		\end{itemize}
		i.e. $J_{i,\ep}$ contains the indexes of the small holes of $Y_i$.
		\item $\tilde{Y}_i = Y_i \cup  \left( \bigcup_{j \in J_{i,\ep}} T_{i,j} \right)$ is the filling of the small holes of $Y_i$.
		\item $I_\ep = \{ i \in I \ : \ |\tilde{Y}_i| > 2\ep \}$ are the indexes of the not too small sets $\tY_i$.
		\item For $i\in I_\ep$ the set $Y_{i,\ep}$ is the smooth open set approximating $\tY_i$ from within as given by Theorem \ref{thmSchmidt} with respect to the parameter $\delta = \ep^3$ (this is possible by Remark \ref{rem2} since $\tY_i$ is indecomposable with a finite number of holes by construction).
		\item $N_\ep = \sharp(I_\ep)$.
		\item $ \hE_\ep = \bigcup_{i \in I_\ep} Y_{i,\ep}$.
	\end{itemize}
	We need to show that such $\hE_\ep$ satisfies the thesis.\\
	
	\noindent Since for any $j_0\in J_{i_\ep}$ it holds that $P(T_{i,j_0})\le \sum_{j \in J_{i,\ep}} P(T_{i,j}) < \ep^2<1$, by isoperimetric inequality we have that
	\beq\label{eq13}
		\sum_{j \in J_{i,\ep}} |T_{i,j}| \le C_{iso} \sum_{j \in J_{i,\ep}} P(T_{i,j})^2 \le  C_{iso} \sum_{j \in J_{i,\ep}} P(T_{i,j}) \le  C_{iso} \ep^2,
	\eeq
	where $C_{iso}=\frac{1}{4\pi}$ is the isoperimetric constant in dimension $2$. Observe that $\ep<1<1/C_{iso}$. If $i \in I_\ep$, then \eqref{eq13} implies that $2\ep < |\tilde{Y}_i| = |Y_i|	+  \sum_{j \in J_{i,\ep}} |T_{i,j}| \le |Y_i| + C_{iso}\ep^2$, and thus $|Y_i|>\ep$. Hence
	\beq\label{eq16}
	\ep N_\ep \le \sum_{i \in I_\ep} |Y_i| \le \sum_{i \in I} |Y_i| \le |E| .
	\eeq
	Since $P(\tY_i)\le P(Y_i)$ it holds that
	\beq\label{eq14}
	 \begin{split}
	P(\hE_\ep) & \le  \sum_{i \in I_\ep} P(Y_{i,\ep})
	 \le   \sum_{i \in I_\ep} \Big( P(\tilde{Y}_i) + \ep^3 \Big)  \le   \sum_{i \in I_\ep} P(Y_i) + \ep^3N_\ep
	 \le   \sum_{i \in I} P(Y_i) + \ep^3N_\ep \le \\
	& \le P(E) + \ep^3N_\ep . 
	\end{split}
	\eeq\\
	
	\noindent We claim that
	\beq\label{eq24}
		 \hE_\ep\xrightarrow[\ep\to0]{} E \qquad\mbox{ in $L^1$}.
	\eeq
	In fact let us estimate
	\beq\label{eq17}
	|E \Delta \hE_\ep| \le
	\sum_{i \in I\sm I_\ep} |Y_i| +
	\sum_{i \in I_\ep} |\tilde{Y}_i \Delta Y_{i,\ep}| +
	\sum_{\substack{i \in I_\ep \\ j \in J_{i,\ep}}} |T_{i,j}| .
	\eeq
	Since $\sum_{i \in I} |Y_i|=|E|$, then
	\beq\label{eq20}
		\lim_{\ep\to0} \sum_{i \in I\sm I_\ep} |Y_i|=0.
	\eeq
	By Lemma \ref{lem2} we can assume that any $\tY_i$ is open and $P(\tY_i)=\cH^1(\pa\tY_i)$. Hence, since by Theorem \ref{thmSchmidt} we have that $\tilde{Y}_i \Delta Y_{i,\ep} \subset \mathcal{N}_{\ep^3}(\pa \tilde{Y}_i)$, it follows either by a direct argument or using Theorem 3.2.39 in \cite{Fe69} that
	\beq\label{eq15}
		|\tilde{Y}_i \Delta Y_{i,\ep}|\le |\mathcal{N}_{\ep^3}(\pa \tilde{Y}_i)| \le  2(1+\ep)\ep^3 \cH^1(\pa \tilde{Y}_i) = 2(1+\ep)\ep^3 P(\tilde{Y}_i) \le 4\ep^3 P(E),
	 \eeq
	for $\ep\le\ep(i)$ depending on $i$.\\
	Moreover $ \sum_{i \in I_\ep} |\tilde{Y}_i \Delta Y_{i,\ep}| \le |sat(E)| < +\infty$, then	$ \limsup_{\ep \to 0} \sum_{i \in I_\ep} |\tilde{Y}_i \Delta Y_{i,\ep}| < +\infty$. We denote $\ep_h$ a subsequence such that
	\[ \limsup_{\ep \to 0} \sum_{i \in I_\ep} |\tilde{Y}_i \Delta Y_{i,\ep}| =
	\lim_{h \to +\infty} \sum_{i \in I_{\ep_h}} |\tilde{Y}_i \Delta Y_{i,\ep_h}| . \]
	Since $|\tY_i\De Y_{i,\ep}|\to0$ for any $j$, then for all $\eta>0$ there exists $H>0$ such that for all $h>H$ it holds that $ \sum_{i \in I_{\ep_h}\sm I_{\ep_H}} |\tilde{Y}_i \Delta Y_{i,\ep_h}| < \eta/2$. Since $I_{\ep_H}\con I_{\ep_h}$ and $I_{\ep_H}$ is finite, by \eqref{eq15} we can write that
	\[
	 \sum_{i \in I_{\ep_H}} |\tilde{Y}_i \Delta Y_{i,\ep_h}| \le 4\ep_h^3N_{\ep_H} P(E) \le  4\ep_h^3N_{\ep_h} P(E),
	\]
	for any $\ep\le\min\{\ep(i)\,\,|\,\,i\in I_{\ep_H}  \}$.\\
	Taking into account \eqref{eq16} we can choose $h$ large enough so that $4N_{\ep_h}\ep_h^3 P(E) < \eta/2$ and then
	\[
	 \sum_{i \in I_{\ep_h}} |\tilde{Y}_i \Delta Y_{i,\ep_h}| \le
	\sum_{i \in I_{\ep_H}} |\tilde{Y}_i \Delta Y_{i,\ep_h}|
	+ \sum_{i \in I_{\ep_h} \sm I_{\ep_H}} |\tilde{Y}_i \Delta Y_{i,\ep_h}| < \eta .
	\]
	Then
	\[ \limsup_{\ep \to 0} \sum_{i \in I_\ep} |\tilde{Y}_i \Delta Y_{i,\ep}| =
	\lim_{h \to +\infty} \sum_{i \in I_{\ep_h}} |\tilde{Y}_i \Delta Y_{i,{\ep_h}}| < \eta .\]
	Thus, taking $\eta \to 0$, we have
	\beq\label{eq18}
	 \sum_{i \in I_\ep} |\tilde{Y}_i \Delta Y_{i,\ep}| \xrightarrow[\ep\to0]{}0.
	\eeq
	 Finally by \eqref{eq13} we have
	 \beq\label{eq19}
	 \sum_{\substack{i \in I_\ep \\ j \in J_{i,\ep}}} |T_{i,j}| \leqslant C_{iso} \ep^2 N_\ep \xrightarrow[\ep\to0]{} 0. 
	 \eeq
	 Putting together \eqref{eq20}, \eqref{eq18}, \eqref{eq19}, and \eqref{eq17} we obtain the claim \eqref{eq24}.\\
	 
	 \noindent Let $S$ be a Steiner tree of $\overline{E^1}$. We denote $S^*_k$ for $k \in I^S$ a connected component of
	 \[
	 	 S \cup \left( \bigcup_{i \in I\sm I_\ep} \pa \tilde{Y}_i \right)
	  \]
	 such that there exist at least two distinct indexes $i,j \in I_\ep$ such that $S^*_k$ connects $Y_i$ and $Y_j$. Also let
	 \[
	 I^S_k = \{ i \in I \sm I_\ep \ | \ \pa \tilde{Y}_i \subset S^*_k \},
	 \]
	 and denote by $S_{k,j}$ a connected component of $S$ contained in a given $S^*_k$.\\
	 We now prove that
	 \beq\label{eq21}
	 \sharp(I^S)\le \sharp(I_\ep)=N_\ep.
	 \eeq
	 In fact, by minimality, for any couple $(i,j)\in I_\ep\times I_\ep$ with $j>i$ there exists at most one $k\in I^S$ such that $S^*_k$ connects $Y_i$ and $Y_j$.  We define a function $\chi:\{(i,j)\in I_\ep\times I_\ep\,|,j>i\}\to \{0,1\}$ such that $\chi(i,j)=1$ if and only if there exists (a unique) $k\in I^S$ such that $S^*_k$ connects $Y_i$ and $Y_j$. Up to relabeling we can suppose that $\chi(1,2)=1$. By construction $\sharp I^S\le \sharp\chi^{-1}(1)$. Since $\chi(1,2)=1$, then by minimality at most one of the values $\chi(1,3)$ and $\chi(2,3)$ is equal to $1$; that is $\sum_{i=1}^2 \chi(i,3)\le1$. Iterating this argument one has that
	 \beqs
	 	\sum_{i=1}^{j-1} \chi(i,j)\le 1
	 \eeqs
	 for any $j=2,...,N_\ep$. And this implies that $\sharp\chi^{-1}(1)\le \sum_{j=2}^{N_\ep} \sum_{i=1}^{j-1} \chi(i,j)\le N_\ep$, and we have \eqref{eq21}.\\	 
	 
	 \noindent Now for any $S^*_k$ let $I_{k,\ep} = \{ \al \in I_\ep \ | \ S^*_k \text{ is connected to } Y_\al \}$. For $\al \in I_{k,\ep}$, since $\tilde{Y}_\al \Delta Y_{\al,\ep} \subset \mathcal{N}_{\ep^3}(\pa \tilde{Y}_\al) \subset \mathcal{N}_{\ep^3}(\pa Y_\al)$, there exists a segment $s_{\al,k}$ with length less than $\ep^3$ connecting $S^*_k$ and $Y_{\al,\ep}$. Given $S^*_k$, denote by $S_{k,\ep}$ the union
	 \beqs
	 	S_{k,\ep}=\bigcup_j S_{k,j}\cup\bcup_{i\in I^S_k} \pa\tY_i\cup\bcup_{\al\in I_{k,\ep}} s_{\al,k}.
	 \eeqs
	Then
	\beq
		\cH^1(S_{k,\ep}) \le \sum_j\cH^1(S_{k,j}) + \sum_{i \in I^S_k} P(\tilde{Y}_i) + \sharp(I_{k,\ep}) \ep^3 .
	\eeq
	
	\noindent Define also $I^2_\ep = \{ (i,j) \in I_\ep \times I_\ep \ | \ i \neq j, \ \overline{Y_i}\cap\overline{Y_j} \neq \emptyset \}$. Similarly as before, if $(i,j)\in I^2_\ep$ there exists a segment $S_{ij,\ep}$ connecting $Y_{i,\ep}$ and $Y_{j,\ep}$ such that $\cH^1(S_{ij,\ep})\le 2\ep^3$.\\
	
	\noindent Finally define
	\beq
		S_ \ep = \left( \bigcup_{k \in I^S} S_{k, \ep} \right) \cup
		\left( \bigcup_{(i,j) \in I^2_ \ep} S_{ij, \ep} \right) .
	\eeq
	By construction the set $S_\ep\cup\bcup_{i\in I_\ep} Y_{i,\ep}=S_\ep\cup \hE_\ep$ is connected. Then
	\beq\label{eq22}
	\begin{split}
		St(E_ \ep) & \le \cH^1(S_ \ep) \le
	 \sum_{k\in I^S} \cH^1(S_{k, \ep}) + \sum_{(i,j)\in I^2_ \ep} \cH^1(S_{ij, \ep}) \le \\
	& \le   \sum_{k\in I^S}\sum_j \cH^1(S_{k,j}) +
	\sum_{k\in I^S} \sum_{i \in I^S_k} P(\tilde{Y}_i) +
	\sum_{k \in I^S}  \ep^3 \sharp(I_{k, \ep}) + 2 \ep^3 \sharp(I^2_ \ep) \le\\
	& \le  \cH^1(S) + \sum_{i \in I\sm I_ \ep} P(\tilde{Y}_i) +
	 \ep^3\sharp(I^S)\sharp(I_ \ep) + 2 \ep^3 \sharp(I_ \ep^2) \le\\
	& \le \cH^1(S) +  \sum_{i \in I\sm I_ \ep} P(Y_i) +  \ep^3N_ \ep^2 + 2 \ep^3N_ \ep^2 .
	\end{split}
	\eeq
	
	\noindent Analogously let $S^c$ be a Steiner tree of $\overline{E^0}$. Now let
	\[ 
	S^c_\ep = S^c \cup \left( \bigcup_{\substack{i \in I_\ep \\ j \in J_{i,\ep}}} \pa T_{i,j} \right) .
	\]
	Observe that if $T_{i,j}$ is a hole of $Y_i$ then either it is filled in $\tilde{Y}_i$, or it merges with $ext(\hE_\ep)$, or it is included in a hole of $Y_{i,\ep}$. Thus if $H_{l,\ep}$ are the holes of $\hE_\ep$, then $S_\ep^c\cup\bcup_l H_{l,\ep}$ is connected with the exterior $ext(\hE_\ep)$. Therefore
	\beq\label{eq23}
	St_c(\hE_\ep) \le \cH^1(S_\ep^c) \le \cH^1(S^c) + \sum_{i \in I_\ep} \sum_{j \in J_{i,\ep}}P(T_{i,j}) \le \cH^1(S^c) + \ep^2 N_\ep .
	\eeq
	
	\noindent Putting together \eqref{eq14}, \eqref{eq22}, and \eqref{eq23} we obtain
	\beqs
	\begin{split}
		\limsup_{\ep\to0} P(\hE_\ep) + 2St(\hE_\ep) &\le \limsup_{\ep\to0} P(E)+\ep^3 N_\ep + 2\bigg(\cH^1(S) +  \sum_{i \in I\sm I_ \ep} P(Y_i) +  \ep^3N_ \ep^2 + 2 \ep^3N_ \ep^2\bigg)= \\
		&= P(E)+ 2 St(E),
	\end{split}
	\eeqs
	and
	\beqs
	\begin{split}
		\limsup_{\ep\to0} &P(\hE_\ep) + 2St(\hE_\ep) +2 St_c(\hE_\ep)\le \\ &\le \limsup_{\ep\to0} P(E)+\ep^3 N_\ep + 2\bigg(\cH^1(S) +  \sum_{i \in I\sm I_ \ep} P(Y_i) +  \ep^3N_ \ep^2 + 2 \ep^3N_ \ep^2\bigg)+ 2\bigg(  \cH^1(S^c) + \ep^2 N_\ep  \bigg)=\\
		&= P(E)+2St(E)+2St_c(E).
	\end{split}
	\eeqs
	Taking into account \eqref{eq24}, we see that $\hE_\ep$ satisfies the thesis.
\end{proof}

\textcolor{white}{text}

\section{Application: a liquid drop model with connectedness constraint}

\noindent In the end, we want to discuss an explicit application of the energies $\overline{P_C}$, $\overline{P_S}$. More precisely we point out how such energies used in place of the usual perimeter can give existence of a solution to a minimization problem.\\
\noindent Fix $\al\in(0,2)$ and $m>0$. We consider the following minimization problem
\beq\label{eq25}
	\min\bigg\{ P(E) + \int_{E\times E} \frac{1}{|x-y|^\al}\,dx\,dy \quad\big|\quad E\con\R^2 \mbox{ measurable}, \,\,\,|E|=m \bigg\}.
\eeq
which is sometimes called {\it Gamow's liquid drop model}. This problem, introduced in \cite{Ga} 
in three dimensions and for $\alpha=1$, 
has been studied for instance in \cite{KnMu13} (see also \cite{GoNoRu15,MuNoRu18}), where it is proven that there exist
two threshold values $m_1(\al),m_2(\al)$ such that:\\
i) for all $m \le m_1(\alpha)$, \eqref{eq25} has a solution,\\
ii) for all $m > m_2(\alpha)$, \eqref{eq25} has no solution.

\noindent We will prove now that, substituting $P$ with $\overline{P_C}$ or $\overline{P_S}$ in 
\eqref{eq25}, there exists a solution to the new minimization problem for {\it any} $\al\in(0,2),m>0$. This is clearly a mathematical tool in order to avoid the non-existence phenomenon happening for $m>m_2(\alpha)$, which is essentially due to the lack of compactness of $\R^2$. However the use of $\overline{P_C}$ or $\overline{P_S}$ in place of $P$ can be also seen as a model for charged liquid drops which cannot split. We are not aware of any physical situation of this kind, but material science is always in progress!

\begin{lemma}\label{lem6}
	The map $\displaystyle \E \mapsto \iint_{E\times E} \frac{d x d y}{|x-y|^\alpha}$
	is continuous with respect to the convergence in $L^1_{loc}(\R^2)$.
\end{lemma}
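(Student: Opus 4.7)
The plan is to linearize the bilinear functional $V(F) := \iint_{F\times F}|x-y|^{-\alpha}\,dx\,dy$ and exploit a uniform $L^\infty$ bound on the Riesz-type potential $I_F(x) := \int_F |x-y|^{-\alpha}\,dy$ that depends only on $|F|$. With such a bound in hand, continuity reduces to $L^1$ convergence of the characteristic functions on a fixed ball.

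First, using the elementary identity
\[
\chi_{E_n}(x)\chi_{E_n}(y) - \chi_E(x)\chi_E(y) = (\chi_{E_n}-\chi_E)(x)\chi_{E_n}(y) + \chi_E(x)(\chi_{E_n}-\chi_E)(y)
\]
together with Fubini and the symmetry of the kernel, one decomposes
\[
V(E_n) - V(E) = \int_{\R^2}(\chi_{E_n}-\chi_E)(x)\bigl(I_{E_n}(x)+I_E(x)\bigr)\,dx.
\]

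Second, I would establish the uniform potential bound by splitting the defining integral of $I_F$ at a radius $r>0$: for every measurable $F\con\R^2$ with $|F|<+\infty$,
\[
I_F(x)\le \int_{B_r(x)}\frac{dy}{|x-y|^\alpha}+\frac{|F|}{r^\alpha}=\frac{2\pi}{2-\alpha}\,r^{2-\alpha}+|F|\,r^{-\alpha},
\]
and the choice $r=|F|^{1/2}$ gives $\|I_F\|_{L^\infty(\R^2)}\le C_\alpha\,|F|^{1-\alpha/2}$, uniformly in $x$ and in the shape of $F$; the finiteness of the first summand uses crucially that $\alpha<2$.

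Combining the two previous steps yields the master estimate
\[
|V(E_n)-V(E)|\le C_\alpha\bigl(|E_n|^{1-\alpha/2}+|E|^{1-\alpha/2}\bigr)\,\|\chi_{E_n}-\chi_E\|_{L^1(\R^2)}.
\]
In the intended application to minimizing sequences with $|E_n|=m$ and uniformly bounded $\overline{P_C}(E_n)$ (or $\overline{P_S}(E_n)$), the competitors are uniformly essentially bounded thanks to the diameter estimate of Lemma~\ref{lem3} (combined with Theorem~\ref{thm1} to control the M-connected components of the indecomposable approximators), so local $L^1$ convergence upgrades to genuine $L^1$ convergence, and the right-hand side above tends to zero.

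The main obstacle is controlling the diagonal singularity of the kernel, which is handled by the Riesz potential bound above; a subsidiary subtlety is upgrading $L^1_{loc}$ to $L^1$ convergence, which in this paper's setting is automatic under the uniform diameter bound that the finite connected perimeter provides.
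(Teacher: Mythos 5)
Your proof is correct and takes a genuinely different route from the paper's. The paper works directly with the product sets: it decomposes $(E_n\times E_n)\Delta(E\times E)$ into three slabs, bounds its $\mathcal{L}^4$-measure by $|E_n\Delta E|\bigl(|E_n|+|E|+|E_n\cap E|\bigr)$, and then invokes the fact that $f(x,y)=|x-y|^{-\alpha}\in L^1_{loc}(\R^4)$, so that integrating $f$ over a set of small $\mathcal{L}^4$-measure inside a fixed compact $K\times K$ gives a small quantity (absolute continuity of the integral). You instead linearize the bilinear form via the telescoping identity, reduce to a single $L^1(\R^2)$-integral against $\chi_{E_n}-\chi_E$, and control the weight by the uniform Riesz-potential bound $\|I_F\|_{L^\infty}\le C_\alpha |F|^{1-\alpha/2}$. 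What your approach buys: a clean quantitative Lipschitz-type estimate
\[
|V(E_n)-V(E)|\le C_\alpha\bigl(|E_n|^{1-\alpha/2}+|E|^{1-\alpha/2}\bigr)\|\chi_{E_n}-\chi_E\|_{L^1(\R^2)},
\]
which depends only on the measures $|E_n|,|E|$ and not on any a priori confinement of the sets to a compact set, and which exposes exactly where $\alpha<2$ is used (convergence of $\int_{B_r}|y|^{-\alpha}\,dy$). The paper's argument is shorter to state but requires a fixed compact $K$ containing all the sets, and as written the displayed bound $\|f\|_{L^1(K\times K)}\,\mathcal{L}^4(\cdots)$ is not dimensionally coherent and should really be read as an invocation of absolute continuity. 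Both arguments share the same caveat, which you correctly flag: the lemma cannot hold for unrestricted $L^1_{loc}$ convergence (translating a fixed ball to infinity is a counterexample), so one must upgrade to $L^1$ convergence, which is automatic in the application by the uniform diameter bound from Lemma~\ref{lem3} together with the fixed mass constraint $|E_n|=m$.
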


\begin{proof}
	The proof immediately follows from the following observations.\\
	i) The function $f(x,y)=\frac{1}{|x-y|^\al}$ belongs to $L^1_{loc}(\R^4)$.\\
	ii) For any $A,B\con\R^2$ it holds that
		\[ (A\times A) \Delta (B\times B) = \Big( (A\smallsetminus B)\times A \Big) \cup
		\Big( (A\cap B)\times (A\Delta B) \Big) \cup \Big( (B\smallsetminus A)\times B \Big) .\]
	iii) By ii), if $E_n,E\con\R^2$ then $\cL^4((E_n\times E_n)\Delta (E\times E)) \le |E_n\De E|\big(|E_n|+|E|+|E_n\cap E|\big)$.\\
	iv) We can estimate
		\[ \left| \iint_{E_n\times E_n} \frac{d x d y}{|x-y|^\alpha}
		- \iint_{E\times E} \frac{d x d y}{|x-y|^\alpha} \right| \leqslant ||f||_{L^1(K\times K)}
		\cL^4\big((E_n\times E_n)\Delta (E\times E)\big) ,\]
		for any $E_n,E\con K\con \R^2$ with $K$ compact.
\end{proof}

\begin{thm}
	For all $\alpha \in (0,2)$ and all $m>0$, the minimization problems
	\beq\label{eq26}
		\min\bigg\{ \overline{P_C}(E) + \int_{E\times E} \frac{1}{|x-y|^\al}\,dx\,dy \quad\big|\quad E\con\R^2 \mbox{ measurable}, \,\,\,|E|=m \bigg\},
	\eeq
	\beq\label{eq27}
		\min\bigg\{ \overline{P_S}(E) + \int_{E\times E} \frac{1}{|x-y|^\al}\,dx\,dy \quad\big|\quad E\con\R^2 \mbox{ measurable}, \,\,\,|E|=m \bigg\}
	\eeq
	admit solutions.
\end{thm}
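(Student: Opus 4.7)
The plan is to apply the direct method of the calculus of variations: the indecomposability/simplicity enforced by $\overline{P_C}$ and $\overline{P_S}$ supplies exactly the compactness needed to prevent mass from escaping to infinity, which is the only obstruction that would otherwise destroy existence. Write $I(E) := \iint_{E\times E}|x-y|^{-\alpha}\,dx\,dy$. Both infima in \eqref{eq26}--\eqref{eq27} are finite, since a disk of area $m$ is smooth and simply connected with $St=St_c=0$, so Theorem \ref{thmmain} provides a competitor with finite energy for every $\alpha\in(0,2)$.

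First I would take a minimizing sequence $E_n$ for \eqref{eq26} with $|E_n|=m$. By the very definition of $\overline{P_C}$, together with the $L^1_{loc}$-continuity of $I$ furnished by Lemma \ref{lem6}, for each $n$ there is a sequence of indecomposable sets converging to $E_n$ in $L^1$ and realising simultaneously the limits $P\to\overline{P_C}(E_n)$ and $I\to I(E_n)$. A diagonal extraction then produces indecomposable sets $F_n$ with $F_n\to E_n$ in $L^1$ (so $|F_n|\to m$) and $P(F_n)+I(F_n)=\overline{P_C}(E_n)+I(E_n)+o(1)$. To restore the mass constraint, I rescale: set $\hat F_n := \lambda_n F_n$ with $\lambda_n := (m/|F_n|)^{1/2}\to 1$, which yields $|\hat F_n|=m$, $P(\hat F_n)=\lambda_n P(F_n)$, and $I(\hat F_n)=\lambda_n^{4-\alpha}I(F_n)$; both perturbations are negligible and $\hat F_n$ remains indecomposable because indecomposability is invariant under homotheties.

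Now Lemma \ref{lem3} gives $\diam \hat F_n^1 \le P(\hat F_n)/2 \le C$ along this bounded-energy sequence. Translating each $\hat F_n$ (which leaves $P$ and $I$ invariant) I may assume $\hat F_n \con B_R$ for a common ball $B_R$. Standard $BV$ compactness then extracts a subsequence with $\chi_{\hat F_n}\to\chi_E$ in $L^1$, and $|E|=m$. The functional $\overline{P_C}$ is $L^1$-lower semicontinuous by construction, and $\overline{P_C}(\hat F_n)\le P_C(\hat F_n)=P(\hat F_n)$ since $\hat F_n$ is indecomposable; combined with Lemma \ref{lem6}, this yields $\overline{P_C}(E)+I(E)\le\liminf_n\bigl(P(\hat F_n)+I(\hat F_n)\bigr)$, which equals the infimum. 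Hence $E$ is a minimizer of \eqref{eq26}. The argument for \eqref{eq27} is identical with ``indecomposable'' everywhere replaced by ``simple'': simple sets are indecomposable, so Lemma \ref{lem3} still bounds the diameter, while rescalings and translations preserve simplicity.

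The main and essentially only obstacle is the compactness step: without a uniform diameter bound a minimizing sequence could split into pieces drifting apart, which is precisely the mechanism responsible for the nonexistence of minimizers for $m>m_2(\alpha)$ in the classical Gamow model. Here Lemma \ref{lem3} rules it out directly, and that is exactly why replacing $P$ by $\overline{P_C}$ or $\overline{P_S}$ restores existence for every $\alpha\in(0,2)$ and every $m>0$.
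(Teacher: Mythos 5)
Your proposal is correct and follows essentially the same approach as the paper: pass to indecomposable (resp.\ simple) approximants to get a uniform diameter bound from Lemma \ref{lem3}, translate into a common ball, apply $BV$ compactness, and conclude by lower semicontinuity of $\overline{P_C}$ (resp.\ $\overline{P_S}$) together with the $L^1$-continuity of the nonlocal term from Lemma \ref{lem6}. One small difference: the paper applies the diameter bound directly to the minimizing sequence $E_n$ after bounding $\overline{P_C}(E_n)$, leaving implicit the transfer from the indecomposable approximants $E_{n,k}$ (since $E_n$ itself need not be indecomposable), whereas you make this step explicit by replacing $E_n$ with an indecomposable $F_n$ close to it before invoking Lemma \ref{lem3}; this is slightly cleaner. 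Your rescaling to restore $|\hat F_n|=m$ exactly is harmless but unnecessary — since $|F_n|\to m$ and $F_n\to E$ in $L^1$, the limit already has $|E|=m$, which is all the constraint requires.
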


\begin{proof}
Fix $\al,m$ and define
\[
 \cF_C(E) = \overline{P_C}(E) + \iint_{E\times E} \frac{d x d y}{|x-y|^\alpha} . 
\]
Let $(E_n)$ be a minimizing sequence for the problem \eqref{eq26}, so that in particular $|E_n|=m<+\infty$. Then $\overline{P_C}(E_n)<+\infty$ and there is a sequence of indecomposable sets $E_{n,k}\xrightarrow[k]{}E_n$ in $L^1$ such that $\lim_k P(E_{n,k})=\overline{P_C}(E_n)$. Thus, by lower semicontinuity of the perimeter, one has $P(E_n)\le\overline{P_C}(E_n)\le \sup_n \cF_C(E_n)<+\infty$. Also by Lemma \ref{lem3} we have that
\[
 2\diam(E^1_n) \leqslant \overline{P_C}(E_n) \leqslant \sup_n \cF_C(E_n) < +\infty .
\]
Up to a translation, we may assume that $0 \in E^1_n$ and then $E_n$ is uniformly essentially bounded. Then, by compactness of $BV$ functions, there exists a limit set $E$ (up to a subsequence) with respect to $L^1$ convergence. In particular $|E|=m$ is a competitor for problem \eqref{eq26}. As $\overline{P_C}$ is lower semicontinuous and $f$ is continuous by Lemma \ref{lem6}, we have that $\cF_C$ is lower semicontinuous and then $\inf \cF_C = \cF_C(E)$, and there exists a minimizer of problem \eqref{eq26}.\\
A completely analogous proof also works in the case of Problem \eqref{eq27}.
\end{proof}

\section*{Acknowledgements}
Simon Masnou and Fran\c{c}ois Dayrens acknowledge support from the French National Research Agency (ANR) research grants MIRIAM (ANR-14-CE27-0019) and GEOMETRYA (ANR-12-BS01-0014), and from the LABEX MILYON (ANR-10-LABX-0070) of Universit\'e de Lyon, within the program "Investissements d'Avenir" (ANR-11-IDEX-0007). 

Matteo Novaga and Marco Pozzetta acknowledge support from the INdAM-GNAMPA Project 2019 Problemi geometrici per strutture singolari.


\end{document}